\numberwithin{equation}{section}
\DeclareMathOperator{\Bog}{Bog}
\DeclareMathOperator{\Div}{div}
\DeclareMathOperator{\diver}{div}
\newcommand{\R}{\mathbb R}
\newcommand{\N}{\mathbb N}
\newcommand{\dd}{\mathrm d}
\newcommand{\dt}{\,\mathrm{d} t}
\renewcommand{\bfB}{B}
\newcommand{\Testzeta}{{\mathscr{F}}^{\Div}_{\eta}}
\newcommand{\bx}{\mathbf{x}}
\newcommand{\by}{\mathbf{y}}
\newcommand{\bz}{\mathbf{z}}
\newcommand{\bu}{\mathbf{v}}
\newcommand{\bv}{\mathbf{v}}
\newcommand{\bn}{\mathbf{n}}
\newcommand{\be}{\mathbf{e}}
\newcommand{\dy}{\, \mathrm{d}\mathbf{y}}
\newcommand{\dx}{\, \mathrm{d} \mathbf{x}}
\newcommand{\divx}{\mathrm{div} }
\newcommand{\nabx}{\nabla }
\newcommand{\naby}{\nabla_{\mathbf{y}}}
\newcommand{\Delx}{\Delta }
\newcommand{\Dely}{\Delta_{\mathbf{y}}}
\newcommand{\ds}{\,\mathrm{d}\sigma}
\newcommand{\Oeta}{\Omega_\eta}
\newtheorem{theorem}{Theorem}[section]
\newtheorem{lemma}[theorem]{Lemma}
\newtheorem{proposition}[theorem]{Proposition}
\newtheorem{corollary}[theorem]{Corollary}
\newtheorem{remark}[theorem]{Remark}
\theoremstyle{definition}
\newtheorem{definition}[theorem]{Definition}
\begin{document}

\title[Serrin for FSI]{Ladyzhenskaya-Prodi-Serrin condition for fluid-structure interaction systems}

\author{Dominic Breit}
\address{Institute of Mathematics, TU Clausthal, Erzstra\ss e 1, 38678 Clausthal-Zellerfeld, Germany}
\email{dominic.breit@tu-clausthal.de}

\author{Prince Romeo Mensah}
\address{Institute of Mathematics, TU Clausthal, Erzstra\ss e 1, 38678 Clausthal-Zellerfeld, Germany}
\email{prince.romeo.mensah@tu-clausthal.de}

\author{Sebastian Schwarzacher}
\address{Department of Mathematical Analysis,
	Faculty of Mathematics and Physics,
	Charles University,
	Sokolovská 83,
	186 75 Praha 8, Czech Republic}
\address{and}
\address{Department of Mathematics, 
	Analysis and Partial Differential Equations, 
	Uppsala University, 
	L\"agerhyddsv\"agen 1,
	752 37 Uppsala, Sweden}
\email{schwarz@karlin.mff.cuni.cz}

\author{Pei Su}
\address{Department of Mathematical Analysis,
	Faculty of Mathematics and Physics,
	Charles University,
	Sokolovská 83,
	186 75 Praha 8, Czech Republic}
\email{peisu@karlin.mff.cuni.cz}


\subjclass[2020]{35B65, 35Q74, 35R37, 76D03, 74F10, 74K25}

\date{\today}


\keywords{Incompressible Navier-Stokes system, Viscoelastic shell equation, Fluid-Structure interaction, Strong solutions, Weak-strong uniqueness, Stability estimates.}

\begin{abstract}

We consider the interaction of a viscous incompressible fluid with a flexible shell in three space dimensions. The fluid is described by the three-dimensional incompressible Navier--Stokes equations in a domain that is changing in  accordance with the motion of the structure. The displacement of the latter evolves along a visco-elastic shell equation. Both are coupled through kinematic boundary conditions and the balance of forces.

We prove a counterpart of the classical Ladyzhenskaya-Prodi-Serrin condition yielding conditional regularity and uniqueness of a solution. 

Our result is a consequence of the following three ingredients which might be of independent interest: {\bf (i)} the existence of local strong solutions, {\bf (ii)} an acceleration estimate (under the Serrin assumption) ultimately controlling the second-order energy norm, and {\bf (iii)} a weak-strong uniqueness theorem.
The first point, and to some extent, the last point were previously known for the case of elastic plates, which means that the relaxed state is flat. We extend these results to the case of visco-elastic shells, which means that more general reference geometries are considered such as cylinders or spheres. The second point, i.e. the acceleration estimate for three-dimensional fluids is new even in the case of plates.
\end{abstract}

\maketitle

\section{Introduction}
When three-dimensional Navier-Stokes equations are considered, only conditional smoothness and uniqueness of weak solutions are known for large data and time. The condition is that the fluid velocity satisfies some integrability \textit{beyond the natural energy estimate} that overcomes a certain scaling, namely\footnote{Here and later, we use $I=(0,T)$ as the time interval. Further notations can be found in the next section.}
\begin{align}
\label{eq:serrin}
\bu\in L^r(I;L^s(\Omega)), \qquad\tfrac{2}{r}+\tfrac{3}{s}= 1, \qquad 2\leq r<\infty. 
\end{align}
The above criterion is known as the
\textit{Ladyzhenskaya-Prodi-Serrin condition}, referring to the works by Prodi \cite{prodi1959} and Serrin \cite{serrin1962,serrin1963} on proving conditional uniqueness as well as that of Ladyzhenskaya \cite{lady} showing conditional regularity of solutions. Summarizing this means, if \eqref{eq:serrin} is satisfied the solution is {\bf a)} smooth, and {\bf b)} unique within all {\em weak-solutions satisfying an energy inequality}. The latter property is often referred to as {\em weak-strong-uniqueness}.

Many authors have since contributed to the generalization of this criterion~\cite{beale1984remark, beirao1995new, chen2006space, escauriaza, kozono2002critical, kozono2004bilinear, kozono2000limiting}. In particular, in recent years, seminal studies related to the borderline case $s=3$ indicate that the condition could potentially be sharp~\cite{ABC22,escauriaza,Sve14,JiaSve15}. As the {\em physical indications of non-uniqueness} are usually (necessarily) present in fluid-structure interaction problems, it seems worthwhile investigating how far the seminal work by Ladyzhenskaya, Prodi, Serrin and many others still holds true in this framework. {\em The aim of this paper is to advance this theory to the framework of elastic deformable shells interacting with the incompressible Navier-Stokes equation.} A second more practical motivation of our study is its potential application for numerical analysis. Indeed, the analysis here in particular shows that strong solutions are attractors, which is a first step towards convergence results. See Remark~\ref{rem:stab} for more details.

In the context of a fluid-structure interaction problem the domain of the fluid varies with time with respect to the evolution of the structure. Hence an estimate for the difference of two solutions cannot be directly obtained even when both solutions are smooth. This is already the case when a single rigid body is moving inside the fluid. For that regime, rather recently, the Ladyzhenskaya-Prodi-Serrin condition has been extended for the motion of a rigid ball immersed in a viscous incompressible fluid ~\cite{chemetov2019weak,maity2023uniqueness, muha2022regularity}. In that context falls also the uniqueness result for weak solutions in two dimensions~\cite{GlaSue15}.

The situation becomes even more dramatic, when flexible materials are considered that change the domain in an asymmetric fashion.

In this work we study {\em curved reference configurations} (see Figure~\ref{fig:2}). The most prominent reference geometries for shells are cylinders, that relate for example to the very relevant application of blood-flow or balls, relating for example to the motion of a balloon. But certainly many more complicated reference geometries may appear in applications. 
In short, we derive the following three novel results to be found in sections three, four and five that might each be of independent interest:
\begin{enumerate}
\item[Section 3] {\bf Local strong solutions.} We show the existence of a smooth solution for short times.
\item[Section 4] {\bf The acceleration estimate.} Here we show that as long as the fluid velocity satisfies \eqref{eq:serrin} and the displacement of the shell stays $C^1$ in space, the solutions satisfies some extra smoothness. This section relates to  the conditional smoothness of solutions {\bf a)}.
\item[Section 5] {\bf Weak-strong uniqueness.} Finally, in this section, it is shown that the constructed smooth solution is unique in the regime of weak solutions satisfying an energy estimate and possessing a bi-Lipschitz-in-space shell displacement; hence, conditional uniqueness is shown {\bf b)}.
\end{enumerate} 

The only additional assumption for a weak solution to be smooth and unique that we require on the shell displacement is that it is $C^1$ in space. As we will explain below, this is just an instant of regularity more than a weak solution enjoys.

The first point and, to some extent, the last point above were previously known for the case of elastic plates, which means that the relaxed state is flat. 
The latter one is also the first weak-strong uniqueness result in the context of fluid-structure interaction with flexible structure~\cite{schwarzacher2022weak}. The proof there relies heavily on the fact that the reference configuration is flat.
The second point above, the acceleration estimate for three-dimensional fluids, is new even in the case of plates.

\subsection{Analysis of fluid-structure interactions}
The results presented here strongly connect to previous works on fluid-structure interactions involving elastic structures interacting with an unsteady three-dimensional viscous incompressible fluid. Most results are on the existence theory. We refer to~\cite{sunny} for an overview of the setting considered in this paper and to~\cite{kaltenbacher} for various subjects in fluid-solid interactions. We may broadly classify these body of work into the construction of strong solutions and weak solutions for a viscous fluid interacting with an elastic structure.

For weak solutions,  a semi-group approach is used in \cite{barbu2007existence} in the construction of global-in-time weak solutions  for the interaction between a stationary elastic solid  immersed in a  viscous incompressible fluid, where the interaction happens at the boundary of the solid. The same authors then show in \cite{barbu2008smoothness} that for smooth enough data,  the weak solutions constructed in \cite{barbu2007existence} become smooth.
The existence of a weak solution is also shown in  \cite{boulakia2007existence} for a regularized three-dimensional elastic structure immersed in
an incompressible viscous fluid contained in a fixed bounded connected domain. These solutions exist
as long as deformations of the elastic solid are sufficiently small  and no collisions occurs between the structure and the boundary. However, large translations and rotations of the structure are accounted for. 
The authors in \cite{chambolle2005existence} used a Galerkin method to show the existence of a weak solution to the  three-dimensional  Navier--Stokes equations
coupled with a two-dimensional elastic plate model that is modified to include viscous effects. This weak solution exists as long as  the structure does not touch the fixed part of the fluid boundary. The viscous effects incorporated in the plate model is then removed in \cite{grandmont2008existence} by passing to the limit as the coefficient modelling the viscoelasticity tends to zero.
  The seminal work \cite{coutand2005motion} explores the motion of the linear Kirchhoff elastic solid material
 inside an incompressible viscous fluid. A topological fixed-point argument is used to construct a local-in-time weak solution which is then shown to be regular and unique. 
The authors in \cite{LeRu} study the interaction of an incompressible Newtonian fluid with a linearly elastic Koiter shell. Here, the fluid's boundary is described by the mid-section of the shell and the authors show the existence of weak solutions, without self-intersections of the shell, using an Aubin--Lions type argument. Eventually, an existence result for the fully nonlinear Koiter shell model has been proved in \cite{MuSc}.

When it comes to strong solutions, short time existence and uniqueness of solutions in Sobolev spaces are studied in \cite{cheng2007navier, CS} for  a viscous incompressible fluid interacting with a nonlinear thin elastic  shell. The shell equation for the former \cite{cheng2007navier} is modelled by the nonlinear Saint-Venant-Kirchhoff constitutive law, whereas that of the latter \cite{CS}  is modelled by the nonlinear Koiter shell model.  In \cite{coutand2006interaction}, however, the authors prove the existence of a unique local strong solution, without restriction on the size of the data, when the elastic structure is now governed by quasilinear elastodynamics. 
In \cite{ignatova2014well}, the elastic structure is modelled by a damped wave equation  with  additional boundary stabilization terms. For sufficiently small initial data, subject to said boundary stabilization terms,
global-in-time existence of strong solutions and exponential decay of the solutions are shown. 
The free boundary fluid-structure interaction problem consisting of a Navier--Stokes equation and a wave equation defined in two different but adjacent domains is studied in \cite{kukavica2012solutions}. A local strong solution is constructed under suitable compatibility conditions for the data.
Another local-in-time strong existence result is \cite{DRR} where the viscous Newtonian fluid is now interacting with an elastic structure modelled by a nonlinear damped shell equation.
Finally, a local strong solution is constructed for the motion of a linearly elastic Lam\'e solid moving in a viscous fluid in \cite{raymond2014fluid}. For the problem \eqref{1}--\eqref{interfaceCond} below, the only available local existence results deal with the case of a flat geometry, see \cite{CS} and \cite{DRR}, while the existence of global strong solutions in 2D is proved in \cite{GraHil} for flat geometry and in \cite{Br} for linear elastic shells in general geometries. A corresponding result for the 3D case is not yet known.

\subsection{The fluid-structure interaction problem}
We are interested in the interaction of an incompressible fluid with a flexible shell where the shell  reacts to the surface forces induced by the fluid and deforms the spatial reference domain $\Omega \subset \mathbb{R}^3$ to $\Omega_{\eta(t)}$ with respect to a coordinate transform $\bfvarphi_{\eta(t)}$ (see Figure \ref{fig:2} for the typical situation). The deformed domain $\Omega_{\eta}$ is defined in Subsection~\ref{ssec:geom} in a precise way. 
  We assume that the shell is visco-elastic. This means that besides the fluid forces, it is driven by its {\em elastic} properties and its {\em viscosity}. The reference model here is the {\em linearized Koiter shell model}, but also linearized versions of von Karman shells or pure bending shells are models that can be treated by the methods here. Following~\cite{LeRu,CanMuh13}, we find that the elastic part of the equation for the solid becomes $\alpha\Dely^2\eta+B \eta $, where $B$ is a linear second-order differential operator. Similarly, the part related to the viscosity of the shell becomes $\gamma\Dely^2\partial_t\eta + B'\partial_t\eta $, where $B'$ is another linear second-order differential operator. To simplify reading, we  take a form of the equation that contains only parts of the contributions of elasticity and viscosity, which are essential for the analysis to be performed. In particular, we reduce the elastic part to $\alpha\Dely^2\eta$ and the viscous part to $-\gamma\Dely\partial_t\eta$. We observe that the reduction is with no loss of generality, which certainly would not be the case if {\em non-linear} Koiter shell models were considered as in~\cite{breit2021incompressible,CanMuh13,MuSc}. 

 Accordingly, the shell function $\eta:(t, \by)\in I \times \omega \mapsto   \eta(t,\by)\in \mathbb{R}$ with $I=(0,T)$ for some $T>0$ solves
 \begin{equation}\label{1}
\left\{\begin{aligned}
& \varrho_s\partial_t^2\eta -\gamma\partial_t\Dely \eta + \alpha\Dely^2\eta=g-\bn^\intercal\bm{\tau}\circ\bm{\varphi}_\eta\bn_\eta
 \vert \mathrm{det}(\naby \bm{\varphi}_{\eta})\vert
&\text{ for all }  (t,\by)\in I\times\omega
 ,\\
&\eta(0,\by)=\eta_0(\by), \quad (\partial_t\eta)(0, \by)=\eta_*(\by)
&\text{ for all } \by\in\omega.
 	\end{aligned}\right.
 \end{equation}
Here, $\omega \subset \mathbb{R}^2$ is such that there is $\bfvarphi_\eta :\omega\to \partial \Omega_\eta$ that parametrizes  the boundary of the reference domain $\Omega$. The parameters $\varrho_s,\gamma$ and $\alpha$ are positive constants and the function $g:(t, \by)\in I \times \omega \mapsto  g(t,\by)\in   \mathbb{R}$ is a given forcing term. The vectors $\bn$ and $\bfn_\eta$ are the normal vectors  of the reference boundary and of the deformed boundary, respectively, whereas $\bftau$ denotes the Cauchy stress of the fluid given by Newton's rheological law, that is
$\bftau=\mu\big(\nabx\bu+(\nabx\bu)^\intercal\big)-\pi\mathbb I_{3\times 3}$. The positive constant $\mu$ represents the viscosity coefficient. Also, $\mathbf{v}:(t, \mathbf{x})\in I \times \Oeta \mapsto  \mathbf{v}(t, \mathbf{x}) \in \mathbb{R}^3$, the velocity field and $\pi:(t, \mathbf{x})\in I \times \Oeta \mapsto  \pi(t, \mathbf{x}) \in \mathbb{R}$, the pressure function are the unknown functions for the fluid whose motion is governed by the Navier--Stokes equations
 \begin{equation}\label{2}
\left\{\begin{aligned}
 &\varrho_f\big(\partial_t \bu  + (\mathbf{v}\cdot \nabx)\mathbf{v} \big)
 = 
 \mu\Delx \bu -\nabx\pi+ \bff &\text{ for all }(t,\bx)\in I\times\Omega_\eta,\\
 &\Div \bu=0&\text{ for all }(t,\bx)\in I \times\Omega_\eta,\\
 &\bu(0,\bx)=\bu_0(\bx) &\text{ for all } \bx\in \Omega_{\eta_0},
 \end{aligned}\right.
 \end{equation}
where $\varrho_f$ is a positive constant representing the density of the fluid and the function $\bff:(t, \mathbf{x})\in I \times \Oeta \mapsto  \bff(t, \mathbf{x}) \in \mathbb{R}^3$ is a given volume force. The equations \eqref{1} and \eqref{2} are coupled through the kinematic boundary condition
\begin{align}
\label{interfaceCond}
\bu\circ \bfvarphi_\eta=\partial_t\eta\bfn \quad\text{ for all } (t,\by)\in I\times \omega.
\end{align}

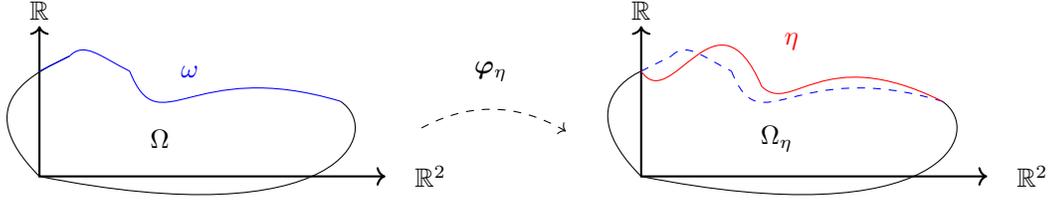
\begin{figure}
\begin{center}
\begin{tikzpicture}[scale=2]
  \begin{scope}     \draw [thick, <->] (0.5,0.5) -- (0.5,-0.5) -- (2.8,-0.5);
        \node at (0.5,0.6) {$\R$};
        \node at (1.3,-0.25) {$\Omega$};
          \node [blue] at (1.5,0.2) {$\omega$};
       \node at (3.1,-0.5) {$\R^2$};
       \draw (0.5,-0.5) .. controls (0,0) and (0.5,0.2) .. (0.7,0.3);
     \draw [blue] (0.5,0.2) .. controls (0.7,0.3) and (0.7,0.3) .. (0.7,0.3);
        \draw [blue] (0.7,0.3) .. controls (0.8,0.4) and (0.9,0.3) .. (1.1,0.2);
       \draw [blue] (1.1,0.2) .. controls (1.3,-0.3) and (1.5,0.3) .. (2.5,0);
       \draw (2.5,0) .. controls (2.8,-0.2) and (2.5,-0.9) .. (0.5,-0.5);
                   \node at (3.5,0.2) {$\bfvarphi_\eta$};
        \draw [thick, <->] (4.5,0.5) -- (4.5,-0.5) -- (6.8,-0.5);
        \node at (4.5,0.6) {$\R$};
       \node at (7.1,-0.5) {$\R^2$};
                     \draw (4.5,-0.5) .. controls (4,0) and (4.5,0.2) .. (4.5,0.2);
       \draw (6.5,0) .. controls (6.8,-0.2) and (6.5,-0.9) .. (4.5,-0.5);
        \draw [blue,dashed] (4.5,0.2) .. controls (4.7,0.3) and (4.7,0.3) .. (4.7,0.3);
        \draw [blue,dashed] (4.7,0.3) .. controls (4.8,0.4) and (4.9,0.3) .. (5.1,0.2);
       \draw [blue,dashed] (5.1,0.2) .. controls (5.3,-0.3) and (5.5,0.3) .. (6.5,0);
                \draw [red] (4.5,0.2) .. controls (4.7,-0.1) and (5,0.8) .. (5.3,0.1);
                \draw [red] (5.3,0.1) .. controls (5.5,-0.1) and  (5.7,0.4) .. (6.5,0);
                        \node at (5.4,-0.25) {$\Omega_\eta$};
                         \node [red] at (5.5,0.4) {$\eta$};
          \draw [<-,dashed] (4,-0.2) to [out=150,in=30] (3,-0.2);
  \end{scope}
\end{tikzpicture}
\caption{Domain transformation in the general set-up in 3D.}\label{fig:2}
\end{center}
\end{figure}

\subsection{Main result}
The main motivation for the present work is to prove an analog of the results by Serrin, Prodi and Ladyzhenskaya for the fluid-structure interaction problem \eqref{1}--\eqref{interfaceCond}.
The here presented result is a summary of Theorem \ref{thm:main}, where the statement can be found in its full extent. 

A weak solutions $(\eta,\bu)$ to \eqref{1}--\eqref{interfaceCond} can be constructed to satisfy the energy inequality and thus
\begin{align}
\sup_{I}\|\partial_t\eta\|_{L^2_\by}^2+\sup_{I}\|\Dely\eta\|_{L^2_\by}^2+\int_{I}\|\partial_t\naby\eta\|_{L^2_\by}^2\dt<\infty.\label{eq:apriorieta0}
\end{align}
We speak about a strong solution if all quantities in \eqref{1} and \eqref{2} are $L^2$-functions in space-time. The precise definitions can be found in Definitions \ref{def:weakSolution} and \ref{def:strongSolution}. 
\begin{theorem}[Shells]\label{thm:mainsimple}
Let $(\bu,\eta)$ be a weak solution to \eqref{1}--\eqref{interfaceCond}. Suppose that we
have
\begin{align}\label{eq:regu'0}
\bu&\in L^r(I;L^s(\Omega_\eta)),\quad \tfrac{2}{r}+\tfrac{3}{s}\leq1,\\
\eta&\in L^\infty(I;C^{1}(\omega)).\label{eq:regeta''0}
\end{align}
Then $(\bu,\eta)$ is a strong solution to \eqref{1}--\eqref{interfaceCond}.
Moreover, $(\bu,\eta)$  is unique in the class of weak solutions satisfying the energy inequality and are in $L^\infty(I;C^{0,1}(\omega))$.
\end{theorem}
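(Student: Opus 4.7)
The plan is to combine the three ingredients announced in the introduction in a continuation-and-identification argument. Starting from the initial data $(\bu_0,\eta_0,\eta_*)$ of the given weak solution (which inherit sufficient regularity from \eqref{eq:apriorieta0} together with the Serrin assumption \eqref{eq:regu'0}), I would first invoke the local strong existence result of Section~3 to produce a strong solution $(\tilde\bu,\tilde\eta)$ on some maximal interval $[0,T^*)$ with $T^*>0$.

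Second, on any sub-interval $[0,T']\subset[0,T^*)$ the strong solution $(\tilde\bu,\tilde\eta)$ clearly has Lipschitz-in-space displacement, so the weak–strong uniqueness theorem of Section~5 applies and forces the given weak solution $(\bu,\eta)$ (which satisfies the energy inequality and, by \eqref{eq:regeta''0}, is in $L^\infty(I;C^{0,1}(\omega))$) to coincide with $(\tilde\bu,\tilde\eta)$ on $[0,T']$. Thus the Serrin integrability \eqref{eq:regu'0} and the spatial $C^1$-bound \eqref{eq:regeta''0} are transported from the weak solution to the strong one up to $T^*$.

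Third, and this is the crux, I would show that the maximal existence time satisfies $T^*=T$. Suppose to the contrary that $T^*<T$. Because $(\tilde\bu,\tilde\eta)$ obeys \eqref{eq:regu'0}--\eqref{eq:regeta''0} on $[0,T^*)$, the acceleration estimate of Section~4 produces uniform control of the second-order energy (in particular, time-integrated bounds on $\partial_t\tilde\bu$, $\nabla^2\tilde\bu$, $\nabla\tilde\pi$, and on $\partial_t^2\tilde\eta$, $\Delta_\by^2\tilde\eta$, $\partial_t\Delta_\by\tilde\eta$) up to $T^*$. This furnishes traces $(\tilde\bu(T^*-),\tilde\eta(T^*-),\partial_t\tilde\eta(T^*-))$ of the same regularity as the initial data assumed in Section~3. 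Reapplying the local existence result at time $T^*$ extends the strong solution past $T^*$, contradicting the maximality and hence yielding $T^*=T$. Consequently $(\bu,\eta)$ itself is a strong solution on $I$.

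Finally, the uniqueness statement follows by applying the weak–strong uniqueness result of Section~5 once more: any competing weak solution lying in the Leray class and with shell displacement in $L^\infty(I;C^{0,1}(\omega))$ must coincide with the strong solution $(\bu,\eta)$ just obtained. The main obstacle I expect is the continuation step: one must verify that the bounds provided by the acceleration estimate are quantitatively strong enough (in particular, include the right spatial regularity for the shell and a non-degenerate geometry of $\Omega_{\tilde\eta(T^*-)}$) to legitimately restart the local existence theorem of Section~3 in the curved reference configuration. Showing this compatibility—rather than the energy estimate itself—is where the general-geometry assumption on $\bfvarphi_\eta$ and the viscous regularization $-\gamma\partial_t\Delta_\by\eta$ in \eqref{1} are expected to play a decisive role.
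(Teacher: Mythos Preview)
Your continuation-and-identification argument is exactly the paper's strategy: invoke local strong existence (Theorem~\ref{thm:fluidStructureWithoutFK}), identify with the weak solution via weak--strong uniqueness (Theorem~\ref{thm:weakstrong}) so that the Serrin bound transfers to the strong solution, feed this into the acceleration estimate (Theorem~\ref{prop2}) to get uniform second-order control, and restart; the paper phrases the last step as an iteration $T^\ast\to 2T^\ast\to\cdots$ rather than a maximal-interval contradiction, but this is purely cosmetic.

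One slip: your parenthetical claim that the initial data ``inherit sufficient regularity from \eqref{eq:apriorieta0} together with the Serrin assumption'' does not hold---the energy bound only gives $\eta_0\in W^{2,2}(\omega)$, $\eta_*\in L^2(\omega)$, $\bu_0\in L^2(\Omega_{\eta_0})$, which is one derivative short of what Theorem~\ref{thm:fluidStructureWithoutFK} requires. The paper resolves this by stating Theorem~\ref{thm:mainsimple} as a simplified version of Theorem~\ref{thm:main}, where the stronger data assumption \eqref{datasetImproved} is imposed explicitly; you should do the same rather than try to derive it. Your closing remark about non-degeneracy of the geometry at the restart time is well placed: the paper does not eliminate this obstruction either and builds it into the conclusion of Theorem~\ref{thm:main} as the only possible cause of finite-time breakdown.
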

We emphasis that the only additional assumption for the structure is given by $L^\infty(I;C^{1}(\omega))$ for conditional smoothness or $L^\infty(I;C^{0,1}(\omega))$ for the uniqueness class of the strong solution. Note that this is only an instant of regularity more than a weak solution enjoys as it belongs to $L^\infty(I;W^{2,2}(\omega))$, see \eqref{eq:apriorieta0}. Further note that the spaces $W^{2,2}(\omega),C^{0,1}(\omega)$ and $C^1(\omega)$ even have the same index in 2D, but the embedding $W^{2,2}(\omega)\hookrightarrow C^{0,1}(\omega)$ just fails. This extra assumption is, however, essential for the approach presented here. Hence it remains {\em a challenging open problem of whether the uniqueness regime (of a strong solution) can be extended to all energy preserving weak-solutions in the case of a curved reference geometry}. In contrast, in case the reference geometry is flat (the plate case), a direct approach for weak-strong uniqueness is available for which the $C^{0,1}(\omega)$ assumption is not necessary~\cite{schwarzacher2022weak}. As the theory presented here is in particular valid for plates, we have the following corollary.
\begin{corollary}[Plates]\label{cor:mainsimple}
Let $(\bu,\eta)$ be a weak solution of \eqref{1}--\eqref{interfaceCond} with {\em flat reference geometry}. Suppose that we
have
\begin{align*}
\bu&\in L^r(I;L^s(\Omega_\eta)),\quad \tfrac{2}{r}+\tfrac{3}{s}\leq1,\\
\eta&\in L^\infty(I;C^{1}(\omega)).
\end{align*}
Then $(\bu,\eta)$ is a strong solution of \eqref{1}--\eqref{interfaceCond}.
Moreover, $(\bu,\eta)$ is unique in the class of weak solutions satisfying an energy inequality.\footnote{Strictly speaking the weak-strong uniqueness result in ~\cite{schwarzacher2022weak} considers elastic plates. However, the presence of dissipation in the structure equation does not change the argument at all.}
\end{corollary}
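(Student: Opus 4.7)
The plan is to obtain the corollary by combining two ingredients that are essentially already in place. For the first assertion, I would invoke the conditional strong-solvability part of Theorem~\ref{thm:main}, noting that a flat reference configuration is simply a special case of the curved geometry treated there: the acceleration estimate of Section 4 applies verbatim and, fed by the hypothesis $\tfrac{2}{r}+\tfrac{3}{s}\leq 1$ together with $\eta\in L^\infty(I;C^{1}(\omega))$, promotes the weak solution to a strong one with the required second-order energy control.

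For the uniqueness assertion the only substantive gap is that Theorem~\ref{thm:mainsimple} would force us to restrict to competitors lying in $L^\infty(I;C^{0,1}(\omega))$, whereas here we want the conclusion inside the full energy class. I would not try to patch the general-geometry argument of Section 5 to achieve this. Instead, I would invoke the direct plate-case weak-strong uniqueness of~\cite{schwarzacher2022weak}. In the flat setting the deformed fluid domain is a graph over a fixed base, so the pull-back to a common reference configuration is a global bi-Lipschitz map whose Lipschitz constant is controlled purely by $\|\naby\eta\|_{L^\infty(I\times\omega)}$ on the \emph{strong} side; the competing weak solution is consequently allowed to live merely in the natural energy class. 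A Gronwall inequality for the relative energy, with the strong solution constructed above playing the role of the reference, then yields uniqueness.

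The only point that needs verification is that the viscous shell term $-\gamma\Dely\partial_t\eta$ (absent in the purely elastic plate of~\cite{schwarzacher2022weak}) does not obstruct the argument. When one forms the relative energy for the shell equation this term contributes the sign-favourable piece $\gamma\int_I\|\naby\partial_t(\eta_1-\eta_2)\|_{L^2(\omega)}^2\dt$ on the dissipation side; rather than competing with Gronwall closure, it supports it, which is precisely the content of the footnote. The main---and really only---technical obstacle is therefore a bookkeeping check that the solenoidal extensions, pressure reconstructions, and test-function constructions of~\cite{schwarzacher2022weak} remain well-defined and continuous given the slightly different regularity class of the viscoelastic plate equation. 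The extra parabolic smoothing in $\eta$ makes this routine.
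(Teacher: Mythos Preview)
Your proposal is correct and matches the paper's approach: the corollary is not given a separate proof but is stated as an immediate consequence of Theorem~\ref{thm:main} (for the regularity part) combined with the flat-geometry weak-strong uniqueness result of~\cite{schwarzacher2022weak} (for uniqueness in the full energy class), with the footnote making exactly your observation that the added viscoelastic dissipation only helps the Gronwall closure.
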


\begin{remark}[Stability and convergence of numerical schemes]
\label{rem:stab}
{\rm 
Related to the weak-strong uniqueness results is a stability estimate (see Section \ref{sec:weakStrong}). It comes naturally, as the difference of two solutions is estimated. Hence a further result of this work is that any solution satisfying \eqref{eq:regu'0} and \eqref{eq:regeta''0} are actually attractors in the respective uniqueness class.

Stability are of particular importance for numeric applications. Indeed they form the first step in order to show that the difference between a discrete solution and the continuous solution decreases (with a rate), provided that the continuous solution is unique, as was shown for plates in 2D~\cite{SchSheTum23,schwarzacher2022weak}.
}
\end{remark}


The formal proof for the regularity of Theorem \ref{thm:mainsimple} consists in proving an acceleration estimate which combines and extends the results in~\cite{Br, GraHil}. In
order to appreciate the moving boundary, the correct test function for the momentum equation is -- roughly speaking -- the material derivative $\partial_t\bv+\bfv\cdot\nabla\bv$ combined with the test-function $\partial_t^2\eta$ for the structure equation. A key tool is eventually to estimate $\nabla^2\bv$ (as well as $\nabla\pi$) by means of $\partial_t\bv+\bfv\cdot\nabla\bv$. This can be done by means of a steady Stokes theory for irregular domains proved in \cite{Br} (applied to the domain $\Omega_{\eta(t)}$ for fixed $t$). It strongly requires a boundary with a small local Lipschitz constant and thus \eqref{eq:regeta''0} is essentially needed here. Otherwise, such a regularity estimate is not known and probably not even expected.
Furthermore, in order to avoid the appearance of the pressure function, an extension operator from $\omega$ to $\Omega_\eta$ has been used in \cite{GraHil} which is at the same time solenoidal and satisfies a homogeneous Neumann-type boundary condition. The construction of the latter is only possible for a flat reference domain. Hence, we introduce the pressure function and work with a more common extension operator which does not preserve solenoidability.
The advantage of the latter is that it has the usual regularization property (it ``gains'' the differentiability which is lost by the trace theorem, see Section \ref{sec:ext}) different from the solenoidal extensions used in \cite{GraHil}, \cite{LeRu} and \cite{MuSc}.
A major difference between the 2D and 3D cases is that one has to use the full strength of this operator to compensate for the worsening embeddings.

In order to make this argument rigorous, we work with a strong solution to \eqref{1}--\eqref{interfaceCond}.
Thus, we prove the existence of a local strong solution in Theorem
\ref{thm:fluidStructureWithoutFK}. 
With  a strong solution at hand, we can justify the estimates mentioned above. To close the argument (see the proof of Theorem \ref{thm:main}), we have to compare weak and strong solutions by means of a weak-strong uniqueness result.
The difficulty of the latter is that one needs to compare two velocity fields which are a priori defined on different (time-changing) domains. Nevertheless,
such a result
has been established very recently in \cite{schwarzacher2022weak} for linear elastic plates. The key idea is to transform the strong solution into the domain of the weak solution and then estimate their difference. When doing so, the strong solution loses its solenoidal character which must be corrected to avoid the appearance of the pressure function in the weak formulation. In the case of a flat geometry as in \cite{schwarzacher2022weak} this can be done by an explicit construction, but our situation is more complicated. We thus work with a Bogovkij-operator for moving domains~\cite{KamSchSpe20}. 
Its properties crucially hinge on the spatial Lipschitz continuity of the moving boundary and thus require
that the weak solution is $L^\infty(C^{0,1})$. For details on the criticality of Lipschitz regularity see~\cite{SaaSch21}, where estimates for the Bogovkij-operator in rough time-dependent domains are studied.

\subsection{Organization of the paper}
We introduce in Section \ref{sec:prelim}, some notations, definitions and the functional analytic framework. In particular, we give the definition of the notion of a weak and a strong solution for the system \eqref{1}--\eqref{interfaceCond}. We then construct in Section \ref{sec:loc}, the local strong solution by linearizing the system and employing the Banach fixed-point argument. Section \ref{sec:reg} is devoted to proposing the conditions of Serrin type to obtain the acceleration estimate. In Section \ref{sec:weakStrong}, we focus on showing the weak-strong uniqueness result. Finally, we give in Section \ref{summary} a short summary to formulate the main result by collecting the key elements of the previous sections.

\section{Preliminaries}
\label{sec:prelim}
\subsection{Conventions}
For simplicity, we set all physical constants in \eqref{1}--\eqref{interfaceCond} to 1. The analysis is not affected as long as they are strictly positive.
For two non-negative quantities $f$ and $g$, we write $f\lesssim g$  if there is a $c>0$ such that $f\leq\,c g$. Here $c$ is a generic constant which does not depend on the crucial quantities. If necessary, we specify particular dependencies. We write $f\approx g$ if both $f\lesssim g$ and $g\lesssim f$ hold. In the notation for function spaces (see next subsection), we do not distinguish between scalar- and vector-valued functions. However, vector-valued functions will usually be denoted in bold case.
For simplicity, we supplement \eqref{1} with periodic boundary conditions and identify $\omega$ (which represents the complete boundary of $\Omega$) with  $(0,1)^2$. We consider periodic function spaces for zero-average functions.
It is only a technical matter to consider \eqref{1} on a nontrivial subset of $\partial\Omega$ together with zero boundary conditions for $\eta$ and $\naby\eta$ instead of considering \eqref{1} on $(0,1)^2$, see e.g. \cite{LeRu} or \cite{BrSc} for the corresponding geometrical set-up.
We shorten the time interval $(0,T)$ by $I$.

\subsection{Classical function spaces}
Let $\mathcal O\subset\R^3$ be open.
The function spaces of continuous or $\alpha$-H\"older-continuous functions, $\alpha\in(0,1)$,
 are denoted by $C(\overline{\mathcal O})$ or $C^{0,\alpha}(\overline{\mathcal O})$ respectively, where $\overline{\mathcal O}$ is the closure of $\mathcal O$. Similarly, we write $C^1(\overline{\mathcal O})$ and $C^{1,\alpha}(\overline{\mathcal O})$.
We denote  by $L^p(\mathcal O)$ and $W^{k,p}(\mathcal O)$ for $p\in[1,\infty]$ and $k\in\mathbb N$, the usual Lebesgue and Sobolev spaces over $\mathcal O$. 
For a bounded domain $\mathcal O$,  the notation $(f)_{\mathcal O}:=\dashint_{\mathcal O}f\dx:=\mathcal L^3(\mathcal O)^{-1}\int_{\mathcal O}f\dx$ represents the mean or average value of $f\in L^p(\mathcal O)$.
 We denote by $W^{k,p}_0(\mathcal O)$, the closure of the smooth and compactly supported functions in $W^{k,p}(\mathcal O)$. If $\partial\mathcal O$ is regular enough, this coincides with the functions vanishing $\mathcal H^{2}$ -a.e. on $\partial\mathcal O$. 
 We also denote by $W^{-k,p}(\mathcal O)$ the dual of $W^{k,p}_0(\mathcal O)$.
  Finally, we consider subspaces
$W^{1,p}_{\Div}(\mathcal O)$ and $W^{1,p}_{0,\Div}(\mathcal O)$ of divergence-free vector fields which are defined accordingly. The space $L^p_{\Div}(\mathcal O)$ is defined as the closure of the set of smooth and compactly supported solenoidal functions in $L^p(\mathcal O).$ We will use the shorthand notations $L^p_\bx$ and $W^{k,p}_\bx$ in the case of $3$-dimensional domains (typically spaces defined over $\Omega\subset\R^3$ or $\Omega_\eta\subset\R^3$) and   
$L^p_\by$ and $W^{k,p}_\by$ for $2$- dimensional sets (typcially spaces of periodic functions defined over $\omega\subset\R^{2}$). 
For any pair of separable Banach spaces $(X,\|\cdot\|_X)$ and $(Y,\|\cdot\|_Y)$ with $X\subset Y$, we write $X\hookrightarrow Y$ if $X$ is continuously embedded in $Y$, that is $\Vert\cdot\Vert_Y\lesssim \Vert \cdot\Vert_X$.
Since we only consider functions on $\omega$ with periodic boundary conditions and zero mean values, we have the following equivalences
\begin{align*}
\|\cdot\|_{W^{1,2}_\by}\approx \|\nabla_\by\cdot\|_{L^2_\by},\quad \|\cdot\|_{W^{2,2}_\by}\approx \|\Dely\cdot\|_{L^2_\by},\quad \|\cdot\|_{W^{4,2}_\by}\approx \|\Dely^2\cdot\|_{L^2_\by}.
\end{align*}
For a separable Banach space $(X,\|\cdot\|_X)$, we denote by $L^p(I;X)$, the set of (Bochner-) measurable functions $u:I\rightarrow X$ such that the mapping $t\mapsto \|u(t)\|_{X}$ belongs to $L^p(I)$. 
The set $C(\overline{I};X)$ denotes the space of functions $u:\overline{I}\rightarrow X$ which are continuous with respect to the norm topology on $(X,\|\cdot\|_X)$. For $\alpha\in(0,1]$ we write
$C^{0,\alpha}(\overline{I};X)$ for the space of H\"older-continuous functions with values in $X$. The space $W^{1,p}(I;X)$ consists of those functions from $L^p(I;X)$ for which the distributional time derivative belongs to $L^p(I;X)$ as well. The space $W^{k,p}(I;X)$ is defined accordingly.
We use the shorthand $L^p_tX$ for $L^p(I;X)$. For instance, we write $L^p_tW^{1,p}_\bx$ for $L^p(I;W^{1,p}(\mathcal O))$. Similarly, $W^{k,p}_tX$ stands for $W^{k,p}(I;X)$.

\subsection{Fractional differentiability and Sobolev mulitpliers}
For $p\in[1,\infty)$, the fractional Sobolev space (Sobolev-Slobodeckij space) with differentiability $s>0$ with $s\notin\mathbb N$ will be denoted by $W^{s,p}(\mathcal O)$. For $s>0$, we write $s=\lfloor s\rfloor+\lbrace s\rbrace$ with $\lfloor s\rfloor\in\N_0$ and $\lbrace s\rbrace\in(0,1)$.
 We denote by $W^{s,p}_0(\mathcal O)$, the closure of the smooth and compactly supported functions in $W^{s,p}(\mathcal O)$. For $s>\frac{1}{p}$ this coincides with the functions vanishing $\mathcal H^{n-1}$ -a.e. on $\partial\mathcal O$ provided that $\partial\mathcal O$ is regular enough. We also denote by $W^{-s,p'}(\mathcal O)$, for $s>0$ and $p,p'\in[1,\infty)$, with $\frac{1}{p}+\frac{1}{p'}=1$, the dual of $W^{s,p}_0(\mathcal O)$. Similar to the case of unbroken differentiabilities above, we use the shorthand notations $W^{s,p}_\bx$  and $W^{s,p}_\by$. 
We will denote by $\bfB^s_{p,q}(\R^n)$, the standard Besov spaces on $\R^n$ with differentiability $s>0$, integrability $p\in[1,\infty]$ and fine index $q\in[1,\infty]$. They can be defined (for instance) via Littlewood-Paley decomposition leading to the norm $\|\cdot\|_{\bfB^s_{p,q}(\R^n)}$. 
 We refer to \cite{RuSi} and \cite{Tr,Tr2} for an extensive description. 
For a bounded domain $\mathcal O\subset\R^n$, the Besov spaces $\bfB^s_{p,q}(\mathcal O)$ are defined as the restriction of functions from $\bfB^s_{p,q}(\R^n)$, that is
 \begin{align*}
 \bfB^s_{p,q}(\mathcal O)&:=\{f|_{\mathcal O}:\,f\in \bfB^s_{p,q}(\R^n)\},\\
 \|g\|_{\bfB^s_{p,q}(\mathcal O)}&:=\inf\{ \|f\|_{\bfB^s_{p,q}(\R^n)}:\,f|_{\mathcal O}=g\}.
 \end{align*}
 If $s\notin\mathbb N$ and $p\in(1,\infty)$ we have $\bfB^s_{p,p}(\mathcal O)=W^{s,p}(\mathcal O)$.
 
In accordance with \cite[Chapter 14]{MaSh}, the Sobolev multiplier norm  is given by
\begin{align}\label{eq:SoMo}
\|\varphi\|_{\mathcal M(W^{s,p}(\mathcal O))}:=\sup_{\mathbf{u} :\,\|\mathbf{u}\|_{W^{s-1,p}(\mathcal O)}=1}\|\nabla\varphi\cdot\mathbf{u}\|_{W^{s-1,p}(\mathcal O)},
\end{align}
where $p\in[1,\infty]$ and $s\geq1$.
The space $\mathcal M(W^{s,p}(\mathcal O))$ of Sobolev multipliers is defined as those objects for which the $\mathcal M(W^{s,p}(\mathcal O))$-norm is finite. By mathematical induction with respect to $s$, one can prove for Lipschitz-continuous functions $\varphi$ that membership to $\mathcal M(W^{s,p}(\mathcal O))$,  in the sense of \eqref{eq:SoMo}, implies that
\begin{align}\label{eq:SoMo'}
\sup_{w:\,\|w\|_{W^{s,p}(\mathcal O)}=1}\|\varphi \,w\|_{W^{s,p}(\mathcal O)}<\infty.
\end{align}
The quantity \eqref{eq:SoMo'} also serves as customary definition of the Sobolev multiplier norm in the literature but \eqref{eq:SoMo} is more suitable for our purposes.
Note that in our applications, we always assume that the functions in question are Lipschitz continuous so that the implication above holds true.

Let us finally collect some useful properties of Sobolev multipliers.
By \cite[Corollary 14.6.2]{MaSh} we have
\begin{align}\label{eq:MSa}
\|\phi\|_{\mathcal M(W^{s,p}(\R^{n}))}\lesssim\|\nabla\phi\|_{L^{\infty}(\R^n)},
\end{align}
provided that one of the following conditions holds:
\begin{itemize}
\item $p(s-1)<n$ and $\phi\in \bfB^{s}_{\varrho,p}(\R^n)$ with $\varrho\in\big[\frac{n}{s-1},\infty\big]$;
\item $p(s-1)=n$ and $\phi\in\bfB^{s}_{\varrho,p}(\R^n)$ with $\varrho\in(p,\infty]$.
\end{itemize}
Note that the hidden constant in \eqref{eq:MSa} depends on the $\bfB^{s}_{\varrho,p}(\R^n)$-norm of $\phi$.
By \cite[Corollary 4.3.8]{MaSh}, it holds
\begin{align}\label{eq:MSb}
\|\phi\|_{\mathcal M(W^{s,p}(\R^n))}\approx
\|\nabla\phi\|_{W^{s-1,p}(\R^n)}, 
\end{align}
for $p(s-1)>n$. 
 Finally, we note the following rule about the composition with Sobolev multipliers which is a consequence of \cite[Lemma 9.4.1]{MaSh}. For open sets $\mathcal O_1,\mathcal O_2\subset\R^n$, $u\in W^{s,p}(\mathcal O_2)$ and a Lipschitz continuous function $\bfphi:\mathcal O_1\rightarrow\mathcal O_2$ with Lipschitz continuous inverse and $\bfphi\in \mathcal M(W^{s,p}(\mathcal O_1))$ we have
\begin{align}\label{lem:9.4.1}
\|u\circ\bfphi\|_{W^{s,p}(\mathcal O_1)}\lesssim \|u\|_{W^{s,p}(\mathcal O_2)}
\end{align}
with constant depending on $\bfphi$. Using Lipschitz continuity
of $\bfphi$ and $\bfphi^{-1}$, estimate \eqref{lem:9.4.1} is obvious for $s\in(0,1]$. The general case can be proved by mathematical induction with respect to $s$.

\subsection{Function spaces on variable domains}
\label{ssec:geom}
 The spatial domain $\Omega$ is assumed to be an open bounded subset of $\mathbb{R}^3$ with smooth boundary $\partial\Omega$ and an outer unit normal ${\bfn}$. We assume that
 $\partial\Omega$ can be parametrised by an injective mapping ${\bfvarphi}\in C^k(\omega;\R^3)$ for some sufficiently large $k\in\N$. We suppose for all points $\by=(y_1,y_2)\in \omega$ that the pair of vectors  
$\partial_i {\bfvarphi}(\by)$, $i=1,2,$ are linearly independent.
 For a point $\bx$ in the neighborhood
of $\partial\Omega$, we define the functions $\by$ and $s$ by  
\begin{align*}
 \by(\bx)=\arg\min_{\by\in\omega}|\bx-\bfvarphi(\by)|,\quad s(\bx)=(\bx-\by(\bx))\cdot\bfn(\by(\bx)).
 \end{align*}
Moreover, we define the projection $\bfp(\bx)=\bfvarphi(\by(\bx))$. We define $L>0$ to be the largest number such that $s,\by$ and $\bfp$ are well-defined on $S_L$, where
\begin{align}\label{eq:boundary1}
S_L=\{\bx\in\R^n:\,\mathrm{dist}(\bx,\partial\Omega)<L\}.
\end{align}
Due to the smoothness of $\partial\Omega$ for $L$ small enough we have $\abs{s(\bx)}=\min_{\by\in\omega}|\bx-\bfvarphi(\by)|$ for all $\bx\in S_L$. This implies that $S_L=\{s\bfn(\by)+\by:(s,\by)\in (-L,L)\times \omega\}$.
For a given function $\eta : I \times \omega \rightarrow\R$ we parametrise the deformed boundary by
\begin{align*}
{\bfvarphi}_\eta(t,\by)={\bfvarphi}(\by) + \eta(t,\by){\bfn}(\by), \quad \,\by \in \omega,\,t\in I.
\end{align*}
By possibly decreasing $L$, one easily deduces from this formula that $\Omega_{\eta}$ does not degenerate, that is
\begin{equation}\label{eq:1705}
\begin{aligned}
\partial_1\bfvarphi_\eta\times\partial_2\bfvarphi_\eta(t,\by)\neq0,\quad
 \bfn(\by)\cdot\bfn_{\eta(t)}(\by)&>0,\quad \,\by \in \omega,\,t\in I,
 \end{aligned}
\end{equation}
provided that $\sup_t\|\eta\|_{W^{1,\infty}_{\by}}<L$. Here $\bfn_{\eta(t)}$
is the normal of the domain $\Omega_{\eta(t)}$
 defined through
\begin{equation}\label{eq:2612}
\partial\Omega_{\eta(t)}=\set{{\bfvarphi}(\by) + \eta(t,\by){\bfn}(\by):\by\in \omega}.
\end{equation}
With the abuse of notation we define deformed space-time cylinder $I\times\Omega_\eta=\bigcup_{t\in I}\set{t}\times\Omega_{\eta(t)}\subset\R^{4}$.
The corresponding function spaces for variable domains are defined as follows.
\begin{definition}{(Function spaces)}
For $I=(0,T)$, $T>0$, and $\eta\in C(\overline{I}\times\omega)$ with $\|\eta\|_{L^\infty(I\times\omega)}< L$ we define for $1\leq p,r\leq\infty$
\begin{align*}
L^p(I;L^r(\Omega_\eta))&:=\big\{v\in L^1(I\times\Omega_\eta):\,\,v(t,\cdot)\in L^r(\Omega_{\eta(t)})\,\,\text{for a.e. }t,\,\,\|v(t,\cdot)\|_{L^r(\Omega_{\eta(t)})}\in L^p(I)\big\},\\
L^p(I;W^{1,r}(\Omega_\eta))&:=\big\{v\in L^p(I;L^r(\Omega_\eta)):\,\,\nabla v\in L^p(I;L^r(\Omega_\eta))\big\}.
\end{align*}
\end{definition}
\noindent 
In order to establish a relationship between the 
fixed domain and the time-dependent domain, we introduce the Hanzawa transform $\bm{\Psi}_\eta : \Omega \rightarrow\Omega_\eta$ defined by
\begin{equation}
\label{map}
\bfPsi_\eta(\bx)
=
 \left\{
  \begin{array}{lr}
    \mathbf{p}(\bx)+\big(s(\bx)+\eta(\by(\bx))\phi(s(\bx))\big)\bn(\by(\bx)) &\text{if dist}(\bx,\partial\Omega)<L,\\
    \bx &\text{elsewhere}.
  \end{array}
\right.
\end{equation}
for any $\eta:\omega\rightarrow (-L,L)$. Here $\phi\in C^\infty(\mathbb R)$ is such that 
$\phi\equiv 0$ in a neighborhood of $-L$ and $\phi\equiv 1$ in a neighborhood of $0$. The other variables  $\mathbf{p}$, $s$ and $\bn$ are as defined earlier in this Section \ref{ssec:geom}.
Due to the size of $L$, we find that $\bfPsi_\eta$ is a homomorphism such that $\bfPsi_\eta|_{\Omega\setminus S_L}$ is the identity. Furthermore, $\bm{\Psi}_\eta$ together with its inverse\footnote{It exists provided that we choose $\phi$ such that $|\phi'|<L/\alpha$.} 
 $\bm{\Psi}_\eta^{-1} : \Omega_\eta \rightarrow\Omega$  possesses the following properties, see \cite{Br} for details. If for some $\alpha,R>0$, we assume that
\begin{align*}
\Vert\eta\Vert_{L^\infty_\by}
+
\Vert\zeta\Vert_{L^\infty_\by}
< \alpha <L \qquad\text{and}\qquad
\Vert\naby\eta\Vert_{L^\infty_\by}
+
\Vert\naby\zeta\Vert_{L^\infty_\by}
<R
\end{align*}
holds, then for any  $s>0$, $\varrho,p\in[1,\infty]$ and for any $\eta,\zeta \in B^{s}_{\varrho,p}(\omega)\cap W^{1,\infty}(\omega)$, we have that
\begin{align}
\label{210and212}
&\Vert \bm{\Psi}_\eta \Vert_{B^s_{\varrho,p}(\Omega\cup S_\alpha)}
+
\Vert \bm{\Psi}_\eta^{-1} \Vert_{B^s_{\varrho,p}(\Omega\cup S_\alpha)}
 \lesssim
1+ \Vert \eta \Vert_{B^s_{\varrho,p}(\omega)},
\\
\label{211and213}
&\Vert \bm{\Psi}_\eta - \bm{\Psi}_\zeta  \Vert_{B^s_{\varrho,p}(\Omega\cup S_\alpha)} 
+
\Vert \bm{\Psi}_\eta^{-1} - \bm{\Psi}_\zeta^{-1}  \Vert_{B^s_{\varrho,p}(\Omega\cup S_\alpha)} 
\lesssim
 \Vert \eta - \zeta \Vert_{B^s_{\varrho,p}(\omega)}
\end{align}
and
\begin{align}
\label{218}
&\Vert \partial_t\bm{\Psi}_\eta \Vert_{B^s_{\varrho,p}(\Omega\cup S_\alpha)}
\lesssim
 \Vert \partial_t\eta \Vert_{B^{s}_{ \varrho,p}(\omega)},
\qquad
\eta
\in W^{1,1}(I;B^{s}_{\varrho,p}(\omega))
\end{align}
holds uniformly in time with the hidden constants depending only on the reference geometry, on $L-\alpha$ and $R$. 

\subsection{Extension and smooth approximation on variable domains}\label{sec:ext}
In this subsection, we construct an extension operator
which extends functions from $\omega$ to the moving domain $\Omega_\eta$ for a given function $\eta$ defined on $\omega$. We follow \cite[Section 2.3]{BrScF}.
Since $\Omega$ is assumed to be sufficiently smooth, it is well-known that there is an extension operator $\mathscr F_\Omega$ which extends functions from $\partial\Omega$ to $\R^3$ and satisfies
\begin{equation*}\label{2.17a}
\mathscr F_\Omega:W^{\sigma,p}(\partial\Omega)\rightarrow W^{\sigma+1/p,p}(\R^3),
\end{equation*}
for all $p\in[1,\infty]$ and all $\sigma>0$, as well as $\mathscr F_\Omega v|_{\partial\Omega}=v$. Now we define $\mathscr F_\eta$ by 
\begin{align}\label{eq:2401b}
\mathscr F_\eta b=\mathscr F_\Omega ((b\bfn)\circ\bfvarphi^{-1})\circ{\bfPsi}_\eta^{-1},\quad b\in W^{\sigma,p}(\omega),
\end{align}
where $\bfvarphi$ is the function in the parametrization of  $\partial\Omega$.
If $\eta$ is regular enough, $\mathscr F_\eta$ behaves as a classical extension. 
We obtain the following Lemma which is a version of \cite[Lemma 2.2]{Br}, but also includes differentiabilities larger than 1.
\begin{lemma}\label{lem:3.8}
Let $\sigma>0$ and $p\in[1,\infty]$.
Let $\eta\in C^{0,1}(\omega)$ with $\|\eta\|_{L^\infty_\by}<\alpha<L$. Suppose further that $\eta\in B^{\sigma+1/p}_{\varrho,p}(\omega)$, where $p$ and $\varrho$ are related as in \eqref{eq:MSa} and \eqref{eq:MSb}.
 The operator
$\mathscr F_\eta$ defined in \eqref{eq:2401b} satisfies
\begin{align*}
\mathscr F_\eta: W^{\sigma,p}(\omega)\rightarrow W^{\sigma+1/p,p}(\Omega\cup S_\alpha)
\end{align*}
and $(\mathscr F_\eta b)\circ\bfvarphi_\eta=b\bfn$ on $\omega$ for all $b\in W^{\sigma,p}(\omega)$. In particular, we have
\begin{align*}
\|\mathscr F_\eta b\|_{W^{\sigma+1/p,p}(\Omega\cup S_\alpha)}\lesssim\|b\|_{W^{\sigma,p}(\omega)},
\end{align*}
where the hidden constant depends only on $\Omega,p,\sigma$, $\|\naby\eta\|_{L^\infty_\by}$, $\|\eta\|_{B^\sigma_{\varrho,p}}$ and $L-\alpha$.
\end{lemma}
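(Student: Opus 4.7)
The plan is to decompose $\mathscr F_\eta$ into three factors: the boundary chart $b \mapsto (b\bfn)\circ \bfvarphi^{-1}$ sending functions on $\omega$ to functions on $\partial\Omega$; the classical trace extension $\mathscr F_\Omega$ gaining $1/p$ derivatives onto $\R^3$; and the change of variables by the Hanzawa map $\bfPsi_\eta^{-1}$. The first two factors are standard: smoothness of $\bfvarphi$ and $\bfn$ and a compactness argument on the reference configuration yield
\begin{equation*}
\|(b\bfn)\circ\bfvarphi^{-1}\|_{W^{\sigma,p}(\partial\Omega)}\lesssim \|b\|_{W^{\sigma,p}(\omega)},
\end{equation*}
and then the defining mapping property of $\mathscr F_\Omega$ gives
\begin{equation*}
\|\mathscr F_\Omega((b\bfn)\circ\bfvarphi^{-1})\|_{W^{\sigma+1/p,p}(\R^3)}\lesssim \|b\|_{W^{\sigma,p}(\omega)}.
\end{equation*}

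The core of the argument is to show that postcomposition with $\bfPsi_\eta^{-1}$ preserves $W^{\sigma+1/p,p}$. For this I invoke the composition rule \eqref{lem:9.4.1}, which requires $\bfPsi_\eta^{-1}$ to be a bi-Lipschitz bijection from $\Omega\cup S_\alpha$ onto its image and to belong to the Sobolev multiplier class $\mathcal M(W^{\sigma+1/p,p}(\Omega\cup S_\alpha))$. The bi-Lipschitz property is immediate from the explicit formula \eqref{map} and the hypotheses $\eta\in C^{0,1}(\omega)$, $\|\eta\|_{L^\infty_\by}<\alpha<L$, as worked out in \cite{Br}. The multiplier bound follows by applying either \eqref{eq:MSa} or \eqref{eq:MSb} with $s=\sigma+1/p$ and $n=3$, where the alternative is dictated by whether $p(\sigma+1/p-1)$ is less than, equal to, or greater than $3$; in each case $\varrho$ is chosen exactly as stipulated in the hypothesis. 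These multiplier estimates bound $\|\bfPsi_\eta^{-1}\|_{\mathcal M(W^{\sigma+1/p,p})}$ in terms of $\|\nabla\bfPsi_\eta^{-1}\|_{L^\infty}$ and $\|\bfPsi_\eta^{-1}\|_{\bfB^{\sigma+1/p}_{\varrho,p}(\Omega\cup S_\alpha)}$, both of which are controlled by $\|\naby\eta\|_{L^\infty_\by}$ and the Besov norm of $\eta$ through \eqref{210and212} applied to $\bfPsi_\eta^{-1}$ (which inherits the same estimate from the structure of \eqref{map}).

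The trace identity $(\mathscr F_\eta b)\circ\bfvarphi_\eta=b\bfn$ on $\omega$ is a direct verification: since $s(\bfvarphi(\by))=0$ and $\phi(0)=1$, the construction \eqref{map} gives $\bfPsi_\eta\circ\bfvarphi=\bfvarphi_\eta$, hence $\bfPsi_\eta^{-1}\circ\bfvarphi_\eta=\bfvarphi$. Combined with the fact that $\mathscr F_\Omega$ recovers its boundary datum on $\partial\Omega$, this yields
\begin{equation*}
(\mathscr F_\eta b)\circ\bfvarphi_\eta = \bigl(\mathscr F_\Omega((b\bfn)\circ\bfvarphi^{-1})\bigr)\circ\bfvarphi = (b\bfn)\circ\bfvarphi^{-1}\circ\bfvarphi = b\bfn.
\end{equation*}

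The main obstacle I foresee is the bookkeeping in the multiplier step: checking precisely which of \eqref{eq:MSa} and \eqref{eq:MSb} applies in each range of $(\sigma,p,\varrho)$, carrying this choice through \eqref{lem:9.4.1}, and verifying that the hidden constants are controlled exclusively by the quantities listed in the statement rather than by the full $W^{\sigma+1/p,p}$-norm of $\bfPsi_\eta^{-1}$. Once this is settled, the estimate follows by concatenating the three factors above.
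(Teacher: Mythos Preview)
Your proposal is correct and follows essentially the same argument as the paper: use \eqref{210and212} to place $\bfPsi_\eta^{-1}$ in $B^{\sigma+1/p}_{\varrho,p}$, invoke \eqref{eq:MSa}/\eqref{eq:MSb} to obtain the Sobolev multiplier bound, and then apply the composition rule \eqref{lem:9.4.1} together with the mapping property of $\mathscr F_\Omega$ and the smoothness of $\bfvarphi$. You even supply the explicit trace verification $(\mathscr F_\eta b)\circ\bfvarphi_\eta=b\bfn$, which the paper leaves implicit.
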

\begin{proof}
On account of \eqref{210and212}
we have  $\bfPsi_\eta^{-1}\in B^{\sigma+1/p}_{\varrho,p}(\Omega\cup S_\alpha)$ as well.
By \eqref{eq:MSa} and \eqref{eq:MSb} this implies that $\bfPsi_\eta^{-1} \in \mathcal M(W^{\sigma+1/p,p}(\Omega\cup S_\alpha))$. Now 
\eqref{lem:9.4.1} becomes applicable and we obtain
\begin{align*}
\|\mathscr F_\eta b\|_{W^{\sigma+1/p,p}(\Omega\cup S_\alpha)}&\lesssim \|\mathscr F_\Omega ((b\bfn)\circ\bfvarphi^{-1})\|_{W^{\sigma+1/p,p}(\Omega)}\\
&\lesssim \|(b\bfn)\circ\bfvarphi^{-1})\|_{W^{\sigma,p}(\partial\Omega)}
\lesssim\|b\|_{W^{\sigma,p}(\omega)},
\end{align*}
which yields the claim.
\end{proof}
Finally, we prove the following smooth approximation result. For that we also require a solenoidal extension. The proof has been provided in \cite[Proposition 3.3]{MuSc}.
\begin{lemma}
\label{prop:musc}
For a given $\eta\in L^\infty(I;W^{1,2}( \omega ))$ with $\|\eta\|_{L^\infty_{t,y}}<\alpha<L$, there are linear operators
\begin{align*}
\mathscr K_\eta:L^1( \omega )\rightarrow\mathbb R,\quad
\Testzeta:\{\xi\in L^1(I;W^{1,1}( \omega )):\,\mathscr K_\eta(\xi)=0\}\rightarrow L^1(I;W^{1,1}_{\Div}(\Omega\cup S_{\alpha} )),
\end{align*}
such that the tuple $(\Testzeta(\xi-\mathscr K_\eta(\xi)),\xi-\mathscr K_\eta(\xi))$ satisfies
\begin{align*}
\Testzeta(\xi-\mathscr K_\eta(\xi))&\in L^\infty(I;L^2(\Omega_\eta))\cap L^2(I;W^{1,2}_{\Div}(\Omega_\eta)),\\
\xi-\mathscr K_\eta(\xi)&\in L^\infty(I;W^{2,2}( \omega ))\cap  W^{1,\infty}(I;L^{2}( \omega )),\\
\mathrm{tr}_\eta (\Testzeta&(\xi-\mathscr K_\eta(\xi))=\xi-\mathscr K_\eta(\xi),\\
\Testzeta(\xi-\mathscr K_\eta&(\xi))(t,x)=0 \text{ for } (t,x)\in I \times (\Omega \setminus S_{\alpha})
\end{align*}
provided that we have $\xi\in L^\infty(I;W^{2,2}( \omega ))\cap  W^{1,\infty}(I;L^{2}(\omega))$.
In particular, we have the estimates
\begin{align*}
\|\Testzeta(\xi-\mathscr K_\eta(\xi))\|_{L^q(I;W^{1,p}(\Omega \cup S_{\alpha}  ))}
&\lesssim \|\xi\|_{L^q(I;W^{1,p}( \omega ))}+\|\xi\nabla \eta\|_{L^q(I;L^{p}( \omega ))},\\
\|\partial_t\Testzeta(\xi-\mathscr K_\eta(\xi))\|_{L^q(I;L^{p}( \Omega\cup S_{\alpha}))}
&\lesssim \|\partial_t\xi\|_{L^q(I;L^{p}( \omega ))}+\|\xi\partial_t \eta\|_{L^q(I;L^{p}( \omega ))},
\\
\|\Testzeta(\xi-\mathscr K_\eta(\xi))\|_{L^q(I;W^{2,p}(\Omega \cup S_{\alpha}  ))}
&\lesssim \|\xi\|_{L^q(I;W^{2,p}( \omega ))}+\|\xi\nabla^2 \eta\|_{L^q(I;L^{p}( \omega ))}
\\
&\quad+\|\abs{\nabla \xi}\abs{\nabla \eta}\|_{L^q(I;L^{p}( \omega ))}+\|\abs{\xi}\abs{\nabla \eta}^2\|_{L^q(I;L^{p}( \omega ))}\\
&\quad+\|\xi\nabla \eta\|_{L^q(I;L^{p}( \omega ))},
\\
\|\partial_t\Testzeta(\xi-\mathscr K_\eta(\xi))\|_{L^q(I;W^{1,p}( \Omega\cup S_{\alpha}))}
&\lesssim \|\partial_t\xi\|_{L^q(I;W^{1,p}( \omega ))}+\|\xi\partial_t \nabla \eta\|_{L^q(I;L^{p}( \omega ))}
\\
&\quad+\|\abs{\partial_t \xi}\abs{\nabla \eta}\|_{L^q(I;L^{p}( \omega ))}+\|\abs{\nabla \xi}\abs{\partial_t \eta}\|_{L^q(I;L^{p}( \omega ))}
\\
&\quad+\|\xi\abs{\partial_t\eta}\abs{\nabla \eta}\|_{L^q(I;L^{p}( \omega ))},
\end{align*}
for any $p\in (1,\infty),q\in(1,\infty]$.
\end{lemma}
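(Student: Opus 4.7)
The strategy, following \cite[Proposition 3.3]{MuSc}, proceeds in three stages: fixing the compatibility constant $\mathscr K_\eta(\xi)$, producing a non-solenoidal extension of the adjusted datum $\tilde\xi:=\xi-\mathscr K_\eta(\xi)$, and restoring solenoidality with a Bogovskij correction. The compatibility constant is forced by the divergence theorem: any $\mathbf{w}\in W^{1,1}_{\Div}(\Omega\cup S_{\alpha})$ supported in $S_{\alpha}$ with $\mathbf{w}\circ\bfvarphi_\eta=(\xi-c)\bfn$ must satisfy $\int_{\partial\Omega_{\eta(t)}}\mathbf{w}\cdot\bfn_{\eta(t)}\,\mathrm{d}\mathcal{H}^2=0$; pulling back to $\omega$ turns this into a linear equation for $c$, whose pointwise-in-$t$ solution defines $\mathscr K_\eta$ as a linear functional.

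With $\tilde\xi$ in hand, I would apply Lemma~\ref{lem:3.8} to obtain $\mathscr F_\eta\tilde\xi$, a non-solenoidal extension supported in $\Omega\cup S_{\alpha}$ with $(\mathscr F_\eta\tilde\xi)\circ\bfvarphi_\eta=\tilde\xi\bfn$. To restore $\Div=0$, I subtract a Bogovskij corrector on the deformed strip $\bfPsi_\eta(S_{\alpha})$ with zero trace on its outer boundary, setting
\begin{equation*}
\Testzeta(\tilde\xi):=\mathscr F_\eta\tilde\xi-\mathcal{B}_\eta\bigl(\Div\mathscr F_\eta\tilde\xi\bigr).
\end{equation*}
Because $\mathcal{B}_\eta$ vanishes on both boundaries of the deformed strip, the trace of $\Testzeta(\tilde\xi)$ on $\partial\Omega_\eta$ is still $\tilde\xi\bfn$ and the field vanishes on $\Omega\setminus S_{\alpha}$; the solvability of the Bogovskij problem is exactly the compatibility condition ensured in the first step.

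The spatial estimates then follow by combining Lemma~\ref{lem:3.8} with the classical regularity of the Bogovskij operator on Lipschitz domains, whose constants are uniform in $t$ thanks to \eqref{210and212} under the standing hypothesis $\|\eta\|_{L^\infty_{t,y}}<\alpha<L$. Time derivatives are obtained via the chain rule applied to $\mathscr F_\Omega((\tilde\xi\bfn)\circ\bfvarphi^{-1})\circ\bfPsi_\eta^{-1}$: $\partial_t$ hits either $\tilde\xi$ (producing the $\partial_t\xi$ contributions) or $\bfPsi_\eta^{-1}$, which through \eqref{218} yields the factors of $\partial_t\eta$ listed on the right-hand side. Applying an additional spatial gradient and using the Leibniz rule generates the second-order mixed terms $\abs{\nabla\xi}\abs{\nabla\eta}$ and $\abs{\xi}\abs{\nabla\eta}^2$ as they appear in the statement.

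The main obstacle is keeping all estimates uniform in $t$ while the domain moves: the Bogovskij correction must simultaneously preserve the inner trace on $\partial\Omega_\eta$ and the outer zero trace, and its operator norm must be controlled independently of the time slice. This is precisely where the Lipschitz control $\|\eta\|_{W^{1,\infty}_\by}<R$ enters, since the deformed strip $\bfPsi_\eta(S_{\alpha})$ then has Lipschitz constant uniformly bounded in $t$, so that the standard Bogovskij estimates apply with time-independent constants. The verification of the mixed space-time norms amounts to a careful but routine bookkeeping of these chain-rule expansions, carried out in full in \cite[Proposition 3.3]{MuSc}.
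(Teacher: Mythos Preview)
The paper gives no proof here beyond the sentence ``The proof has been provided in \cite[Proposition 3.3]{MuSc},'' so there is nothing substantive in the paper to compare against; your sketch is precisely an outline of that reference's construction and is in the right spirit.

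One small gap worth flagging: the operator $\mathscr F_\eta$ from Lemma~\ref{lem:3.8} is \emph{not} supported in the tubular strip $S_\alpha$---it is built from a generic extension $\mathscr F_\Omega$ to all of $\mathbb R^3$, so $\mathscr F_\eta\tilde\xi$ need not vanish on $\Omega\setminus S_\alpha$. Subtracting a Bogovskij corrector with zero trace on both boundaries of the strip will therefore not, by itself, yield the required support condition $\Testzeta(\tilde\xi)=0$ on $\Omega\setminus S_\alpha$. The construction in \cite{MuSc} instead uses an explicit extension in tubular coordinates $(s,\by)$ that is already compactly supported in the normal direction; alternatively, one can multiply $\mathscr F_\eta\tilde\xi$ by a smooth radial cutoff in $s$ before applying the Bogovskij correction. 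Either fix is routine, and you correctly defer the full bookkeeping to the cited reference.
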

With the help of Lemma \ref{prop:musc} we obtain the following.
\begin{lemma}\label{lem:smooth}
For any tuple $(\eta,\bfv)$ belonging to the class
\begin{align}\label{eq:2206}\begin{aligned}
& L^2 \big(I; W^{2,\infty}(\omega) \big) \cap W^{1,2}(I;W^{2,2}(\omega)) 
\times L^2 \big(I; W^{1,2}_{\Div}(\Omega_{\eta}) \big)\cap C^0(\overline I;L^2(\Omega_{\eta}) )
\end{aligned}
\end{align}
and satisfying $\bfv\circ\bfvarphi_\eta=\partial_t\eta\bn$ on $\omega$, where $\|\eta\|_{L^\infty_{t,y}}<\alpha<L$ there is a sequence $(\eta_n,\bfv_n)$
which belongs to the class \eqref{eq:2206} and satisfies additionally
\begin{align*}
\eta_n\in C^\infty(I\times \omega),\quad \bfv_n\in W^{1,2} \big(I; W^{1,2}_{\Div}(\Omega_{\eta})\big),
\end{align*}
and $\bfv_n\circ\bfvarphi_{\eta}=(\partial_t\eta_n-\mathscr K_\eta(\partial_t\eta_n))\bfn$ on $\omega$, which converges to $(\eta,\bfv)$ strongly and has uniform bounds in the spaces given in \eqref{eq:2206}.
\end{lemma}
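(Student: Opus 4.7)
The plan is to construct the approximations by decomposing $\bfv$ into a boundary-carrying piece and a remainder with homogeneous trace, smoothing $\eta$ separately, and then reassembling. Smoothing $\eta$ is routine: extend $\eta$ by even reflection across $\{t=0\}$ and $\{t=T\}$ and convolve in $(t,\by)$ with a standard mollifier to obtain $\eta_n\in C^\infty(I\times\omega)$. Standard mollification estimates produce uniform bounds in $L^2(I;W^{2,\infty}(\omega))\cap W^{1,2}(I;W^{2,2}(\omega))$, strong convergence to $\eta$ in $W^{1,2}(I;W^{2,2}(\omega))\cap L^2(I;W^{2,p}(\omega))$ for every $p<\infty$, and, for $n$ large, $\|\eta_n\|_{L^\infty_{t,\by}}<\alpha$.

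For the trace-carrying piece, I would invoke Lemma~\ref{prop:musc} to define $\mathbf{w}:=\Testzeta(\partial_t\eta-\mathscr K_\eta(\partial_t\eta))$ and $\mathbf{w}_n:=\Testzeta(\partial_t\eta_n-\mathscr K_\eta(\partial_t\eta_n))$. Since $\bfv$ is solenoidal on $\Omega_\eta$ with trace $\partial_t\eta\,\bn$, the divergence theorem gives the compatibility $\int_\omega\partial_t\eta\,(\bn\cdot\bn_\eta)\,|\det\naby\bfvarphi_\eta|\,\mathrm{d}\by=0$, which by the definition of $\mathscr K_\eta$ forces $\mathscr K_\eta(\partial_t\eta)=0$; in particular $\mathbf{w}=\Testzeta(\partial_t\eta)$ reproduces the trace of $\bfv$ and lies in $L^2(I;W^{1,2}_{\Div}(\Omega_\eta))\cap C^0(\overline I;L^2(\Omega_\eta))$. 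The estimates of Lemma~\ref{prop:musc} combined with the smoothness of $\eta_n$ in time then yield $\mathbf{w}_n\in W^{1,2}(I;W^{1,2}_{\Div}(\Omega_\eta))$ with uniform bounds, while linearity of $\Testzeta$ converts the convergence of $\eta_n$ into convergence $\mathbf{w}_n\to\mathbf{w}$ in the norms of~\eqref{eq:2206}.

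The main obstacle is the zero-trace remainder $\mathbf{r}:=\bfv-\mathbf{w}\in L^2(I;W^{1,2}_{0,\Div}(\Omega_\eta))\cap C^0(\overline I;L^2(\Omega_\eta))$: one has to produce an extra derivative in $t$ while preserving both divergence-freeness and homogeneous trace on the moving boundary $\partial\Omega_\eta$. My plan is to use the Piola transform associated with the Hanzawa map $\bfPsi_\eta$ to pull $\mathbf{r}$ back to the fixed reference domain $\Omega$; this transform preserves divergence-freeness and the homogeneous boundary condition, so the transformed field $\hat{\mathbf{r}}$ lies in $L^2(I;W^{1,2}_{0,\Div}(\Omega))\cap C^0(\overline I;L^2(\Omega))$. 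After even extension in $t$ and one-dimensional mollification in time, the resulting approximant lies in $W^{1,2}(I;W^{1,2}_{0,\Div}(\Omega))$, and pushing back by the inverse Piola transform produces $\mathbf{r}_n\in W^{1,2}(I;W^{1,2}_{0,\Div}(\Omega_\eta))$. The commutator created by $\partial_t$ interacting with the Piola transform involves $\partial_t\bfPsi_\eta$, whose regularity is controlled by~\eqref{218} in terms of $\partial_t\eta\in L^2(I;W^{2,2}(\omega))\hookrightarrow L^2(I;L^\infty(\omega))$ and is therefore harmless. Finally, setting $\bfv_n:=\mathbf{w}_n+\mathbf{r}_n$ gives the desired sequence: by construction $\bfv_n$ is divergence-free on $\Omega_\eta$, realises the prescribed trace $(\partial_t\eta_n-\mathscr K_\eta(\partial_t\eta_n))\bn$, lies in $W^{1,2}(I;W^{1,2}_{\Div}(\Omega_\eta))$ with uniform bounds, and converges to $\bfv$ in the norms of~\eqref{eq:2206}.
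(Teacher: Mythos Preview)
Your proposal is correct and follows essentially the same route as the paper: decompose $\bfv$ into the solenoidal extension $\Testzeta(\partial_t\eta)$ plus a zero-trace remainder, smooth $\eta$ by mollification to handle the first piece, and treat the remainder by pulling back to the fixed domain $\Omega$ via the Piola transform associated with $\bfPsi_\eta$, mollifying there, and pushing forward. The only cosmetic difference is that the paper also mollifies the pulled-back remainder in space (after a cut-off/Bogovskij correction) to land in $W^{1,2}(I;C^\infty_{c,\Div}(\Omega))$, whereas you mollify in time alone; since only $W^{1,2}(I;W^{1,2}_{0,\Div}(\Omega))$ is needed, your simplification is fine.
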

\begin{proof}
The proof is strongly related to \cite[Section 6]{MuSc}. We define
\begin{align*}
\tilde\bfv_0=\bfv-\Testzeta&(\partial_t\eta\bfn),
\end{align*}
which we use as decomposition. 
Note that $\partial_t\eta\bfn$ is the trace of the divergence free function $\bfv$, then it is not difficult to derive that (please refer to \cite[Lemma 6.3]{MuSc} for more details)
\begin{align*}
\mathscr K_\eta(\partial_t\eta\bfn)=0.
\end{align*}
This implies that $\Testzeta(\partial_t\eta\bfn)$ is well defined.  Further $\tilde\bfv_0$ has zero trace on $\partial\Omega_\eta$. 

Each part can be smoothly approximated. The first part uses $\eta_n$ as a smooth approximation of $\eta$ for instance by convolution in space and time. This also makes $\partial_t\eta_n$ and $\partial_t^2\eta_n$ smooth. Thus by Lemma \ref{prop:musc} we obtain
\begin{align*}
\Testzeta&(\partial_t\eta_n\bfn)\in W^{1,2} \big(I; W^{1,2}(\Omega_{\eta}) \big)\cap  L^2 \big(I; W^{2,2}(\Omega_{\eta}) \big)\cap C^0(\overline I;W^{1,2}(\Omega_{\eta}) ).
\end{align*}
Moreover, the tuple $(\eta_n,\Testzeta(\partial_t\eta_n-\mathscr K_\eta(\partial_t\eta_n))\bfn))$ converges to the expected limit with respect to the topology from \eqref{eq:2206}. As was already realized in \cite{MuSc,schwarzacher2022weak} this part of the approximation only uses the regularity of $\eta$ related to the energy estimate.

It is for the approximation  of $\tilde\bfv_0$ that more regularity has to be assumed for $\eta$. This is where  we use the Piola transformation $\mathscr{T}_{\eta}$ that changes the support of a function from $\Omega$ to $\Omega_\eta$ without changing the divergence.
It is defined as
\begin{align*}
\mathscr{T}_{\eta}\bfw
=
\big(\nabx  \bm{\Psi}_{\eta}(\mathrm{det}\nabx  \bm{\Psi}_{\eta})^{-1}
\bfw
\big)\circ \bm{\Psi}_{\eta}^{-1}\text{ with inverse }\mathscr{T}_{\eta}^{-1}\bfw
=
\big((\nabx  \bm{\Psi}_{\eta})^{-1}(\mathrm{det}\nabx  \bm{\Psi}_{\eta}) 
\bfw
\big)\circ \bm{\Psi}_{\eta} .
\end{align*}
Please note that the derivatives of $\mathscr{T}_{\eta}\bfw$ and $\mathscr{T}_{\eta}^{-1}\bfw$ can naturally be bounded by the respective derivatives of $\eta$. Hence as $\eta\in L^2(I,W^{2,\infty}(\omega))\cap W^{1,2}(I;W^{2,2}(\omega))$ we find for any $p,q\in [1,\infty]$ that 
\begin{align*}
\mathscr{T}_{\eta}:  W^{1,2}(I; W^{1,2}_{0,\Div}(\Omega)&\rightarrow  W^{1,2} \big(I; W^{1,2}_{0,\Div}(\Omega_\eta) \big),
\\
\mathscr{T}_{\eta}: L^p \big( I; L^q(\Omega)\big)&\to L^p\big( I; L^{q}(\Omega_\eta)\big),
\\
\mathscr{T}_{\eta}: L^2 \big( I; W^{1,2}_{0,\Div}(\Omega)\big)\cap L^\infty\big(I; L^{2}(\Omega)\big)&\rightarrow L^2 \big(I; W^{1,2}_{0,\Div}(\Omega_\eta) )\cap L^\infty\big( I; L^{2}(\Omega_\eta)\big),
\end{align*}
with uniform bounds. The same bounds hold for $\mathscr{T}_{\eta}^{-1}$. Estimates yielding these continuities can be shown by direct computations. Note, for instance, that the first estimate requires that $\nabla \eta\in L^\infty(I\times \omega)$.

 The approximation is then defined by first considering 
 $$\mathscr{T}_{\eta}^{-1}\tilde\bfv_0\in L^2 \big(I; W^{1,2}_{0,\Div}(\Omega) )\cap W^{1,2} \big(I; L^2(\Omega) \big).$$ 
 This function can now be smoothly approximated by a sequence $(\mathscr{T}_{\eta}^{-1}\tilde\bfv_0)_n\subset W^{1,2}(I;C^\infty_{c,\Div}(\Omega))$, where $(\mathscr{T}_{\eta}^{-1}\tilde\bfv_0)_n\to\mathscr{T}_{\eta}^{-1}\tilde\bfv_0$ almost everywhere as $n\rightarrow\infty$. This can be achieved by cutting-off the boundary, then applying a Bogovskij operator to make the function solenoidal and convoluting in time-space. We remark that all these operations are linear and continuous in the spaces regarded here. Now we fix
 \[
 \tilde\bfv_{0,n}:=\mathscr{T}_{\eta}(\mathscr{T}_{\eta}^{-1}\tilde\bfv_0)_n.
 \] 
 Now, by construction we have $\tilde\bfv_{0,n}\in W^{1,2} \big(I; W^{1,2}_{\Div}(\Omega_{\eta})\big)$, as required. Furthermore, we find that a.e.
 \begin{align*}
 \abs{\nabla \tilde\bfv_{0,n}}\lesssim \abs{(\mathscr{T}_{\eta}^{-1}\tilde\bfv_0)_n}+\abs{\nabla(\mathscr{T}_{\eta}^{-1}\tilde\bfv_0)_n}
 \end{align*}
with a hidden constant depending on $\abs{\nabla^2\eta}$.
 Hence we obtain that
 \begin{align*}
 \norm{\nabla \tilde\bfv_{0,n}}_{L^2(I,L^2(\Omega_\eta))}&\lesssim \norm{\nabla(\mathscr{T}_{\eta}^{-1}\tilde\bfv_0)_n}_{L^2(I,L^2(\Omega_\eta))}+\norm{(\mathscr{T}_{\eta}^{-1}\tilde\bfv_0)_n}_{L^\infty(I;L^2(\Omega_\eta))}
 \\
 &\lesssim \norm{\tilde\bfv_0}_{L^2(I,W^{1,2}(\Omega_\eta))}+\norm{\tilde\bfv_0}_{ L^\infty(I;L^2(\Omega_\eta))},
 \end{align*}
with a hidden constant depending on $\norm{\nabla \eta}_{L^\infty(I\times\omega)}$ and $\norm{\nabla^2 \eta}_{L^2(I;L^\infty(\omega))}$.
%
Combining the above, the sequence
$(\eta_n,\tilde\bfv_{0,n}+\Testzeta((\partial_t\eta_n-\mathscr K_\eta(\partial_t\eta_n))\bfn))$ has  the desired properties. 
\end{proof}

\subsection{The concept of solutions}
In this subsection, we introduce the notions of a solution to \eqref{1}--\eqref{interfaceCond} that are under consideration. We start with the definition of a weak solution.
\begin{definition}[Weak solution] \label{def:weakSolution}
Let $(\bff, g, \eta_0, \eta_*, \bu_0)$ be a dataset such that
\begin{equation}
\begin{aligned}
\label{dataset}
&\bff \in L^2\big(I; L^2_{\mathrm{loc}}(\mathbb{R}^3)\big),\quad
g \in L^2\big(I; L^2(\omega)\big), \quad
\eta_0 \in W^{2,2}(\omega) \text{ with } \Vert \eta_0 \Vert_{L^\infty( \omega)} < L, 
\\
& 
\eta_* \in L^2(\omega), \qquad \bu_0\in L^2_{\mathrm{\Div}}(\Omega_{\eta_0}) \text{ is such that }\bu_0\circ\bfvarphi_{\eta_0} =\eta_* \bfn\text{ on $\omega$}.
\end{aligned}
\end{equation} 
We call the tuple
$(\eta,\bu)$
a weak solution to the system \eqref{1}--\eqref{interfaceCond} with data $(\bff, g, \eta_0,  \eta_*,\bu_0)$ provided that the following holds:
\begin{itemize}
\item[(a)] The structure displacement $\eta$ satisfies
\begin{align*}
\eta \in W^{1,\infty} \big(I; L^2(\omega) \big)\cap W^{1,2} \big(I; W^{1,2}(\omega) \big)\cap  L^\infty \big(I; W^{2,2}(\omega) \big) \quad \text{with} \quad \Vert \eta \Vert_{L^\infty(I \times \omega)} <L,
\end{align*}
as well as $\eta(0)=\eta_0$ and $\partial_t\eta(0)=\eta_*$.
\item[(b)] The velocity field $\bu$ satisfies
\begin{align*}
 \bu \in L^\infty \big(I; L^2(\Omega_{\eta}) \big)\cap  L^2 \big(I; W^{1,2}_{\Div}(\Omega_{\eta}) \big) \quad \text{with} \quad 
\bu\circ\bfvarphi_\eta =\partial_t \eta {\bfn}\quad\text{on}\quad I\times\omega,
\end{align*}
as well as $\bu(0)=\bu_0$.
\item[(c)] For all  $(\phi, {\bfphi}) \in C^\infty(\overline{I}\times\omega) \times C^\infty(\overline{I} ;C^\infty_{\Div}(\R^3))$ with $\phi(T,\cdot)=0$, ${\bfphi}(T,\cdot)=0$ and $\bfphi\circ\bfvarphi_\eta =\phi {\bfn}$ on $I\times\omega$, we have
\begin{equation*}
\begin{aligned}
&\int_I  \frac{\mathrm{d}}{\dt}\bigg(\int_\omega \partial_t \eta \, \phi \dy
+
\int_{\Oeta}\bu  \cdot {\bfphi}\dx
\bigg)\dt 
\\
&=\int_I  \int_{\Oeta}\big(  \bu\cdot \partial_t  {\bfphi} + \bu \otimes \bu: \nabla {\bfphi} 
 -  
\nabla \bu:\nabla {\bfphi} +\bff\cdot{\bfphi} \big) \dx\dt
\\
&\quad+
\int_I \int_\omega \big(\partial_t \eta\, \partial_t\phi-\partial_t\naby\eta\cdot\naby \phi+
 g\, \phi-\Dely\eta\,\Dely \phi \big)\dy\dt.
 \end{aligned}
\end{equation*}
\item[(d)] For all $t\in I$, we have
\begin{equation*}\label{energyEst}
\begin{aligned}
&\mathcal{E}(t)
+
\int_0^t\int_\omega\vert\partial_t\naby\eta \vert^2\dy\ds
+
\int_0^t
 \int_{\Omega_{\eta(\sigma)}}\vert \nabla \bu \vert^2 \dx\ds
\\& \leq
 \mathcal{E}(0)
+\frac{1}{2}
\int_0^t \int_\omega  g\partial_t\eta\dy\ds
 + \frac{1}{2}\int_0^t \int_{\Omega_{\eta(\sigma)}}  \bu\cdot \mathbf{f} \dx\ds .
\end{aligned}
\end{equation*}
where
\begin{equation*}
\mathcal{E}(t)
:=
\frac{1}{2}
\int_\omega
\big(
\vert \partial_t\eta(t)\vert^2 + \vert \Dely\eta(t)\vert^2
\big)\dy
+
\frac{1}{2}
\int_{\Omega_{\eta(t)}}\vert   \bu(t) \vert^2 \dx.
\end{equation*}
\end{itemize}
\end{definition}

The existence of a weak solution can be shown as in \cite{LeRu}. The term $\partial_t\Dely\eta$ is not included there, but it does not alter the arguments. 
Note that here, we use a pressure-free formulation (that is, with test-function satisfying additionally $\Div\bfphi=0$). If the solution possesses more regularity,
the pressure can be recovered by solving
\begin{align}
\label{eq:press}
\begin{aligned}
-\Delta \pi&= \diver \big(\partial_t \bu +(\bu\cdot\nabla)\bu-\Delta\bu -\bff\big)\qquad&\text{ in }I\times \Omega_\eta,
\\
\pi  &= \big(\vert \mathrm{det}(\naby \bm{\varphi}_{\eta})\vert^{-1}(\partial_t^2\eta - \partial_t\Dely \eta +\Dely^2\eta-g)\bn^\intercal\bn_\eta\big)\circ\bm{\varphi}_{\eta}^{-1}
&
\\&\quad+\bn\circ\bm{\varphi}_{\eta}^{-1}\big(\nabx\bu+(\nabx\bu)^\intercal \big)\big)\bn_\eta\circ\bm{\varphi}_{\eta}^{-1} &\text{ on }I\times \partial\Omega_\eta.
\end{aligned}
\end{align}
 Setting
$\pi(t)=\pi_0(t)+c_\pi(t)$, where $\int_{\Omega_{\eta(t)}}\pi_0(t)\dx=0$ and $c_\pi=\pi-\pi_0$ is constant in space and 
testing the structure equation with 1 we obtain
\begin{equation}
\begin{aligned}\label{eq:pressure}
c_\pi(t)\int_{\omega}\bfn\cdot\bfn_\eta|\det(\naby\bfvarphi_\eta)|\dy
&=
\int_{\omega}\bfn\big(\nabla\bu+(\nabla\bu)^\intercal-\pi_0\mathbb I_{3\times 3}\big)\circ\bfvarphi_\eta\bfn_\eta|\det(\naby\bfvarphi_\eta)|\dy
\\&\quad+\int_\omega\partial_t^2\eta\dy-\int_\omega g\dy.
\end{aligned}
\end{equation}
Since $\Omega_\eta$ is $C^1$ uniformly in time, the operator $\Delta$ has the usual regularity and uniqueness properties for $C^1$ domains. In particular, it allows for a unique solution in $L^2$, if the right hand side is in $W^{-2,2}$ and the boundary value in $W^{-\frac{1}{2},2}$ or for a unique solution in $W^{1,2}$, provided that its boundary value is in $W^{\frac{1}{2},2}$ and the right hand side is in $W^{-1,2}$. Moreover, in this particular case, the solution of \eqref{eq:press} satisfies 
\begin{align*}
-\nabla \pi=\partial_t \bu +(\bu\cdot\nabla)\bu-\Delta\bu-\bff,
\end{align*}
distributionally which implies that
\begin{align*}
\int_I\int_{\Omega_\eta}|\nabla\pi|^2\dx\dt&\lesssim \int_I\int_{\Omega_\eta}\big(|\partial_t\bv\vert^2+|(\bu\cdot\nabla)\bu)|^2+|\Delta\bu|^2+\vert\bff\vert^2\big)\dx\dt
\\&\lesssim
\int_I\|\partial_t\bu\|^2_{L^{2}(\Omega_\eta)}\dt
+
\bigg( \int_I\|\bu\|^4_{L^4(\Omega_\eta)}\bigg)^{\frac{1}{2}}\bigg(\int_I\|\nabla\bu\|^4_{L^{4}(\Omega_\eta)}\dt\bigg)^{\frac{1}{2}}
\\&\quad
+\int_I\|\nabla^2\bu\|^2_{L^{2}(\Omega_\eta)}\dt
+\int_I\|\bff\|^2_{L^{2}(\Omega_\eta)}\dt,
\end{align*}
whenever the right hand side is finite, independent of the boundary value of $\pi$ in \eqref{eq:press}.
This is the case for a strong solution defined as follows.

\begin{definition}[Strong solution] \label{def:strongSolution}
We call the triple $(\eta,\bu,\pi)$ a strong solution to \eqref{1}--\eqref{interfaceCond} provided that $(\eta,\bu)$ is a weak solution to \eqref{1}--\eqref{interfaceCond}, which satisfies
$$
\eta \in W^{1,\infty} \big(I; W^{1,2}(\omega) \big)\cap W^{1,2} \big(I; W^{2,2}(\omega) \big)\cap  L^\infty \big(I; W^{3,2}(\omega) \big) \cap W^{2,2}(I;L^2(\omega))\cap L^2(I;W^{4,2}(\omega)),$$
$$ \bu \in W^{1,2} \big(I; L^{2}(\Omega_{\eta}) \big)\cap  L^2 \big(I; W^{2,2}(\Omega_{\eta}) \big),\quad\pi\in  L^2 \big(I; W^{1,2}(\Omega_{\eta}) \big).
$$
\end{definition}

For a strong solution $(\eta,\bu,\pi)$ the momentum equation holds in the strong sense, that is we have
 \begin{align} 
 \partial_t \bu+(\bu\cdot\nabla)\bu&=\Delta\bu-\nabla \pi+\bff&\label{1'}
 \end{align}
a.e. in $I\times\Omega_\eta$. The shell equation together with the regularity properties above yield $\eta\in L^2(I;W^{4,2}(\omega))$.  Hence the shell equation holds in the strong sense as well, that is, we have
 \begin{align}\label{2'}
\ \partial_t^2\eta-\partial_t\Dely\eta+\Dely^2\eta=g-\bn^\intercal\bm{\tau}\circ\bm{\varphi}_\eta\bn_\eta
\vert \mathrm{det}(\naby \bm{\varphi}_\eta)\vert
 \end{align}
a.e. in $I\times\omega$. 
Note that for a strong solution, the Cauchy stress $\bftau=\nabla\bu+(\nabla\bu)^\intercal-\pi\mathbb I_{3\times 3}$ possesses enough regularity to be evaluated at the moving boundary (this is due to the trace theorem and the uniform Lipschitz continuity of $\Omega_\eta$).

\subsection{The Stokes equations in non-smooth domains}
In this section, we present the necessary framework to parametrise
the boundary of the underlying domain $\Omega\subset\R^3$ by local maps of a certain regularity. This yields, in particular, a rigorous definition of a  $\mathcal M^{s,p}$-boundary. We follow the presentation from \cite{Br} (see also \cite{Br2}).

We assume that $\partial\Omega$ can be covered by a finite
number of open sets $\mathcal U^1,\dots,\mathcal U^\ell$ for some $\ell\in\mathbb N$, such that
the following holds. For each $j\in\{1,\dots,\ell\}$ there is a reference point
$\by^j\in\R^3$ and a local coordinate system $\{\be^j_1,\be^j_2,\be_3^j\}$ (which we assume
to be orthonormal and set $\mathcal Q_j=(\be_1^j|\be_2^j |\be_3^j)\in\mathbb R^{3\times 3}$), a function
$\varphi_j:\mathbb R^{2}\rightarrow\mathbb R$
and $r_j>0$
with the following properties:
\begin{enumerate}[label={\bf (A\arabic{*})}]
\item\label{A1} There is $h_j>0$ such that
$$\mathcal U^j=\{\bx=\mathcal Q_j\bz+\by^j\in\mathbb R^3:\,\bz=(\bz',z_3)\in\R^3,\,|\bz'|<r_j,\,
|z_3-\varphi_j(\bz')|<h_j\}.$$
\item\label{A2} For $\bx\in\mathcal U^j$ we have with $\bz=\mathcal Q_j^\intercal(\bx-\by^j)$
\begin{itemize}
\item $\bx\in\partial\Omega$ if and only if $z_3=\varphi_j(\bz')$;
\item $\bx\in\Omega$ if and only if $0<z_3-\varphi_j(\bz')<h_j$;
\item $\bx\notin\Omega$ if and only if $0>z_3-\varphi_j(\bz')>-h_j$.
\end{itemize}
\item\label{A3} We have that
$$\partial\Omega\subset \bigcup_{j=1}^\ell\mathcal U^j.$$
\end{enumerate}
In other words, for any $\bx_0\in\partial\Omega$ there is a neighborhood $U$ of $\bx_0$ and a function $\phi:\mathbb R^{2}\rightarrow\mathbb R$ such that after translation and rotation\footnote{By translation via $\by^j$ and rotation via $\mathcal Q_j$ we can assume that $\bx_0=\bm{0}$ and that the outer normal at~$\bx_0$ is pointing in the negative $x_3$-direction.}
 \begin{equation*}\label{eq:3009}
 U \cap \Omega = U \cap G,\quad G = \set{(\bx',x_3)\in \R^3 \,:\, \bx' \in \R^{2}, x_3 > \phi(\bx')}.
 \end{equation*}
 The regularity of $\partial\Omega$ will be described by means of local coordinates as just described.
 \begin{definition}\label{def:besovboundary}
 Let ${\mathcal{O}}\subset\R^3$ be a bounded domain, $s>0$ and $1\leq \rho,q\leq\infty$. We say that $\partial{\mathcal{O}}$ belongs to the class $\bfB^s_{\rho,q}$ if there is $\ell\in\mathbb N$ and functions $\varphi_1,\dots,\varphi_\ell\in\bfB^s_{\rho,q}(\mathbb R^{2})$ satisfying \ref{A1}--\ref{A3}.
 \end{definition}
Clearly, a similar definition applies for a Lipschitz boundary (or a $C^{1,\alpha}$-boundary with $\alpha\in(0,1)$) by requiring that $\varphi_1,\dots,\varphi_\ell\in W^{1,\infty}(\mathbb R^{2})$ (or $\varphi_1,\dots,\varphi_\ell\in C^{1,\alpha}(\mathbb R^{2})$). We say that the local Lipschitz constant of $\partial{\mathcal{O}}$, denoted by $\mathrm{Lip}(\partial{\mathcal{O}})$, is (smaller or) equal to some number $L>0$ provided that the Lipschitz constants of $\varphi_1,\dots,\varphi_\ell$ are not exceeding $L$. 

After these preparations let us consider the steady Stokes system
\begin{equation}\label{eq:Stokes}
\left\{\begin{aligned}
&\Delta \bu-\nabla\pi=-\bff,	\\
&\Div\bu=0,\\
&\bu|_{\partial{\mathcal{O}}}=\bu_{\partial},
\end{aligned}\right.
\end{equation}
in a domain ${\mathcal{O}}\subset\R^3$ with unit normal $\bfn$. The result given in the following theorem is a maximal regularity estimate for the solution of \eqref{eq:Stokes} in terms of the right-hand side. The boundary data under minimal assumption on the regularity of $\partial\mathcal O$ is obtained in \cite[Theorem 3.1]{Br}. 
\begin{theorem}\label{thm:stokessteady}
Let $p\in(1,\infty)$, $s\geq 1+\frac{1}{p}$ and 
\begin{align*}
\varrho\geq p\quad\text{if}\quad p(s-1)\geq 3,\quad \varrho\geq \tfrac{2p}{p(s-1)-1}\quad\text{if}\quad p(s-1)< 3,
\end{align*}
 such that 
$3\big(\frac{1}{p}-\frac{1}{2}\big)+1\leq  s$.
 Suppose that ${\mathcal{O}}$ is a $\bfB^{\theta}_{\varrho,p}$-domain for some $\theta>s-1/p$ with locally small Lipschitz constant, $\bff\in W^{s-2,p}({\mathcal{O}})$ and $\bu_{\partial}\in W^{s-1/p,p}(\partial{\mathcal{O}})$ with $\int_{\partial{\mathcal{O}}}\bu_\partial\cdot\bfn\,\dd\mathcal H^{2}=0$. Then there is a unique solution to \eqref{eq:Stokes} and we have
\begin{align*}
\|\bu\|_{W^{s,p}({\mathcal{O}})}+\|\pi\|_{W^{s-1,p}({\mathcal{O}})}\lesssim\|\bff\|_{W^{s-2,p}({\mathcal{O}})}+\|\bu_{\partial}\|_{W^{s-1/p,p}(\partial{\mathcal{O}})}.
\end{align*}
\end{theorem}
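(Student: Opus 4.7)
My plan is to follow the standard localization-and-flattening strategy for maximal regularity, with the crucial point being that the change of variables used to flatten the boundary must be controlled through the Sobolev multiplier framework described earlier in the paper.

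\textbf{Step 1: Reduction to zero boundary data.} First I would reduce to the case $\bu_\partial=0$ by constructing a divergence-free extension $\widetilde{\bu}\in W^{s,p}(\mathcal O)$ of $\bu_\partial$ with $\|\widetilde{\bu}\|_{W^{s,p}(\mathcal O)}\lesssim \|\bu_\partial\|_{W^{s-1/p,p}(\partial\mathcal O)}$. The trace-preserving extension operator of Lemma~\ref{lem:3.8} provides an extension of the normal component; one then corrects the divergence by a Bogovskij operator adapted to $\mathcal O$ (using the $\bfB^{\theta}_{\varrho,p}$-regularity of $\partial\mathcal O$) to render the extension solenoidal. Setting $\bfw=\bu-\widetilde{\bu}$ then reduces the problem to a Stokes system with forcing $\bff+\Delta\widetilde{\bu}\in W^{s-2,p}(\mathcal O)$ and homogeneous boundary data.

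\textbf{Step 2: Localization and flattening.} Using assumptions \ref{A1}--\ref{A3}, cover $\partial\mathcal O$ by charts $\mathcal U^1,\dots,\mathcal U^\ell$ and fix a subordinate partition of unity $\{\chi_j\}$. Each $\chi_j\bfw$ satisfies a Stokes system in $\mathcal U^j\cap\mathcal O$ whose right-hand side contains the original forcing plus commutator terms involving $\nabla\chi_j$ acting on $\bfw$ and $\pi$; these are of lower order and will be handled by interpolation and absorption at the end. On each boundary chart, straighten the boundary by composing with the Maz'ya--Shaposhnikova-type map $\bfPhi_j$ built from the local graph function $\varphi_j$, transforming the problem to the half-space $\mathbb{H}$.

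\textbf{Step 3: Half-space maximal regularity and transport back.} In the half-space, classical maximal regularity for the Stokes system yields the estimate $\|\bfw\|_{W^{s,p}(\mathbb H)}+\|\pi\|_{W^{s-1,p}(\mathbb H)}\lesssim \|\widetilde{\bff}\|_{W^{s-2,p}(\mathbb H)}$. The main obstacle, and the real content of the theorem, is transporting this bound back to $\mathcal O$ under a transformation $\bfPhi_j$ whose Jacobian is merely in $\bfB^{\theta-1}_{\varrho,p}$. This is where the conditions on $(s,p,\varrho)$ are essential: they are precisely calibrated so that $\nabla\bfPhi_j$ (and the analogous object for its inverse) is a multiplier on $W^{s-1,p}$ via \eqref{eq:MSa}--\eqref{eq:MSb}, and so that \eqref{lem:9.4.1} applies at differentiability $s$. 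The condition $3(1/p-1/2)+1\leq s$ aligns the integrability indices so that the extended map $\mathcal T\varphi_j$ lies in the correct Besov class at the trace level. Under these, each term arising in the chain/product rule after composition is controllable by $\|\bu\|_{W^{s,p}}+\|\pi\|_{W^{s-1,p}}$ times a constant times the local Lipschitz constant of $\partial\mathcal O$.

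\textbf{Step 4: Absorption and conclusion.} Summing over charts $j$ and adding an interior estimate (which is classical for Stokes inside a smooth subdomain), I obtain a global bound of the schematic form
\[
\|\bu\|_{W^{s,p}(\mathcal O)}+\|\pi\|_{W^{s-1,p}(\mathcal O)}\lesssim \|\bff\|_{W^{s-2,p}(\mathcal O)}+\|\bu_\partial\|_{W^{s-1/p,p}(\partial\mathcal O)}+C_{\mathrm{Lip}}\bigl(\|\bu\|_{W^{s,p}(\mathcal O)}+\|\pi\|_{W^{s-1,p}(\mathcal O)}\bigr)+\text{l.o.t.},
\]
where the constant $C_{\mathrm{Lip}}$ depends linearly on the local Lipschitz constant of $\partial\mathcal O$. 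Choosing the local Lipschitz constant small enough to absorb the top-order term on the right, and handling the lower-order terms by interpolation against the base-level bound (which follows from standard $W^{1,2}$-theory for Stokes in Lipschitz domains combined with a duality/bootstrap argument), yields the claimed estimate. Uniqueness follows from the energy identity in the Lipschitz setting. The delicate point throughout is that the smallness of the local Lipschitz constant, together with the precise calibration of $(s,p,\varrho,\theta)$, is what makes the Sobolev multiplier estimates close the argument; without either of these the change of variables would lose too much regularity to recover maximal $W^{s,p}$ control.
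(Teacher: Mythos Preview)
The paper does not actually prove this theorem: it is stated as a quotation of \cite[Theorem~3.1]{Br} and no argument is given in the present text. Your outline is a reasonable sketch of the strategy that is carried out in that reference (localization via \ref{A1}--\ref{A3}, flattening by a Maz'ya--Shaposhnikova type extension, half-space Stokes estimates, transport back controlled through the Sobolev multiplier norms \eqref{eq:MSa}--\eqref{eq:MSb} and \eqref{lem:9.4.1}, and absorption of the top-order error using the smallness of the local Lipschitz constant). So there is nothing to compare against in the paper itself; if you want to verify the details you should consult \cite{Br} directly, where in particular the role of the condition $3(\tfrac1p-\tfrac12)+1\le s$ and the precise pairing of $(\varrho,p,s)$ with the multiplier criteria is made explicit.
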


\begin{remark}\label{rem:stokes}
{\rm
In Section \ref{sec:reg} we have to apply Theorem \ref{thm:stokessteady} to the domain $\mathcal O={\Omega}_{\eta(t)}$ for a fixed $t$.
Here $\eta:I\times\omega\rightarrow\R$ and $\partial\Omega_{\eta(t)}$ is parametrised via the function $\bfvarphi_{\eta(t)}$ defined on $\omega$, see Section \ref{ssec:geom} for details.
 We exclude self-intersection and degeneracy by assumption (in particular, $\partial_1\bfvarphi_\eta\times\partial_2\bfvarphi_\eta\neq 0$ such that $\bfn_{\eta(t)}$ is well-defined). Given $\bx_0\in \partial{\Omega}_{\eta(t)}$, for some $t\in I$ fixed, we rotate the coordinate system such that $\bfn_{\eta(t)}(\by(\bx_0))=(0,0,1)^\intercal$. Accordingly, it holds
\begin{align*}\mathrm{det}\big(\naby\widetilde\bfvarphi_{\eta(t)}\big)=1,\quad\widetilde\bfvarphi_\eta=\begin{pmatrix}\varphi^1_\eta\\\varphi^2_\eta\end{pmatrix}.
\end{align*}
Hence the function $\widetilde\bfvarphi_{\eta(t)}$ is invertible in a neighborhood $\mathcal U$ of $\by(\bx_0)$. We define in
$\widetilde\bfvarphi_{\eta(t)}(\mathcal U)$ the function
\begin{align*}
\bfphi(z)
=\begin{pmatrix}z\\ \phi(z))\end{pmatrix}
=\begin{pmatrix}z\\ \varphi^3_{\eta(t)}((\widetilde\bfvarphi_{\eta(t)})^{-1}(z))\end{pmatrix}.
\end{align*}
It describes the boundary $\partial{\Omega}_{\eta(t)}$ close to $\bx_0$.
One easily checks that $\partial_z\phi(z_0)=0$ such that $\partial_z\phi$ is small close to $z_0$. Suppose now that $\eta\in L^\infty(\overline{I};B^{\theta}_{\varrho,p}\cap C^{1}(\omega))$, $\theta>2-1/p$, where $p$ and $\varrho$ are related to \eqref{eq:MSa} and  \eqref{eq:MSb} and that $\displaystyle\sup_I \mathrm{Lip}(\partial\Omega_{\eta(t)})$ is sufficiently small. Then we conclude that 
\begin{align*}
\|\phi\|_{\mathcal M(W^{2-1/p,p}_\by)}\leq \delta,\quad \|\phi\|_{W^{1,\infty}_\by}\leq \delta,
\end{align*}
holds uniformly in time for some sufficiently small $\delta$ in a neighbourhood of $z_0$ (using also $\phi(z_0)=0$ and $\partial_z\phi(z_0)=0$).
 }
\end{remark}

\subsection{Universal Bogovskij}

Bogovskij operators are natural to be considered in star-shaped domains. As Lipschitz domains are unions of star-shaped domains, for some time Bogovskij operators are available on Lipschitz domains. Recently the concept of universal Bogovskij operators was introduced in \cite{KamSchSpe20}. Observe that the same Bogovskij operator actually can be used for a family of domains, as long as the Lipschitz constant is controlled. This allows to use a (locally) steady operator to correct the divergence in the time-changing domains.

More precisely in \cite[Corollary 3.4]{KamSchSpe20} the following statement was shown:
\begin{theorem} \label{thm:ndBog1}
	Let $\Sigma \subset \R^{n-1}$ be a bounded Lipschitz-domain, $M > \gamma > 0$, $C_L>0$, $b\in C_0^\infty(\Sigma \times [0,\gamma])$ with unit integral. Then there exists a linear, universal Bogovskij operator $\Bog: C_0^\infty(\Sigma \times [0,M]) \to C_0^\infty(\Sigma \times [0,M];\R^{n-1})$ such that for any $C_L$-Lipschitz function (i.e. with Lipschitz constant $C_L$) $\eta:\Sigma \to [\gamma,M]$ and $\Omega_\eta := \{(\bx',x_n) \in \Sigma \times [0,M]: 0< x_n < \eta(\bx')\}$ the operator $\Bog$ maps $C_0^\infty(\Omega_\eta)$ to $C_0^\infty(\Omega_\eta;\R^n)$ with $\Div\Bog f = f - b \int f \dx$. 
	In addition,
	\[\norm{\Bog(f)}_{W^{s+1,p}( \Omega_\eta;\R^n )} \leq C_B^{s,p}\norm{f}_{W^{s,p}( \Omega_\eta)},\]
	for all $ 1<p<\infty $ and $ s \ge 0$
	with $C_B^{s,p}$ only depending on $s$, $p$, $\operatorname{diam}(\Sigma), C_L,\gamma $ and the Lipschitz properties of $\Sigma$. %
\end{theorem}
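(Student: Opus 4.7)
The essential geometric point is that every admissible $\Omega_\eta$ is sandwiched between the two fixed cylinders $\Sigma\times[0,\gamma]$ (where $b$ is concentrated) and $\Sigma\times[0,M]$. My plan is therefore to build the operator by hand, using only objects attached to these two fixed cylinders plus a pre-chosen Bogovskij operator $\Bog_\Sigma$ on the Lipschitz base $\Sigma$; the graph $\eta$ will then enter only through the support of the input $f$, which is what yields universality for free.

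\textbf{Construction.} Given $f\in C_0^\infty(\Sigma\times[0,M])$, set $F:=f-b\int f\dx$, so that $\int F\,\mathrm{d}\bx=0$. Fix once and for all a cutoff $\psi\in C_0^\infty((0,\gamma))$ with $\int_0^\gamma\psi\,\mathrm{d}s=1$, and a classical Bogovskij operator $\Bog_\Sigma$ on the Lipschitz base $\Sigma$. Define the $\eta$-blind vertical average
\[
g(\bx'):=\int_0^M F(\bx',x_n)\,\mathrm{d}x_n,\qquad\bx'\in\Sigma,
\]
and set
\[
\Bog(f)(\bx',x_n):=\Bigl(\psi(x_n)\,\Bog_\Sigma(g)(\bx'),\;\int_0^{x_n}\bigl[F(\bx',s)-\psi(s)\,g(\bx')\bigr]\,\mathrm{d}s\Bigr).
\]
A direct differentiation gives $\Div\Bog(f)=F=f-b\int f\dx$, which is the required algebraic identity, and linearity is obvious.

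\textbf{Support and universality.} If $f\in C_0^\infty(\Omega_\eta)$, then so is $F$. Since $F(\bx',\cdot)$ is supported in $[0,\eta(\bx')]$ with $\eta(\bx')\geq\gamma$, one has $g(\bx')=\int_0^{\eta(\bx')}F\,\mathrm{d}s$ and $\int_\Sigma g\,\mathrm{d}\bx'=\int F\,\mathrm{d}\bx=0$, so $\Bog_\Sigma(g)$ is well-defined and vanishes on $\partial\Sigma$. The vertical component of $\Bog(f)$ vanishes at $x_n=0$ trivially, and at $x_n=\eta(\bx')$ because $\int_0^{\eta(\bx')}\psi\,\mathrm{d}s=\int_0^\gamma\psi\,\mathrm{d}s=1$; the horizontal component vanishes for $x_n\geq\gamma$ by $\operatorname{supp}\psi\subset(0,\gamma)$ and on $\partial\Sigma$ by the boundary property of $\Bog_\Sigma$. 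Hence $\Bog(f)\in C_0^\infty(\Omega_\eta;\R^n)$ for every admissible $\eta$, with \emph{the same} operator on the left; this is the universality statement.

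\textbf{Regularity and the main obstacle.} The horizontal part is controlled by the classical Bogovskij bound on $\Sigma$ combined with Minkowski's inequality, which yields $\|g\|_{W^{s,p}(\Sigma)}\lesssim\|f\|_{W^{s,p}(\Sigma\times[0,M])}$; the vertical part uses that the one-dimensional integration $\int_0^{x_n}$ gains one derivative in $W^{s,p}_{x_n}$ on the bounded interval $[0,M]$, combined with Fubini. The main obstacle is to transfer these estimates, which live naturally on the flat cylinder, to the rough graph domain $\Omega_\eta$ with a constant depending only on $C_L$. This hinges on the fact that $\Bog(f)$ and all intermediate objects vanish outside $\Omega_\eta$: for integer $s$ the $W^{s,p}$-norms on $\Omega_\eta$ and on $\Sigma\times[0,M]$ then coincide, while for non-integer $s$ one invokes the standard restriction/extension theory for $W^{s,p}$ on uniformly Lipschitz graph domains, whose norms depend only on $\operatorname{diam}(\Sigma)$, $\gamma$, and the Lipschitz constant $C_L$.
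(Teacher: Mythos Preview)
There is a genuine gap in the regularity argument for the vertical component. The operation $V(\bx',x_n)=\int_0^{x_n}h(\bx',s)\,\mathrm{d}s$ gains one derivative only in the $x_n$-direction, not isotropically: for $|\alpha|=s+1$ the pure tangential derivative $D^\alpha_{\bx'}V=\int_0^{x_n}D^\alpha_{\bx'}h\,\mathrm{d}s$ involves derivatives of $F$ of order $s+1$, which are not controlled by $\|F\|_{W^{s,p}}$. A concrete failure already occurs at $s=0$: take $f=F=\phi(\bx')\chi(x_n)$ with $\phi\in C_0^\infty(\Sigma)$, $\int_\Sigma\phi=0$, and $\chi\in C_0^\infty((0,M))$; then $g=c\phi$ with $c=\int\chi$, and $V(\bx',x_n)=\phi(\bx')\Psi(x_n)$ with $\Psi(x_n)=\int_0^{x_n}(\chi(s)-c\psi(s))\,\mathrm{d}s\not\equiv 0$ for generic $\chi$. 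Hence $\|\nabla_{\bx'}V\|_{L^p}/\|F\|_{L^p}$ is comparable to $\|\nabla_{\bx'}\phi\|_{L^p}/\|\phi\|_{L^p}$, which is unbounded as $\phi$ oscillates. Your divergence identity and the support/universality verifications are clean and correct, but your operator does \emph{not} satisfy $\|\Bog(f)\|_{W^{s+1,p}}\lesssim\|f\|_{W^{s,p}}$.

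The paper does not prove this statement itself but quotes it from \cite[Corollary~3.4]{KamSchSpe20}. That construction is built on the classical Bogovskij singular integral formula on domains star-shaped with respect to a fixed ball --- a Calder\'on--Zygmund type operator that gains one full derivative in \emph{all} directions --- combined with a partition-of-unity localisation. Universality comes for free because the Bogovskij kernel depends only on the fixed ball, not on the ambient domain, and the uniform Lipschitz bound $C_L$ together with $\eta\geq\gamma$ ensures that the relevant local pieces of $\Omega_\eta$ are star-shaped with respect to $\eta$-independent balls lying in $\Sigma\times(0,\gamma)$. The isotropic regularity gain of that singular integral is exactly what your anisotropic vertical primitive lacks.
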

In order to make this operator admissible for our needs we introduce the following version. 
\begin{theorem} \label{thm:ndBog}
	There is a universal Bogovskij operator, such that for all $\Omega_\eta$ defined through \eqref{eq:2612} with $\norm{\nabla \eta}_\infty\leq C_L$, $\norm{\eta}_\infty\leq L$ and $b\in C_0^\infty(\Omega\setminus S_L)$ with unit integral
	\[
	\Bog: C_0^\infty(\Omega_\eta) \to C_0^\infty(\Omega_\eta;\R^n)\text{ with }\Div\Bog f = f - b \int f \dx.
	\]
	In addition,
	\[\norm{\Bog(f)}_{W^{s+1,p}( \Omega_\eta;\R^n )} \leq C_B^{s,p}\norm{f}_{W^{s,p}( \Omega_\eta)},\]
	for all $ 1<p<\infty $ and $ s \ge 0$
	with $C_B^{s,p}$ only depending on $L,\varphi,C_L$.
\end{theorem}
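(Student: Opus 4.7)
The plan is to reduce, by means of fixed local charts on $\partial\Omega$, to the graph setting treated in Theorem \ref{thm:ndBog1}, and then to assemble a global operator from the local ones via a standard partition-of-unity argument. The key point of Theorem \ref{thm:ndBog1} is that a single operator serves an entire family of graph domains whose Lipschitz constants are uniformly bounded; after the reduction this delivers the $\eta$-universality claimed here.

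First I would fix, independently of $\eta$, a finite covering $\mathcal U^1,\ldots,\mathcal U^\ell$ of $\partial\Omega$ as in \ref{A1}--\ref{A3}, together with an interior set $\mathcal U^0\subset\subset\Omega\setminus S_L$, chosen so that $\{\mathcal U^j\}_{j=0}^\ell$ still covers $\overline{\Omega_\eta}$ for every admissible $\eta$ (which is possible because $\|\eta\|_\infty\leq L$). I would then fix a partition of unity $\{\chi_j\}_{j=0}^\ell$ subordinate to this cover and a chain of connecting bumps $b_j\in C_0^\infty(\mathcal U^{j-1}\cap\mathcal U^j\cap(\Omega\setminus S_L))$ of unit integral, starting from $b_0:=b$. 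All of this combinatorial data is $\eta$-independent by construction, and in particular sits in a region contained in every admissible $\Omega_\eta$.

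Second, for each $j\geq 1$ I would pass to the straightened coordinates $\bz=\mathcal Q_j^\intercal(\bx-\by^j)$, after which $\partial\Omega\cap\mathcal U^j$ is the graph of $\varphi_j$. By an implicit-function argument, using the smoothness of $\bfvarphi_j$ and $\bfn$ together with the non-degeneracy \eqref{eq:1705}, $\partial\Omega_\eta\cap\mathcal U^j$ is also the graph of a single Lipschitz function $\tilde\eta_j$ over a fixed base $\Sigma_j$. The Lipschitz constant and the uniform upper/lower bounds of $\tilde\eta_j$ can then be controlled exclusively in terms of $L, C_L$ and the fixed geometry of $\partial\Omega$. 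In these coordinates $\mathcal U^j\cap\Omega_\eta$ is precisely a domain of the form treated in Theorem \ref{thm:ndBog1}, and the theorem produces a universal local Bogovskij operator $\Bog_j$ there, with reference bump $\tilde b_j$ chosen so that its pullback by the rotation/translation is the prescribed $b_j$. The analogous construction in the interior chart $\mathcal U^0$ is standard, as the geometry there is frozen.

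Finally, given $f\in C_0^\infty(\Omega_\eta)$, I would assemble $\Bog f=\sum_{j=0}^\ell \bv_j$ by the classical iterated Bogovskij decomposition: one distributes $f$ through the charts using the $\chi_j$ and adjusts the accumulated mean at each step via the overlap bumps $b_{j+1}$, so that the argument fed into each $\Bog_j$ has zero integral on $\mathcal U^j\cap\Omega_\eta$ (which suffices for the local operator to act on its natural class) while the only residual correction is absorbed by $b_0=b$. Summing the local contributions yields $\Div\Bog f=f-b\int f\dx$, and the Sobolev estimates follow from the uniform bounds of Theorem \ref{thm:ndBog1} combined with the fixed smoothness of $\{\chi_j\}$ and $\{b_j\}$, with constants depending only on $L,\varphi,C_L$. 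The main technical obstacle is the second step: one must verify that the graph function $\tilde\eta_j$ is Lipschitz with a norm controlled purely by $L, C_L$ and the fixed chart data, which rests on \eqref{eq:1705} and hence on the smallness assumption $\|\eta\|_\infty\leq L$; once this is in place, everything else is a routine, $\eta$-independent partition-of-unity assembly.
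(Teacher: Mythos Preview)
Your proposal is correct and follows essentially the same approach as the paper: cover a neighbourhood of $\partial\Omega$ by fixed local charts in which every admissible $\partial\Omega_\eta$ is a Lipschitz graph with uniformly controlled constant, apply Theorem~\ref{thm:ndBog1} in each chart, and glue via a partition-of-unity argument. The paper merely sketches this and cites \cite[Section~3.1]{SaaSch21} for the assembly, whereas you spell out the chain-of-bumps iteration explicitly, but the strategy is the same.
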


\begin{proof}
	The proof is by now standard. One covers the domain $S_L$ with balls of finite overlap, such that on each ball all possible functions $\eta$ can be written as a graph. On these sets one may apply Theorem~\ref{thm:ndBog1}. Hence the partition of unity argument introduced in~\cite[Section 3.1]{SaaSch21} allows to construct the desired operator.
\end{proof} 

\begin{remark}[Time-derivative of Bogovskij operator]\label{time-bog}
{\rm Please observe that the same operator can be applied to a time-changing function with time-changing support. The operator then automatically has zero trace on the variable support of the function. In particular for $\sup_t\norm{\nabla \eta(t)}_\infty\leq C_L$, $\sup_t\norm{\eta(t)}_\infty\leq L$, we find that
 $\partial_t \mathcal{B}(f\chi_{\Omega_\eta})=\mathcal{B} (\partial_tf\chi_{\Omega_\eta})$ and $\mathcal{B}(\partial_t f\chi_{\Omega_\eta})=0$ on $\partial\Omega_\eta$.
}
\end{remark}

\section{Local strong solutions}
\label{sec:loc}
Our goal in this section is to construct a local-in-time strong solution of \eqref{1}--\eqref{interfaceCond}. The main theorem is the following:
\begin{theorem}
\label{thm:fluidStructureWithoutFK}
Suppose that the dataset
$(\bff, g, \eta_0, \eta_*, \bu_0)$
satisfies \eqref{dataset} and in addition
\begin{align}
\label{datasetImproved}
\bff\in L^2(I; L^2_{\rm loc}(\mathbb{R}^{3})), \quad
g\in L^2(I; L^2(\omega)),\quad \eta_0 \in W^{3,2}(\omega), \quad \eta_* \in W^{1,2}(\omega), \quad \bu_0 \in W^{1,2}_{\divx}(\Omega_{\eta_0}).
\end{align}
There is a time $T^*>0$ such that there exists a unique strong solution to \eqref{1}--\eqref{interfaceCond} in the sense of Definition \ref{def:strongSolution}.
\end{theorem}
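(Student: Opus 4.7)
The plan is to construct the strong solution via a Banach fixed-point argument over a short interval $[0,T^*]$, adapting the strategy used for plates in \cite{CS, DRR} to the curved reference configuration (as was done for 2D shells in \cite{Br, GraHil}).

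First, I would pull the system back to the fixed reference domain $\Omega$ via the Hanzawa map $\bm{\Psi}_\eta$ defined in \eqref{map}. Setting $\hat{\bu}=\bu\circ\bm{\Psi}_\eta$ and $\hat{\pi}=\pi\circ\bm{\Psi}_\eta$, the Navier--Stokes system \eqref{2} becomes a system with variable coefficients built from $\nabla\bm{\Psi}_\eta$ and $\det\nabla\bm{\Psi}_\eta$. Solenoidality is preserved through the Piola transform $\mathscr{T}_\eta$ from Section \ref{sec:ext}; any residual divergence produced by the linearisation is corrected by the universal Bogovskij operator of Theorem \ref{thm:ndBog}, which is compatible with the moving support by Remark \ref{time-bog}. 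The coupling reduces to $\hat{\bu}|_{\partial\Omega}=\partial_t\eta\,\bfn$ on $\omega$.

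Next I would linearise: given $\tilde\eta$ in a closed ball
\[
\mathcal{X}_{R,T} := \bigl\{\tilde\eta\in Y_T:\ \tilde\eta(0)=\eta_0,\ \partial_t\tilde\eta(0)=\eta_*,\ \|\tilde\eta\|_{Y_T}\le R,\ \|\tilde\eta\|_{L^\infty_{t,\by}}<L\bigr\},
\]
where $Y_T$ collects the regularity of Definition \ref{def:strongSolution}, I would freeze the geometric coefficients at $\tilde\eta$ and solve the resulting linear coupled system by Galerkin approximation. Two estimates close the argument. The basic energy estimate, obtained by testing the fluid equation with $\hat{\bu}$ and the shell equation with $\partial_t\eta$, has its interface contributions cancelled by the kinematic coupling. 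The acceleration estimate, obtained by testing the time-differentiated fluid equation with $\partial_t\hat{\bu}$ and the shell equation with $\partial_t^2\eta$, uses the viscous shell term $-\gamma\partial_t\Dely\eta$ to control the boundary contribution from $\nabla\partial_t\hat\bu$; combined with the steady Stokes theory of Theorem \ref{thm:stokessteady} applied on $\Omega_{\tilde\eta(t)}$ at a.e.\ $t$, this upgrades the regularity to that required in Definition \ref{def:strongSolution}, including $\hat\pi\in L^2_tW^{1,2}_\bx$. Theorem \ref{thm:stokessteady} is applicable because $\eta_0\in W^{3,2}_\by\hookrightarrow C^1_\by$ and, for sufficiently small $T^*$, $\sup_I\mathrm{Lip}(\partial\Omega_{\tilde\eta(t)})$ stays as small as needed by Remark \ref{rem:stokes}.

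Define $\mathcal{F}:\tilde\eta\mapsto\eta$ from the solution of the linearised problem. For $R$ large (depending on the data) and $T^*=T^*(R)$ small, self-mapping $\mathcal{F}(\mathcal{X}_{R,T^*})\subset \mathcal{X}_{R,T^*}$ follows by interpolation and the short-time smallness of $\tilde\eta-\eta_0$ in the sub-critical norms. Contraction in a weaker metric, say $C(\overline{I};W^{2,2}_\by)\cap W^{1,\infty}(I;L^2_\by)$ for the structure paired with $L^\infty_tL^2_\bx\cap L^2_tW^{1,2}_\bx$ for the velocity, follows by subtracting two linear problems and using the difference bounds \eqref{211and213}--\eqref{218} together with the Sobolev-multiplier estimates \eqref{eq:MSa}--\eqref{eq:MSb}. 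Banach's fixed-point theorem then gives a unique $\eta$ and hence a unique strong solution; uniqueness in the strong class is a by-product of the same contraction estimate.

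The main obstacle, relative to the flat reference case \cite{CS, DRR}, is the curved geometry. The boundary stress term in \eqref{1} is not simply a normal trace but the product $\bfn^\intercal\bm{\tau}\circ\bm{\varphi}_\eta\bfn_\eta|\det(\naby\bm{\varphi}_\eta)|$, and the displacement is along the spatially varying normal $\bfn(\by)$; controlling these quantities in the high regularity norms required to close the fixed-point argument (uniformly over $\tilde\eta\in\mathcal{X}_{R,T^*}$) forces one to use the full Sobolev-multiplier machinery of Section \ref{sec:prelim} together with the Hanzawa bounds \eqref{210and212}--\eqref{218}. Moreover, the applicability of Theorem \ref{thm:stokessteady} is only guaranteed under a smallness condition on the local Lipschitz constant of $\partial\Omega_{\tilde\eta(t)}$, which must be propagated through the iteration; this couples the admissible $T^*$ to the $C^1$-size of $\eta_0$ in a delicate nonlinear way, and it is at this step that the curved geometry makes the argument essentially harder than in the plate case.
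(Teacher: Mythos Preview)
Your overall architecture—transform to the reference domain, linearise, derive a priori estimates via Galerkin, close by a Banach fixed point on a short interval—is the same as the paper's, but several of your technical choices diverge and one creates a gap. The paper freezes the geometric coefficients at the \emph{initial} displacement $\eta_0$, not at a running $\tilde\eta$; the resulting $\mathbf{A}_{\eta_0},\mathbf{B}_{\eta_0},J_{\eta_0}$ are time-independent, and this is used explicitly to justify the test function $(\partial_t^2\eta,\partial_t\overline{\bu})$ at the Galerkin level. Neither the steady Stokes Theorem~\ref{thm:stokessteady}, the Piola transform, nor the Bogovskij operator is used anywhere in Section~\ref{sec:loc}: the linear problem is transported to the fixed smooth domain $\Omega_{\eta_0}$ and closed by classical maximal regularity for the \emph{unsteady} Stokes system \cite{solonnikov1977}. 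Your alternative of time-differentiating the fluid equation and then applying steady Stokes would require regularity on $\bff$ (and compatibility at $t=0$) beyond \eqref{datasetImproved}. The paper's contraction is also carried out directly in the strong norm $X_{I_*}$, not in a weaker metric.

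The genuine gap is the fixed-point variable. You iterate on $\tilde\eta$ alone and then ``solve the resulting linear coupled system'', but the convective nonlinearity $(\bu\cdot\nabla)\bu$ must be linearised around some velocity $\tilde{\bu}$ that your map does not carry. The paper's fixed point acts on the full triple $(\zeta,\overline{\mathbf{w}},\overline{q})\in B_R^{X_{I_*}}$, with the linear problem \eqref{contEqAloneBarFixed}--\eqref{momEqAloneBarFixed} driven by $h_\zeta(\overline{\mathbf{w}}),\,\mathbf{h}_\zeta(\overline{\mathbf{w}}),\,\mathbf{H}_\zeta(\overline{\mathbf{w}},\overline{q})$; the smallness for self-mapping and contraction comes from factors $\|\eta_0-\zeta\|$ and $T_*^\alpha$ produced by the embeddings \eqref{eq:emb1}--\eqref{eq:emb3} together with \eqref{210and212}--\eqref{218}. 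If you insist on iterating only on the structure, you would need, for each $\tilde\eta$, a separate local well-posedness theory for the nonlinear Navier--Stokes system in the prescribed moving domain $\Omega_{\tilde\eta}$, which your proposal does not supply.
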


The main ideas to prove Theorem \ref{thm:fluidStructureWithoutFK} are as follows.
\begin{itemize}
\item We transform the fluid-structure system to its reference domain.
\item We then linearize the resulting system on the reference domain and obtain estimates for the linearized system.
\item We construct a contraction map for the linearized problem (by choosing the end time
small enough) which gives the local solution to the system on its original/actual domain.
\end{itemize}
This is reminiscent of the approach in \cite{Br,GraHil,GraHilLe,Le}.

\subsection{Transformation to reference domain}\label{Sectionlinear}
For a solution $( \eta, \bu,  \pi )$  of \eqref{1}--\eqref{interfaceCond}, we set $\overline{\pi}=\pi\circ \Psi_\eta$ and 
$\overline{\bu}=\bu\circ \Psi_\eta$
and define
\begin{equation*}\label{matrices}
\begin{aligned}
\mathbf{A}_\eta=J_\eta\big( \nabx \bfPsi_\eta^{-1}\circ \bfPsi_\eta \big)^\intercal\nabx \bfPsi_\eta^{-1}\circ \bfPsi_\eta,&\\
\mathbf{B}_\eta=J_\eta \left(\nabx \bfPsi_\eta^{-1}\circ \bfPsi_\eta\right)^\intercal,&\\
h_\eta(\overline{\bu})=\big( \mathbf{B}_{\eta_0}-\mathbf{B}_\eta\big):\nabx \overline{\bu},&
\\
\mathbf{H}_\eta(\overline{\bu}, \overline{\pi})
=\
\big( \mathbf{A}_{\eta_0}-\mathbf{A}_\eta\big)\nabx \overline{\bu}
-
\big( \mathbf{B}_{\eta_0}-\mathbf{B}_\eta\big) \overline{\pi},&
\\
\mathbf{h}_\eta(\overline{\bu})
=
(J_{\eta_0}-J_\eta)\partial_t \overline{\bu}
-
J_\eta \nabx\overline{\bu}\cdot \partial_t \bfPsi_\eta^{-1}\circ \bfPsi_\eta -\mathbf{B}_\eta\nabx\overline{\bfv}~\overline{\bu}
+
J_\eta  \bff \circ \bfPsi_\eta,
\end{aligned}
\end{equation*}
where $J_\eta=\mathrm{det}(\nabla\bfPsi_\eta)$.
Exactly as in the two-dimensional case considered in  \cite[Lemma 4.2]{Br} we obtain the following result.
\begin{theorem}
\label{thm:transformedSystem}
Suppose that the dataset
$(\bff, g, \eta_0, \eta_*, \bu_0)$
satisfies \eqref{dataset} and \eqref{datasetImproved}.
Then $( \eta, \bu,  \pi )$ is a strong solution to \eqref{1}--\eqref{interfaceCond} in the sense of Definition \ref{def:strongSolution}, if and only if $( \eta, \overline{\bu},  \overline{\pi} )$ is a strong solution of
\begin{align}
\label{contEqAloneBar}
\mathbf{B}_{\eta_0}:\nabx \overline{\bu}= h_\eta(\overline{\bu}),
\\
\partial_t^2\eta - \partial_t\Dely \eta + \Dely^2\eta
=
g+\bn^\intercal \big[\mathbf{H}_\eta(\overline{\bu}, \overline{\pi})-\mathbf{A}_{\eta_0}  \nabx\overline{\bu} +\mathbf{B}_{\eta_0}\overline{\pi}\big]\circ\bm{\varphi} \bn ,
\label{shellEqAloneBar}
\\
J_{\eta_0}\partial_t \overline{\bu}  -\divx(\mathbf{A}_{\eta_0}  \nabx\overline{\bu}) 
 +\divx(\mathbf{B}_{\eta_0}\overline{\pi}) 
 = 
\mathbf{h}_\eta(\overline{\bu})-
\divx  \mathbf{H}_\eta(\overline{\bu}, \overline{\pi})
\label{momEqAloneBar}
\end{align}
with  $\overline{\bu}  \circ \bm{\varphi}  =(\partial_t\eta)\bn$ on $I\times \omega$.
\end{theorem}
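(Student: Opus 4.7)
The statement is an equivalence: the transformed triple $(\eta,\overline{\bu},\overline{\pi})$ is a strong solution of \eqref{contEqAloneBar}--\eqref{momEqAloneBar} on the fixed reference domain $\Omega$ if and only if the original triple $(\eta,\bu,\pi)$ with $\bu=\overline{\bu}\circ\bfPsi_\eta^{-1}$ and $\pi=\overline{\pi}\circ\bfPsi_\eta^{-1}$ is a strong solution of \eqref{1}--\eqref{interfaceCond} on $\Omega_\eta$. I would follow the template of \cite[Lemma 4.2]{Br}, which treats the 2D case: the argument is purely a chain-rule computation combined with Piola-type identities and it goes through verbatim in 3D once the appropriate Sobolev multiplier machinery is in place.

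The plan is to proceed in two steps. First, I would verify that the change of variables preserves the strong-solution regularity in both directions. Given the strong-solution regularity $\eta\in W^{1,\infty}(I;W^{1,2})\cap\cdots\cap L^2(I;W^{4,2}(\omega))$ of Definition~\ref{def:strongSolution}, the Hanzawa transform estimates \eqref{210and212} and \eqref{218} together with the multiplier bounds \eqref{eq:MSa}, \eqref{eq:MSb} and the composition rule \eqref{lem:9.4.1} ensure that $\bfPsi_\eta,\bfPsi_\eta^{-1},\partial_t\bfPsi_\eta$ belong to the required Sobolev-multiplier classes. Consequently $\bu\in W^{1,2}_tL^2_\bx\cap L^2_tW^{2,2}_\bx$ on $\Omega_\eta$ if and only if $\overline{\bu}$ is in the analogous space on $\Omega$, and similarly $\pi\in L^2_tW^{1,2}_\bx$ corresponds to $\overline\pi\in L^2_tW^{1,2}_\bx$. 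The matrices $\mathbf{A}_\eta,\mathbf{B}_\eta$ and Jacobian $J_\eta$ also inherit enough regularity for the right-hand sides $h_\eta,\mathbf{H}_\eta,\mathbf{h}_\eta$ to be interpreted distributionally in the stated spaces.

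Second, I would transform each of the three PDEs pointwise. For the continuity equation, applying the chain rule $\nabla\bu=\nabla\overline{\bu}\circ\bfPsi_\eta^{-1}\cdot\nabla\bfPsi_\eta^{-1}$ and multiplying by $J_\eta$ yields $\mathbf{B}_\eta:\nabla\overline{\bu}=0$ on $\Omega$; adding and subtracting $\mathbf{B}_{\eta_0}:\nabla\overline{\bu}$ gives \eqref{contEqAloneBar}. For the momentum equation, I would multiply \eqref{1'} by $J_\eta$ after pullback, use the chain-rule identities
\[
\partial_t\bu\circ\bfPsi_\eta=\partial_t\overline{\bu}+\nabla\overline{\bu}\cdot(\partial_t\bfPsi_\eta^{-1})\circ\bfPsi_\eta,\qquad (\bu\cdot\nabla)\bu\circ\bfPsi_\eta=\big(\mathbf{B}_\eta^{-\intercal}\overline{\bu}\big)\cdot\nabla\overline{\bu},
\]
and the Piola identity $\diver(J_\eta(\nabla\bfPsi_\eta^{-1}\circ\bfPsi_\eta)^\intercal M)=J_\eta\,(\diver_\bx M)\circ\bfPsi_\eta$ applied to $M=\nabla\bu$ and $M=\pi\mathbb{I}$. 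This produces a $\bfPsi_\eta$-coefficient form, which is then rewritten as a $\bfPsi_{\eta_0}$-coefficient operator (contributing the linear terms $-\diver(\mathbf{A}_{\eta_0}\nabla\overline{\bu})+\diver(\mathbf{B}_{\eta_0}\overline{\pi})$) plus the error terms gathered into $\mathbf{h}_\eta(\overline{\bu})$ and $-\diver\mathbf{H}_\eta(\overline{\bu},\overline{\pi})$, exactly reproducing \eqref{momEqAloneBar}. The shell equation transforms similarly: the trace of the Cauchy stress evaluated at $\bfvarphi_\eta$ becomes the trace of $\mathbf{A}_\eta\nabla\overline{\bu}-\mathbf{B}_\eta\overline{\pi}$ at $\bfvarphi$ (the factor $|\det\nabla_\by\bfvarphi_\eta|$ is absorbed into the $\mathbf{A}_\eta,\mathbf{B}_\eta$ matrices), and splitting into $\eta_0$-reference parts and perturbations yields \eqref{shellEqAloneBar}. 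The kinematic coupling $\bu\circ\bfvarphi_\eta=\partial_t\eta\,\bn$ is equivalent to $\overline{\bu}\circ\bfvarphi=\partial_t\eta\,\bn$ because $\bfPsi_\eta\circ\bfvarphi=\bfvarphi_\eta$ by construction.

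The main obstacle is bookkeeping rather than conceptual: one must make sure that every nonlinear term $(\bu\cdot\nabla)\bu$, every pressure contribution, and the boundary stress on the shell are transformed consistently so that the final arrangement into ``linear constant-coefficient operator plus perturbation'' collects exactly the expressions defined for $h_\eta,\mathbf{H}_\eta,\mathbf{h}_\eta$ with the sign conventions above. Since each of these steps is a direct rewriting supported by the multiplier and composition lemmas already collected in Section~\ref{ssec:geom}, and since the 2D version in \cite[Lemma 4.2]{Br} is explicit, this can be executed without any essentially new ingredient.
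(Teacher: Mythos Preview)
Your proposal is correct and matches the paper's approach exactly: the paper does not give an independent proof but simply states that the result follows ``exactly as in the two-dimensional case considered in \cite[Lemma 4.2]{Br}'', which is precisely the template you invoke. Your outline of the chain-rule/Piola bookkeeping and the regularity transfer via the multiplier estimates \eqref{210and212}--\eqref{218}, \eqref{eq:MSa}--\eqref{lem:9.4.1} is an accurate expansion of what that citation entails.
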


\subsection{The linearized problem}
In this section, we let $(g, \eta_0, \eta_*, \bu_0)$ be as before in Theorem \ref{thm:transformedSystem}. In addition, we consider $(h,\mathbf{h}, \mathbf{H})$ such that 
\begin{equation}
\begin{aligned}
\label{differentHdata}
&h\in L^2\big(I;W^{1,2}(\Omega)\big) \cap W^{1,2}\big(I;W^{-1,2}(\Omega)\big)\cap \{h(0,\bx)=0\},
\\
&\mathbf{h} \in L^2(I\times\Omega),\quad \mathbf{H}\in L^2(I;W^{1,2}(\Omega)),
\end{aligned}
\end{equation}
and study the following linear system
\begin{align}
\label{contEqAloneBarLinear}
\mathbf{B}_{\eta_0}:\nabx \overline{\bu}= h,
\\
 \partial_t^2\eta - \partial_t\Dely \eta +  \Dely^2\eta
=
g+\bn^\intercal \big[\mathbf{H} -\mathbf{A}_{\eta_0}  \nabx\overline{\bu} +\mathbf{B}_{\eta_0}\overline{\pi}\big]\circ\bm{\varphi} \bn ,
\label{shellEqAloneBarLinear}
\\
J_{\eta_0}\partial_t \overline{\bu}  -\divx(\mathbf{A}_{\eta_0}  \nabx\overline{\bu}) 
 +\divx(\mathbf{B}_{\eta_0}\overline{\pi}) 
 = 
\mathbf{h}-
\divx  \mathbf{H} 
\label{momEqAloneBarLinear}
\end{align}
with  $\overline{\bu}  \circ \bm{\varphi}  =(\partial_t\eta)\bn$ on $I\times \omega$. It is important to note that $\mathbf{B}_{\eta_0}$ and $\mathbf{A}_{\eta_0} $ are time-independent.

\begin{proposition}
\label{thm:transformedSystemLinear}
Suppose that the dataset
$(g, \eta_0, \eta_*, \overline{\bu}_0, h, \mathbf{h},\mathbf{H})$
satisfies \eqref{dataset}, \eqref{datasetImproved} and \eqref{differentHdata}.
Then there exists a strong solution $( \eta, \overline{\bu},  \overline{\pi} )$ of \eqref{contEqAloneBarLinear}--\eqref{momEqAloneBarLinear} such that
\begin{equation}
\begin{aligned}
\label{energyEstLinear}
&\sup_I\int_\omega
\big(\vert \partial_t\naby \eta\vert^2 
+
\vert \naby\Dely \eta\vert^2
\big)
\dy
+
\sup_I\int_\Omega\vert\nabx \overline{\bu}\vert^2\dx
\\&\quad+
\int_I\int_\omega
\big(\vert \partial_t\Dely \eta \vert^2 + \vert \partial_t^2 \eta\vert^2+| \Dely^2\eta|^2
 \big)\dy\dt
 +
\int_I\int_\Omega\big( \vert \nabx^2\overline{\bu}\vert^2 +\vert \partial_t\overline{\bu} \vert^2  +\vert \overline{\pi}\vert^2 + \vert \nabx \overline{\pi}\vert^2
 \big)\dx\dt
 \\&\lesssim
 \int_\omega\big( \vert \eta_*\vert^2
 +
 \vert \naby\eta_*\vert^2
 +
 \vert \Dely\eta_0\vert^2
 +
  \vert \naby\Dely\eta_0\vert^2
  \big)\dy
  +
  \int_\Omega \big(\vert
  \overline{\bu}_0\vert^2
  +
   \vert\nabx\overline{\bu}_0\vert^2 \big)\dx
   \\&\quad+
   \int_I\Vert \partial_t h \Vert_{W^{-1,2}(\Omega)}^2\dt
   +
 \int_I\int_\omega \vert g\vert^2 \dy\dt
  \\&\quad+
  \int_I\int_\Omega\big(
  \vert h\vert^2
  +
  \vert \nabx h\vert^2
  +
   \vert \mathbf{h}\vert^2 +
  \vert \mathbf{H}\vert^2
  +
  \vert \nabx\mathbf{H}\vert^2
  \big)
 \dx\dt.
\end{aligned}
\end{equation}
\end{proposition}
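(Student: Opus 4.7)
Since the system \eqref{contEqAloneBarLinear}--\eqref{momEqAloneBarLinear} is linear with time-\emph{independent} coefficients $\mathbf A_{\eta_0}, \mathbf B_{\eta_0}, J_{\eta_0}$, the plan is to run a Galerkin scheme adapted to the coupled solution space, derive a priori estimates at the level of the second-order energy appearing in \eqref{energyEstLinear}, and pass to the limit. The pressure and the $L^2 W^{4,2}$-regularity of $\eta$ will be recovered at the end using steady elliptic theory. First, one reduces to the case of a homogeneous constraint: using the Bogovskij-type operator from Theorem \ref{thm:ndBog} (or a variant adapted to the weighted divergence $\mathbf B_{\eta_0}\!:\!\nabla$, which is a time-independent, non-degenerate first-order operator on $\Omega$) I would lift $h$ to some $\bu_h$ with $\mathbf B_{\eta_0}\!:\!\nabla\bu_h=h$ and zero trace, enjoying the estimates $\|\bu_h\|_{L^\infty W^{1,2}}+\|\partial_t \bu_h\|_{L^2 L^2}\lesssim \|h\|_{L^2 W^{1,2}}+\|\partial_t h\|_{L^2 W^{-1,2}}$. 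The difference $\bar\bu-\bu_h$ then satisfies a homogeneous constraint (modulo absorbing a mean-value) with modified, controlled forcing.

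\textbf{Galerkin scheme and basic energy.} I would build a countable basis of the coupled space
\[ X=\{(\xi,\bfphi)\in W^{2,2}(\omega)\times W^{1,2}(\Omega): \mathbf B_{\eta_0}\!:\!\nabla\bfphi=0,\ \bfphi\circ\bfvarphi=\xi\bn\text{ on }\omega\}, \]
so that any Galerkin pair automatically satisfies the kinematic coupling. Testing the momentum equation with $\bar\bu$ and the shell equation with $\partial_t\eta$ makes the boundary stress terms cancel exactly because of the coupling, yielding the classical first-order energy bounds $\|\partial_t\eta\|_{L^\infty L^2}+\|\Delta_\by\eta\|_{L^\infty L^2}+\|\bar\bu\|_{L^\infty L^2}+\|\partial_t\nabla_\by\eta\|_{L^2 L^2}+\|\nabla\bar\bu\|_{L^2 L^2}$.

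\textbf{Second-order energy.} To reach the full estimate \eqref{energyEstLinear} I would test the time-differentiated momentum equation with $\partial_t\bar\bu$ and the time-differentiated shell equation with $\partial_t^2\eta$; both time derivatives still satisfy the coupling, so the interface terms pair. Integration by parts in time on the bending term $\Delta_\by^2\eta$ and the dissipative term $-\partial_t\Delta_\by\eta$ then yields $\|\partial_t\nabla_\by\eta\|_{L^\infty L^2}+\|\nabla_\by\Delta_\by\eta\|_{L^\infty L^2}+\|\partial_t\Delta_\by\eta\|_{L^2 L^2}+\|\partial_t^2\eta\|_{L^2 L^2}+\|\partial_t\bar\bu\|_{L^2 L^2}+\|\nabla\bar\bu\|_{L^\infty L^2}$, where the contribution of $\partial_t h$ is handled by duality using the $W^{-1,2}$-norm appearing in \eqref{energyEstLinear}. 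Once $\partial_t\bar\bu\in L^2 L^2$ is known, for almost every $t$ I view \eqref{momEqAloneBarLinear} as a steady Stokes system on the smooth fixed domain $\Omega$ with right-hand side in $L^2(\Omega)$ and boundary datum $(\partial_t\eta)\bn\in W^{1/2,2}(\partial\Omega)$ (provided by $\partial_t\eta\in L^2 W^{2,2}(\omega)$). Theorem \ref{thm:stokessteady} then gives $\|\nabla^2\bar\bu\|_{L^2 L^2}+\|\nabla\bar\pi\|_{L^2 L^2}+\|\bar\pi\|_{L^2 L^2}$; substituting the resulting stress into \eqref{shellEqAloneBarLinear} finally controls $\|\Delta_\by^2\eta\|_{L^2 L^2}$. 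Weak compactness and linearity close the passage to the Galerkin limit, and uniqueness follows by applying the same energy identity to the difference of two solutions.

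\textbf{Main obstacle.} The delicate point is Step 3: to justify the time-differentiated testing rigorously, the Galerkin basis must incorporate the coupling $\bfphi\circ\bfvarphi=\xi\bn$ so that both interface terms cancel at the discrete level. One must also track carefully which right-hand-side norms the calculation genuinely requires—in particular that only $\partial_t h\in L^2 W^{-1,2}$ (not $\partial_t\mathbf H$ or $\partial_t\mathbf h$) enters, which is obtained by integrating the $\partial_t$-derivative of the constraint against an appropriate pressure test and using the Bogovskij lift from the reduction step.
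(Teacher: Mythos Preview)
Your outline has the right architecture, but the key second-order step is stated in a way that would not work. Testing the \emph{time-differentiated} momentum equation with $\partial_t\bar\bu$ produces $\int\partial_t\mathbf h\cdot\partial_t\bar\bu$ and $\int\Div\partial_t\mathbf H\cdot\partial_t\bar\bu$ on the right, and neither $\partial_t\mathbf h$ nor $\partial_t\mathbf H$ is assumed in \eqref{differentHdata}. The paper instead tests the \emph{original} equations with the pair $(\partial_t^2\eta,\partial_t\bar\bu)$: since $\mathbf A_{\eta_0},\mathbf B_{\eta_0},J_{\eta_0}$ are time-independent, $\partial_t\bar\bu$ is admissible at the Galerkin level, and after integrating the pressure term by parts one uses $\mathbf B_{\eta_0}\!:\!\nabla\partial_t\bar\bu=\partial_t h$ to obtain exactly $\int\bar\pi\,\partial_t h$, which is where the $W^{-1,2}$-norm of $\partial_t h$ enters and why no time regularity of $\mathbf h,\mathbf H$ is required. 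Your Bogovskij reduction to $h=0$ could in principle achieve the same effect, but then $\partial_t\bu_h$ appears inside the modified $\mathbf h$, and for that you need $\mathcal B:W^{-1,2}\to L^2$, a negative-order estimate not furnished by Theorem~\ref{thm:ndBog}.

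There is a second gap: from the $\partial_t^2\eta$ test alone, the bending contribution $\int\Delta_\by^2\eta\,\partial_t^2\eta$ yields $\|\partial_t\Delta_\by\eta\|_{L^2L^2}$ and $\sup\|\nabla_\by\Delta_\by\eta\|_{L^2}$ with the \emph{wrong} sign (see the paper's computation \eqref{3.10}). The paper therefore adds a separate test of the shell equation with $-\partial_t\Delta_\by\eta$ to recover these quantities with good sign, estimating the boundary stress by the trace theorem and interpolation as in \eqref{findPressBar}. Finally, the argument is not sequential as you describe: the $(\partial_t^2\eta,\partial_t\bar\bu)$ test leaves $\|\bar\pi\|_{W^{1,2}}$ on the right, the $-\partial_t\Delta_\by\eta$ test leaves $\|\nabla\bar\bu\|_{W^{1,2}}$, and the Stokes step (the paper actually transforms back to $\Omega_{\eta_0}$ and invokes Solonnikov's parabolic maximal regularity rather than Theorem~\ref{thm:stokessteady}) feeds both back; all three estimates must be combined via a small-$\kappa$ absorption to close.
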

\begin{proof}
The solution can be constructed by a finite-dimensional Galerkin approximation.
 Let us explain how to construct the Galerkin basis. By solving  eigenvalue problems of the Stokes operator we construct a smooth orthogonal basis $(\tilde \bfX_\ell)_{\ell\in\N}$
of $W^{1,2}_{0,\Div}(\Omega_{\eta_0})$. Further by solving eigenvalue problems for the Laplace operator we construct a smooth orthogonal basis $(Y_\ell)_{\ell\in\N}$ of $$\bigg\{\zeta\in W^{2,2}(\omega):\,\,\int_{\Omega_{\eta_0}}\zeta\circ\bfvarphi_{\eta_0}^{-1}\,\dd\mathcal H^2\bigg\}.$$ Then we set $\bfX_\ell:=\tilde\bfX_\ell\circ\bfPsi_{\eta_0}$, which yields a basis of
\begin{align}\label{eq:2306}
\{\bfw\in W^{1,2}_0(\Omega):\,\,\mathbf B_{\eta_0}:\nabla\bfw=0\}.
\end{align}
We define vector fields $\tilde\bfY_\ell$ by setting $\tilde \bfY_\ell=\mathscr F^{\Div}_\Omega(( Y_\ell \bfn)\circ\bfvarphi^{-1})$, where $\mathscr F^{\Div}_\Omega$ is a solenoidal extension operator. It can be constructed by means of a standard extension and a Bogosvkii correction and thus maps $W^{k,2}(\omega)\rightarrow W^{k,2}(\R^n)$ for $k\in\N$ such that the $\tilde\bfY_\ell$'s are smooth. The functions
$\bfY_\ell:=\tilde\bfY_\ell\circ\bfPsi_{\eta_0}$ also belong to the function space in \eqref{eq:2306}. Now we choose an enumeration $(\bfomega_\ell)_{\ell\in\N}$ of $(\bfX_\ell)_{\ell\in\N}\cup (\bfY_\ell)_{\ell\in\N}$. 
Note that we use $(Y_\ell)_{\ell\in\N}$ as the Ansatz space for $\partial_t\eta$ and not $\eta$, as this is what is related to the fluid-velocity. The terms depending on $\eta$ are hence taken as primitive of the discretization of $\partial_t\eta$.
We now give a formal proof of estimate \eqref{energyEstLinear}, which can be made rigorous with the help of the Galerkin approximation.

Consider the pair of test functions $(\partial_t \eta, \overline{\bu})$ (which can be used as a test-function on the Galerkin level by construction) for
 \eqref{shellEqAloneBarLinear} and  \eqref{momEqAloneBarLinear}
 respectively. 
We use the ellipticity of $\mathbf{A}_{\eta_0} $,
\begin{equation}
\begin{aligned}
\label{3.8}
&\sup_I
\int_\omega\big(\vert \partial_t\eta\vert^2
+
\vert  \Dely \eta\vert^2 \big)\dy
 +
 \int_I\int_\omega
\vert \partial_t \naby \eta\vert^2 \dy\dt
 +
 \sup_I
 \int_\Omega\vert  \overline{\bu}\vert^2\dx
+
 \int_I\int_\Omega\vert  \nabx\overline{\bu}\vert^2\dx\dt
\\
&\lesssim
\int_\omega\big(\vert \eta_*\vert^2
+
\vert \Dely \eta_0\vert^2 \big)\dy
 +
 \int_I\int_\omega
\vert  g\vert^2 \dy\dt
  +
   \int_\Omega\vert \overline{\bu}_0\vert^2\dx
+
 \int_I\int_\Omega\big(\vert  \mathbf{h}\vert^2 
 +
 \vert  \mathbf{H}\vert^2 
 \big)
 \dx\dt
\\&
\quad+
 \int_I\int_\Omega\big(\vert  h\vert^2 
+
 \vert  \overline{\pi}\vert^2 
 \big)
 \dx\dt.
\end{aligned}
\end{equation}
If we now consider  $(\partial_t^2\eta,\partial_t \overline{\bu})$ as test functions (which can be used as a test-function on the Galerkin level since $B_{\eta_0}$ is time independent) for \eqref{shellEqAloneBarLinear} and \eqref{momEqAloneBarLinear} respectively, then we obtain
\begin{equation*}
\begin{aligned}
\label{3.9}
&
 \int_I\int_\omega
\vert \partial_t^2 \eta\vert^2 \dy\dt
 +
 \frac{1}{2}
 \int_\omega \vert \partial_t \naby\eta\vert^2
\dy
 +
 \int_I\int_\Omega\vert \partial_t \overline{\bu}\vert^2\dx\dt
 +
 \frac{1}{2}
 \int_\Omega\vert \nabx\overline{\bu}\vert^2\dx
\\
&=
\frac{1}{2}
 \int_\omega \vert \naby\eta_*\vert^2\dy
 + 
 \int_I\int_\omega
 \big( g -
  \Dely^2 \eta
 +\bn^\intercal \mathbf{H}\circ \bm{\varphi}\bn
 \big)  \partial_t^2\eta
  \dy\dt
\\&
\quad +
 \frac{1}{2}
 \int_\Omega\vert \nabx\overline{\bu}_0\vert^2\dx
 +
\int_I\int_\Omega\big( \mathbf{h}
-
\divx \mathbf{H} 
 \big)\cdot\partial_t \overline{\bu}
 \dx\dt
+
 \int_I\int_\Omega
 \overline{\pi}\partial_th
 \dx\dt
 .
\end{aligned}
\end{equation*}
We note that
\begin{equation}
\begin{aligned}
\label{3.10}
 -\int_I\int_\omega
   \Dely^2 \eta
 \cdot \partial_t^2\eta
  \dy\dt
  &=
  \int_I\int_\omega
 \partial_t( \naby \Dely \eta
 \cdot \partial_t  \naby\eta)
  \dy\dt
  +
  \int_I\int_\omega
  \vert
 \partial_t  \Dely \eta
 \vert^2
  \dy\dt
  \\&
  \leq
  \frac{1}{4}
  \sup_I\int_\omega
 \vert \partial_t  \naby\eta
 \vert^2
  \dy
  +
  \sup_I\int_\omega
 \vert \naby \Dely \eta
 \vert^2
  \dy
  +
  \int_I\int_\omega
  \vert
 \partial_t  \Dely \eta
 \vert^2
  \dy\dt,
\end{aligned}
\end{equation}
and we thus obtain
\begin{equation}
\begin{aligned}
\label{3.11}
&
 \int_I\int_\omega
\vert \partial_t^2 \eta\vert^2 \dy\dt
 + 
 \sup_I\int_\omega \vert \partial_t \naby\eta\vert^2
\dy
 +
 \int_I\int_\Omega\vert \partial_t \overline{\bu}\vert^2\dx\dt
 + 
 \sup_I\int_\Omega\vert  \nabx\overline{\bu}\vert^2\dx
\\
&\lesssim
 \int_\omega \vert \naby\eta_*\vert^2\dy
+
  \sup_I\int_\omega
 \vert \naby \Dely \eta
 \vert^2
  \dy
   + 
 \int_I\int_\omega
 \big(\vert  g \vert^2
 +
 \vert
 \partial_t  \Dely \eta
 \vert^2
 \big)
  \dy\dt
  \\&\quad + 
\int_I\Vert
 \mathbf{H}\Vert_{W^{1,2}_{\bx}}^2\dt
 +
 \int_\Omega\vert \nabx\overline{\bu}_0\vert^2\dx
 +
\int_I\int_\Omega\big( \vert \mathbf{h}\vert^2
+
\vert
 \nabx\mathbf{H}\vert^2 
 \big) \dx\dt
 \\&
\quad+
 \int_I\Vert \partial_t h\Vert_{W^{-1,2}_\bx}^2\dt
 +
 \int_I\Vert\overline{\pi}\Vert_{W^{1,2}_\bx}^2\dt,
\end{aligned}
\end{equation}
where we have used the trace theorem
\begin{align*}
 \int_I\int_{\partial\Omega}
 \vert
   \mathbf{H}\vert^2
  \dd\mathcal{H}^2\dt
  \lesssim
   \int_I\Vert
 \mathbf{H}\Vert_{W^{1,2}_{\bx}}^2\dt.
\end{align*}
We now test \eqref{shellEqAloneBarLinear}  with $-\partial_t \Dely\eta$ (which can be used as a test-function on the Galerkin level since we used the eigenvalues of the Laplace equation $-\Dely Y_\ell=\lambda_\ell Y_\ell$), to obtain
\begin{equation}
\begin{aligned}
\label{3.13}
&\int_\omega\big( \vert \partial_t \naby\eta\vert^2
+
\vert  \naby\Dely\eta\vert^2
\big)
\dy
   + 
 \int_I\int_\omega
 \vert
 \partial_t \Dely \eta
 \vert^2
  \dy\dt\\
 &\lesssim
 \int_\omega\big( \vert \naby\eta_*\vert^2
+
\vert   \naby\Dely\eta_0\vert^2
\big)
\dy
+ 
\int_I\int_\omega
\vert g \vert^2
\dy\dt
  \\& 
   \quad+
 \int_I\int_\omega
 \big\vert\bn^\intercal\big( \mathbf{H} -\mathbf{A}_{\eta_0}  \nabx\overline{\bu} +\mathbf{B}_{\eta_0} \overline{\pi}\big)\circ\bm{\varphi}  \bn\,\partial_t \Dely\eta\big\vert
  \dy\dt.
\end{aligned}
\end{equation}
For the last term, we have
\begin{equation}
\begin{aligned}
\label{findPressBar}
 &\int_I\int_\omega
 \big\vert\bn^\intercal\big( \mathbf{H} -\mathbf{A}_{\eta_0}  \nabx\overline{\bu} +\mathbf{B}_{\eta_0} \overline{\pi}\big)\circ\bm{\varphi}  \bn\,\partial_t \Dely\eta\big\vert
  \dy\dt
\\&\lesssim
 \int_I
 \big(\Vert \mathbf{H}\Vert_{W^{1/2,2}(\partial\Omega)} + \Vert  \nabx\overline{\bu}\Vert_{W^{1/2,2}(\partial\Omega)} 
 +
\Vert \overline{\pi}\Vert_{W^{1/2,2}(\partial\Omega)}
 \big)\Vert\partial_t \Dely\eta\Vert_{W^{-1/2,2}_{\by}}
 \dt
 \\&\lesssim
 \int_I
  \big(\Vert \mathbf{H}\Vert_{W^{1,2}_{\bx}} + \Vert  \nabx\overline{\bu}\Vert_{W^{1,2}_{\bx}} 
 +
\Vert \overline{\pi}\Vert_{W^{1,2}_{\bx}}
 \big)\Vert\partial_t  \eta\Vert_{W^{3/2,2}_{\by}}
 \dt
  \\&\lesssim
 \int_I
  \big(\Vert \mathbf{H}\Vert_{W^{1,2}_{\bx}} + \Vert   \nabx\overline{\bu}\Vert_{W^{1,2}_{\bx}} 
 +
\Vert \overline{\pi}\Vert_{W^{1,2}_{\bx}}
 \big)\Vert\partial_t  \eta\Vert_{W^{1,2}_{\by}}^{1/2}
 \Vert\partial_t  \eta\Vert_{W^{2,2}_{\by}}^{1/2}
 \dt
  \\&\leq \kappa
 \int_I
 \big(\Vert \mathbf{H}\Vert_{W^{1,2}_{\bx}}^2 + \Vert  \nabx\overline{\bu}\Vert_{W^{1,2}_{\bx}}^2 
 +
\Vert \overline{\pi}\Vert_{W^{1,2}_{\bx}}^2
+ 
\Vert\partial_t  \Dely\eta\Vert_{L^2_{\by}}^2
 \big)\dt
 +
 c(\kappa)
 \int_I
 \Vert\partial_t  \eta\Vert_{W^{1,2}_{\by}}^2
 \dt,
\end{aligned}
\end{equation}
where $\kappa>0$ is arbitrary. Here, we have used the equivalent relation
\begin{align}
\label{equivNorm}
\Vert f\Vert_{W^{2,2}_{\by}}
\lesssim
\Vert \Dely f\Vert_{L^{2}_{\by}}
\lesssim
\Vert f\Vert_{W^{2,2}_{\by}},
\end{align}
which holds for any $f\in W^{2,2}_{\by}$. The second inequality in \eqref{equivNorm} is straightforward whereas the first inequality follows from the estimate $\Vert\naby^2 f\Vert_{L^2_{\by}}
\lesssim
\Vert \Dely f\Vert_{L^{2}_{\by}}$ that is derived by solving the trivial elliptic equation $\Dely f=\Dely f$.
\\
Moreover, notice that by using \eqref{3.8} to estimate the last term in \eqref{findPressBar}, we obtain from \eqref{3.13} and \eqref{3.8} that
\begin{equation}
\begin{aligned}
\label{3.16}
&\sup_I\int_\omega\big( \vert \partial_t \naby\eta\vert^2
+
\vert  \naby\Dely\eta\vert^2
\big)
\dy
   + 
 \int_I\int_\omega
 \vert
 \partial_t \Dely \eta
 \vert^2
  \dy\dt
\\&
\leq
 \kappa
 \int_I
 \big(  \Vert  \nabx\overline{\bu}\Vert_{W^{1,2}_{\bx}}^2 
 +
\Vert \overline{\pi}\Vert_{W^{1,2}_{\bx}}^2
  \big)\dt
  +
c(\kappa)
 \int_\omega\big( \vert \eta_*\vert^2+\vert \naby\eta_*\vert^2
+
\vert \Dely \eta_0\vert^2+
\vert   \naby\Dely\eta_0\vert^2
\big)
\dy
  \\&  \quad+
  c(\kappa)
   \int_\Omega\vert \overline{\bu}_0\vert^2\dx 
   + 
   c(\kappa)
 \int_I\int_\omega
  \vert g \vert^2
  \dy\dt
 +
 c(\kappa)
 \int_I\int_\Omega\big(\vert  h\vert^2+\vert  \mathbf{h}\vert^2 
 +
 \vert  \mathbf{H}\vert^2 
  +
 \vert \nabx \mathbf{H}\vert^2 
 \big)
 \dx\dt.
\end{aligned}
\end{equation}
To find an estimate for the pressure term in \eqref{3.16}, we decompose it into   $\overline{\pi}=\overline{\pi}_0 +c_{\overline{\pi}}$ where $\int_\Omega \overline{\pi}_0\dx=0$ and $c_{\overline{\pi}}$ is only dependent of time. We therefore deduce from
\eqref{shellEqAloneBarLinear} that
\begin{align*}
 c_{\overline{\pi}}\int_\omega\bn^\intercal\mathbf{B}_{\eta_0}\circ\bm{\varphi} \bn 
\dy
=
\int_\omega
\big(
 \partial_t^2\eta 
 -
  g
-
\bn^\intercal \big[
\mathbf{B}_{\eta_0}  \overline{\pi}_0
+
 \mathbf{H} -\mathbf{A}_{\eta_0}  \nabx\overline{\bu} \big]\circ\bm{\varphi} \bn
\big)\dy,
\end{align*}
where we used the zero-mean property of $\eta$.
Since $\mathbf{B}_{\eta_0}$ is uniformly elliptic, it follows from the above and Poincar\'e's inequality that
\begin{equation}
\begin{aligned}
\label{317}
\int_I
\Vert  \overline{\pi} \Vert_{W^{1,2}_{\bx}}^2\dt 
&\lesssim
\int_I
\big(
\Vert \nabx\overline{\pi} \Vert_{L^2_{\bx}}^2 
+
\Vert  \overline{\pi}_0 \Vert_{L^2_{\bx}}^2\big)\dt 
+
\int_I
( c_{\overline{\pi}})^2 \dt
\\&\lesssim
\int_I
\big(
\Vert \nabx\overline{\pi} \Vert_{L^2_{\bx}}^2 
+
\Vert \overline{\pi}_0 \Vert_{L^2_{\bx}}^2\big)\dt  
+
\int_I\int_\omega
\big(
\vert \partial_t^2 \eta\vert^2 
+
\vert  g\vert^2 
\big)\dy\dt
 \\&\quad+ 
  \int_I
 \big(\Vert \pi_0\Vert_{W^{1,2}_{\bx}}^2 
 +
 \Vert \mathbf{H}\Vert_{W^{1,2}_{\bx}}^2 + \Vert  \nabx\overline{\bu}\Vert_{W^{1,2}_{\bx}}^2
 \big)\dt,
\end{aligned}
\end{equation}
where we have used the trace theorem,
\begin{equation*}
\begin{aligned} 
& \int_I
 \big(\Vert \pi_0\Vert_{L^2(\partial\Omega)}^2 
 +
 \Vert \mathbf{H}\Vert_{L^2(\partial\Omega)}^2 + \Vert \nabx\overline{\bu}\Vert_{L^2(\partial\Omega)}^2
 \big)\dt
 \\&
 \lesssim
 \int_I
 \big(\Vert \pi_0\Vert_{W^{1,2}_{\bx}}^2 
 +
 \Vert \mathbf{H}\Vert_{W^{1,2}_{\bx}}^2 + \Vert  \nabx\overline{\bu}\Vert_{W^{1,2}_{\bx}}^2
 \big)\dt.
\end{aligned}
\end{equation*}
Collecting the inequalities from \eqref{3.8}, \eqref{3.11}, \eqref{3.16} and \eqref{317}, we conclude that
\begin{equation}
\begin{aligned}
\label{319}
&\sup_I\int_\omega
\big(\vert \partial_t \naby \eta\vert^2 
+
\vert  \naby\Dely \eta\vert^2
\big)
\dy
+
\sup_I\int_\Omega\vert  \nabx \overline{\bu}\vert^2\dx
\\&\quad+
\int_I\int_\omega
\big(\vert \partial_t \Dely \eta \vert^2 
+
\vert \partial_t \naby \eta\vert^2
+ \vert \partial_t^2 \eta\vert^2
 \big)\dy\dt
 +
\int_I\int_\Omega \vert \partial_t \overline{\bu} \vert^2  
  \dx\dt
 \\&\lesssim
 \int_\omega\big( \vert \eta_*\vert^2
 +
 \vert \naby\eta_*\vert^2
 +
 \vert \Dely\eta_0\vert^2
 +
  \vert \naby\Dely\eta_0\vert^2
  \big)\dy
  +
  \int_\Omega \big(\vert
  \overline{\bu}_0\vert^2
  +
   \vert\nabx\overline{\bu}_0\vert^2 \big)\dx
   \\& 
   \quad+
 \int_I\int_\omega  \vert  g\vert^2  
 \dy\dt
 +
  \int_I\int_\Omega\big(
  \vert  h\vert^2
  +
   \vert  \mathbf{h}\vert^2 +
  \vert  \mathbf{H}\vert^2
  +
  \vert  \nabx\mathbf{H}\vert^2
  \big)
 \dx\dt
 \\&
 \quad+
 \int_I\Vert \partial_th\Vert_{W^{-1,2}_\bx}^2\dt
 +
 \kappa
 \int_I
 \big(\Vert \pi_0\Vert_{W^{1,2}_{\bx}}^2 
  + \Vert   \nabx\overline{\bu}\Vert_{W^{1,2}_{\bx}}^2
  \big)\dt.
\end{aligned}
\end{equation}
Our next goal is to estimate the $\kappa$-terms above to get \eqref{energyEstLinear}. For this, we transform
\eqref{contEqAloneBarLinear} and \eqref{momEqAloneBarLinear} by applying $\bm{\Psi}_{\eta_0}^{-1}$ to them. By setting $\underline{\bu}:=\overline{\bu} \circ \bm{\Psi}_{\eta_0}^{-1}$ and $\underline{\pi}:=\overline{\pi} \circ \bm{\Psi}_{\eta_0}^{-1}$, we obtain
\begin{equation*}
	\left\{\begin{aligned}
&\divx \underline{\bu}=h\circ \bm{\Psi}_{\eta_0}^{-1},
\\&
 \partial_t \underline{\bu}  - \Delx\underline{\bu}
 + \nabx \underline{\pi} 
 = 
 J_{\eta_0}^{-1}\big(
 \mathbf{h}-
\divx  \mathbf{H} \big)\circ \bm{\Psi}_{\eta_0}^{-1},
\end{aligned}\right.
\end{equation*}
in $I \times \Omega_{\eta_0}$ with  $\underline{\bu}  \circ \bm{\varphi}_{\eta_0}  =(\partial_t\eta)\bn$   on $I\times \omega$. Based on the maximal regularity theorem for the classical unsteady Stokes system (Please refer to, for instance, \cite{solonnikov1977}), we obtain
\begin{equation*}
\begin{aligned}
&	\int_I\int_{\Omega_{\eta_0}}\big(\vert
 \partial_t \underline{\bu}\vert^2+ \vert \nabx^2\underline{\bu}
 \vert^2
 +
 \vert
 \nabx \underline{\pi} \vert^2\big)\dx\dt\\
 &\lesssim
 \int_I\Vert \partial_t \eta\Vert_{W^{3/2,2}_\by}^2\dt
 +
 \int_I\int_{\Omega_{\eta_0}}
\vert
\nabx( h\circ \bm{\Psi}_{\eta_0}^{-1})
\vert^2\dx\dt
 \\
 &\quad+
 \int_I\int_{\Omega_{\eta_0}}
 \big(\vert
 \mathbf{h}\circ \bm{\Psi}_{\eta_0}^{-1}
\vert^2
+
\vert(\divx  \mathbf{H} )\circ \bm{\Psi}_{\eta_0}^{-1}\vert^2\big)\dx\dt +\int_{\Omega_{\eta_0}}|\nabla\underline \bu_0|^2\dx .
\end{aligned}
\end{equation*}
We now transform back to $\Omega$, 
 interpolate the regularity for the shell and use \eqref{equivNorm}, we obtain for any $\kappa>0$,
\begin{equation*}
\begin{aligned}
	&	\int_I\int_{\Omega}\big(\vert
 \partial_t \overline{\bu}\vert^2+ \vert \nabx^2\overline{\bu}
 \vert^2
 +
 \vert
  \nabx \overline{\pi} \vert^2\big)\dx\dt\\
  & \leq
 \kappa
 \int_I\Vert \partial_t\Dely \eta\Vert_{L^2_\by}^2\dt
 +
 c(\kappa)
 \int_I\Vert \partial_t \eta\Vert_{W^{1,2}_\by}^2\dt
 \\
 &\quad+c
 \int_I\int_{\Omega}
 \big(\vert\nabx h\vert^2 + 
 \vert
 \mathbf{h}
\vert^2
+
\vert    \nabx\mathbf{H}  \vert^2\big)\dx\dt 
+
c\int_{\Omega }|\nabla\overline \bu_0|^2\dx.
\end{aligned}
\end{equation*}
If we now combine this with \eqref{319}, then we obtain the desired estimate \eqref{energyEstLinear} (note that one can finally control $\Dely^2\eta$ by means of equation \eqref{shellEqAloneBarLinear}).
\end{proof}

\subsection{Fixed-point argument}
Based on Proposition \ref{thm:transformedSystemLinear}, 
in this section, we study the existence of the solution of the nonlinear system \eqref{contEqAloneBar}--\eqref{momEqAloneBar}, by employing the Banach fixed-point argument. 
We assume that the triplet $(\zeta, \overline{\mathbf{w}}, \overline{q})$ are given and we wish to solve
\begin{align}
\label{contEqAloneBarFixed}
\mathbf{B}_{\eta_0}:\nabx \overline{\bu}= h_\zeta(\overline{\mathbf{w}}),
\\
\partial_t^2\eta - \partial_t\Dely \eta +  \Dely^2\eta
=
g+\bn^\intercal \big[\mathbf{H}_\zeta(\overline{\mathbf{w}}, \overline{q})-\mathbf{A}_{\eta_0}  \nabx\overline{\bu} +\mathbf{B}_{\eta_0}\overline{\pi}\big]\circ\bm{\varphi} \bn ,
\label{shellEqAloneBarFixed}
\\
J_{\eta_0}\partial_t \overline{\bu}  -\divx(\mathbf{A}_{\eta_0}  \nabx\overline{\bu}) 
 +\divx(\mathbf{B}_{\eta_0}\overline{\pi}) 
 = 
\mathbf{h}_\zeta(\overline{\mathbf{w}})-
\divx  \mathbf{H}_\zeta(\overline{\mathbf{w}}, \overline{q})
\label{momEqAloneBarFixed}
\end{align}
with  $\overline{\bu}  \circ \bm{\varphi}  =(\partial_t\eta)\bn$ on $I_*\times \omega$. Here, $I_*:=(0,T_*)$ is to be determined later. We define the space
\begin{align*}
X_{I_*}:=&\left(W^{1,\infty}\big(I_*;W^{1,2}(\omega)  \big)  \cap L^{\infty}\big(I_*;W^{3,2}(\omega)  \big) \cap W^{1,2}\big(I_*;W^{2,2}(\omega)  \big)
\cap W^{2,2}\big(I_*;L^{2}(\omega)  \big)\right)
\\&
\times
\left(L^\infty \big(I_*; W^{1,2}(\Omega ) \big)\cap  W^{1,2}\big(I_*;L^{2}(\Omega)  \big)\cap L^{2}\big(I_*;W^{2,2}(\Omega)  \big)\right)
\times
L^2\big(I_*;W^{1,2}(\Omega)  \big),
\end{align*}
equipped with the norm
\begin{align*}
\Vert (\zeta,\overline{\mathbf{w}}, \overline{q}) \Vert_{X_{I_*}}^2
&:=
\sup_{I_*}\int_\omega\big( \vert \partial_t\zeta\vert^2 +  \vert \partial_t\naby\zeta\vert^2 +  \vert\Dely\zeta \vert^2 +  \vert\naby\Dely\zeta \vert^2 \big)\dy
\\&\quad+
\int_{I_*}\int_\omega\big( \vert \partial_t\naby\zeta\vert^2 +  \vert \partial_t\Dely\zeta\vert^2 +  \vert\partial_t^2\zeta \vert^2 \big)\dy\dt
\\&
\quad+\sup_{I_*}\int_\Omega\big( \vert \overline{\mathbf{w}}\vert^2 +  \vert \nabx \overline{\mathbf{w}} \vert^2  \big)\dx
\\&\quad+
\int_{I_*}\int_\Omega\big( \vert \nabx \overline{\mathbf{w}}\vert^2 +  \vert \nabx^2 \overline{\mathbf{w}} \vert^2 +  \vert\partial_t  \overline{\mathbf{w}} \vert^2
+\vert \overline{q}\vert^2 +\vert \nabx\overline{q}\vert^2 \big)\dx\dt.
\end{align*}

Let $B_R^{X_{I_*}}$ be a ball defined as
\begin{align*}
B_R^{X_{I_*}}:= \big\{ (\zeta,\overline{\mathbf{w}}, \overline{q})\in X_{I_*}
\text{ with } \quad \zeta(0)=\eta_0, \quad \partial_t\zeta(0) = \eta_*,\quad \overline{\mathbf{w}}(0)=\overline \bu_0  \, :\, \Vert(\zeta,\overline{\mathbf{w}}, \overline{q})\Vert_{X_{I_*}}^2\leq R \big\},
\end{align*}
for some $R>0$ large enough. 

\begin{theorem}\label{fixedpoint}
	There exists a time $T>0$ and $R>0$, such that the map $\mathcal{T}$, defined by 
	\begin{equation*}
	\begin{aligned}
	\mathcal{T}:B_R^{X_{I_*}}&\rightarrow B_R^{X_{I_*}}\\
	(\zeta, \overline{\mathbf{w}}, \overline{q})&\mapsto(\eta,\overline{\bu}, \overline{\pi}),
	\end{aligned}
	\end{equation*}
 is a contraction map, which thereby possesses a fixed point in $X_{I_*}$.
\end{theorem}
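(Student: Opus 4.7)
The plan is to use Proposition \ref{thm:transformedSystemLinear} as a black box on the linear system \eqref{contEqAloneBarFixed}--\eqref{momEqAloneBarFixed} with data produced from $(\zeta,\overline{\mathbf{w}},\overline{q})\in B_R^{X_{I_*}}$, and then to extract smallness from the lifetime $T_*$. First I would verify that for every input $(\zeta,\overline{\mathbf{w}},\overline{q})\in B_R^{X_{I_*}}$ the triple $(h_\zeta(\overline{\mathbf{w}}),\mathbf{h}_\zeta(\overline{\mathbf{w}}),\mathbf{H}_\zeta(\overline{\mathbf{w}},\overline{q}))$ satisfies the regularity requirement \eqref{differentHdata}, so that Proposition \ref{thm:transformedSystemLinear} produces a unique $(\eta,\overline{\bu},\overline{\pi})$ with $(\eta,\overline{\bu},\overline{\pi})\in X_{I_*}$. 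The key observation driving everything is that $\zeta(0)=\eta_0$, which, combined with $\partial_t\zeta\in L^2(I_*;W^{2,2}(\omega))\cap L^\infty(I_*;W^{1,2}(\omega))$, gives
\[
\|\zeta-\eta_0\|_{L^\infty_t W^{2,2}_\by}\lesssim T_*^{1/2}\|\partial_t\zeta\|_{L^2_tW^{2,2}_\by}\lesssim T_*^{1/2}R^{1/2},
\]
and similarly at other levels of regularity. By the Lipschitz bounds \eqref{211and213} and \eqref{218} applied to $\bm{\Psi}_{\zeta}-\bm{\Psi}_{\eta_0}$, this smallness transfers to $\mathbf{A}_\zeta-\mathbf{A}_{\eta_0}$, $\mathbf{B}_\zeta-\mathbf{B}_{\eta_0}$, $J_\zeta-J_{\eta_0}$ (and their time derivatives) uniformly in the relevant Sobolev multiplier norms on $\Omega$.

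For the self-mapping step, the idea is to plug these smallness estimates into the right-hand side of \eqref{energyEstLinear}. Writing, for instance, $\mathbf{H}_\zeta(\overline{\mathbf{w}},\overline{q})=(\mathbf{A}_{\eta_0}-\mathbf{A}_\zeta)\nabla\overline{\mathbf{w}}-(\mathbf{B}_{\eta_0}-\mathbf{B}_\zeta)\overline{q}$, one obtains a bound of the form
\[
\|\mathbf{H}_\zeta(\overline{\mathbf{w}},\overline{q})\|_{L^2_t W^{1,2}_\bx}\leq C(R)\,T_*^\theta\,\|(\zeta,\overline{\mathbf{w}},\overline{q})\|_{X_{I_*}},
\]
for some $\theta>0$, where the $W^{1,2}$-regularity of $\overline{\mathbf{w}}$ and $\overline{q}$ in space, together with the $L^\infty_t W^{1,\infty}$-control of the matrix differences, is used. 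An analogous estimate applies to $h_\zeta(\overline{\mathbf{w}})$ (including its $\partial_t$ taken in $W^{-1,2}_\bx$ thanks to the $L^\infty_t W^{1,2}_\bx$ regularity of $\overline{\mathbf{w}}$ and the $W^{1,2}_tW^{2,2}_\by$ regularity of $\zeta$) and to $\mathbf{h}_\zeta(\overline{\mathbf{w}})$ (the convective term $\mathbf{B}_\zeta\nabla\overline{\mathbf{w}}\,\overline{\mathbf{w}}$ being handled by $\|\overline{\mathbf{w}}\|_{L^\infty_tW^{1,2}_\bx}\hookrightarrow L^\infty_tL^6_\bx$ paired with $\|\nabla\overline{\mathbf{w}}\|_{L^2_tW^{1,2}_\bx}\hookrightarrow L^2_tL^6_\bx$, so that the cubic term is controlled by $T_*^\theta R^{3/2}$). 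Fixing $R$ large enough that the data contribution in \eqref{energyEstLinear} is $\leq R/2$, and then choosing $T_*$ so small that $C(R)T_*^\theta\leq 1/2$, delivers $\|\mathcal{T}(\zeta,\overline{\mathbf{w}},\overline{q})\|_{X_{I_*}}^2\leq R$.

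For the contraction, the linear structure of \eqref{contEqAloneBarFixed}--\eqref{momEqAloneBarFixed} in the unknowns $(\eta,\overline{\bu},\overline{\pi})$ implies that the difference of two images solves the same linear system with zero initial data and right-hand sides given by $h_{\zeta_1}(\overline{\mathbf{w}}_1)-h_{\zeta_2}(\overline{\mathbf{w}}_2)$ etc. Using once more \eqref{211and213}, these differences decompose into (matrix-difference)$\times$(input) plus (matrix at $\zeta_2$)$\times$(input-difference), each of which is Lipschitz in the inputs with constant controlled by $R$ and multiplied by a small power of $T_*$ through the same argument as above. Applying \eqref{energyEstLinear} once more yields
\[
\|\mathcal{T}(\zeta_1,\overline{\mathbf{w}}_1,\overline{q}_1)-\mathcal{T}(\zeta_2,\overline{\mathbf{w}}_2,\overline{q}_2)\|_{X_{I_*}}\leq C(R)T_*^\theta\,\|(\zeta_1-\zeta_2,\overline{\mathbf{w}}_1-\overline{\mathbf{w}}_2,\overline{q}_1-\overline{q}_2)\|_{X_{I_*}},
\]
and after further shrinking $T_*$ one obtains a contraction with constant $\leq 1/2$. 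Banach's fixed point theorem then delivers the required fixed point.

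The main technical obstacle is the careful tracking of how each right-hand side norm scales in $T_*$, particularly for the quasilinear coefficient $\mathbf{B}_{\eta_0}$ in the continuity equation (which forces the use of $\partial_t h$ in $W^{-1,2}_\bx$, hence the need for the temporal regularity of $\zeta$ via \eqref{218}) and for the convective term in $\mathbf{h}_\zeta(\overline{\mathbf{w}})$ where the 3D embedding $W^{1,2}\hookrightarrow L^6$ is borderline and is the reason why the $W^{1,2}_t L^2_\bx$-part of $\overline{\mathbf{w}}$ and the full $L^2_tW^{2,2}_\bx$-regularity enter the norm on $X_{I_*}$. Once these estimates are in place, the contraction follows from the same template, and the fixed point $(\eta,\overline{\bu},\overline{\pi})$ solves \eqref{contEqAloneBar}--\eqref{momEqAloneBar}; Theorem \ref{thm:transformedSystem} then yields Theorem \ref{thm:fluidStructureWithoutFK}.
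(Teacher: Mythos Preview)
Your plan is essentially the same as the paper's: apply Proposition \ref{thm:transformedSystemLinear} to \eqref{contEqAloneBarFixed}--\eqref{momEqAloneBarFixed}, fix $R$ so that the data contribution is $\le R/2$, and squeeze a positive power of $T_*$ out of each of $h_\zeta,\partial_t h_\zeta,\mathbf h_\zeta,\mathbf H_\zeta$ (and of their differences for the contraction step). The paper organises this into four terms $\overline K_1,\dots,\overline K_4$ and their difference analogues $K_1,\dots,K_4$, exactly along the lines you describe.

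One technical point where your sketch is slightly off: the pairing $\|\overline{\mathbf w}\|_{L^\infty_t L^6_\bx}\cdot\|\nabla\overline{\mathbf w}\|_{L^2_t L^6_\bx}$ puts $\mathbf B_\zeta\nabla\overline{\mathbf w}\,\overline{\mathbf w}$ in $L^2_t L^3_\bx\hookrightarrow L^2_t L^2_\bx$, but this embedding carries no factor of $T_*$, so no smallness follows. The paper instead extracts the power of $T_*$ from an interpolation that is not scale-invariant in time, namely
\[
W^{1,2}(I_*;L^2(\Omega))\cap L^2(I_*;W^{2,2}(\Omega))\hookrightarrow W^{1/8,2}(I_*;W^{7/4,2}(\Omega))\hookrightarrow L^2(I_*;W^{1,4}(\Omega)),
\]
whose constant picks up $T_*^{1/8}$, and then uses $\|\nabla\overline{\mathbf w}\|_{L^4_\bx}\|\overline{\mathbf w}\|_{L^4_\bx}$. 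Similarly, your bound $\|\zeta-\eta_0\|_{L^\infty_t W^{2,2}_\by}\lesssim T_*^{1/2}R^{1/2}$ is correct but just misses $W^{1,\infty}_\by$ in two dimensions; the paper closes this via the embedding $L^\infty_t W^{3,2}_\by\cap W^{1,2}_t W^{2,2}_\by\hookrightarrow C^{0,1/8}(I_*;W^{11/4,2}_\by)\hookrightarrow L^\infty_t W^{1,\infty}_\by$, which again contributes a power of $T_*$. These are refinements rather than gaps---your interpolation remark ``similarly at other levels of regularity'' already hints at them---so the plan goes through once you replace the borderline pairings by these time-inhomogeneous embeddings.
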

\begin{proof}
We would like to show that the map $\mathcal{T}$ defined above maps the ball $B_R^{X_{I_*}} $ into itself and that for any $(\zeta_i, \overline{\mathbf{w}}_i, \overline{q}_i)\in B_R^{X_{I_*}}$, for $i=1,2$, we can find $\rho<1$ such that
\begin{align*}
	\Vert \mathcal{T}(\zeta_1, \overline{\mathbf{w}}_1, \overline{q}_1)
	-
	\mathcal{T}(\zeta_2, \overline{\mathbf{w}}_2, \overline{q}_2)\Vert_{X_{I_*}}\leq \rho \Vert (\zeta_1, \overline{\mathbf{w}}_1, \overline{q}_1)-(\zeta_2, \overline{\mathbf{w}}_2, \overline{q}_2)\Vert_{X_{I_*}}.
\end{align*}
To present the proof clearly, we divide it in the following two steps.
\\ \\
{\bf Step 1:} {\em We show that $\mathcal{T}:B_R^{X_{I_*}} \rightarrow  B_R^{X_{I_*}}$, i.e. the ball $B_R^{X_{I_*}}$ is $\mathcal{T}$-invariant.}
To do this, we need to show that for any $(\zeta, \overline{\mathbf{w}}, \overline{q}) \in B_R^{X_{I_*}}$, we have 
\begin{align}
\label{ballToBall}
\Vert\mathcal{T}(\zeta, \overline{\mathbf{w}}, \overline{q})\Vert_{X_{I_*}}^2
=
\Vert(\eta, \overline{\bu}, \overline{\pi})\Vert_{X_{I_*}}^2
\leq R.
\end{align}
Indeed, according to \eqref{energyEstLinear}, we deduce the following estimate from \eqref{contEqAloneBarFixed}--\eqref{momEqAloneBarFixed} 
\begin{equation}
\begin{aligned}
\label{energyEstLinearForBall}
\Vert(\eta, \overline{\bu}, \overline{\pi})\Vert_{X_{I_*}}^2
&\lesssim
\mathcal{D}(g, \eta_0, \eta_*,\overline{\bu}_0)
 +
   \int_{I_*}\big(\Vert \partial_t h_{\zeta}(\overline{\mathbf{w}}) \Vert_{W^{-1,2}(\Omega)}^2\dt
 +
  \Vert h_{\zeta}(\overline{\mathbf{w}})\Vert_{W^{1,2}(\Omega)}^2
  \big)
 \dt
 \\&
\qquad  
+
\int_{I_*}\big(
   \Vert \mathbf{h}_{\zeta}(\overline{\mathbf{w}})\Vert_{L^{2}(\Omega)}^2 +
  \Vert \mathbf{H}_{\zeta}(\overline{\mathbf{w}}, \overline{q})\Vert_{W^{1,2}(\Omega)}^2
  \big)
 \dt
 \\&
 =:\mathcal{D}(g, \eta_0,\eta_*,\overline{\bu}_0)+\overline{K}_1+\overline{K}_2+\overline{K}_3+\overline{K}_4,
\end{aligned}
\end{equation}
where
\begin{equation*}
\begin{aligned}
		\mathcal{D}(g, \eta_0,\eta_*,\overline{\bu}_0)
		&=
		\Vert \eta_*\Vert_{W^{1,2}(\omega)}^2
		+
		\Vert \eta_0\Vert_{W^{3,2}(\omega)}^2
		+
		\Vert
		\overline{\bu}_0\Vert_{W^{1,2}(\Omega)}^2
		+
		\int_{I_*} \Vert g\Vert_{L^2(\omega)}^2  \dt.
\end{aligned}
\end{equation*}
Recalling the regularity assumption in \eqref{datasetImproved}, we choose $R>0$ large enough, such that
\begin{align}
\label{cDataR}
c \mathcal{D}(g, \eta_0, \eta_*,\overline{\bu}_0)
\leq \frac{1}{2}R,
\end{align}
where $c>0$ is the constant in \eqref{energyEstLinearForBall}. We show in what follows that the sum of the $\overline{K}_i$ is also bounded by $\frac{1}{2}R$ which will give the estimate \eqref{ballToBall}.
 
To estimate $\overline{K}_1$, we have
\begin{align*}
\partial_t h_{\zeta}(\overline{\mathbf{w}} )
&=
 (\mathbf{B}_{\eta_0}-\mathbf{B}_{\zeta}):\partial_t\nabx\overline{\mathbf{w}} 
 -
  (\partial_t\mathbf{B}_{\zeta}):\nabx\overline{\mathbf{w}}.
\end{align*}
We notice that the continuous embedding 
\begin{align}\label{eq:emb1}
L^\infty(I_*;W^{3,2}(\omega))\cap W^{1,2}(I_*;W^{2,2}(\omega))\hookrightarrow C^{0, 1/8}(I_*; W^{11/4, 2}(\omega))\hookrightarrow
L^\infty(I_*;W^{1,\infty}(\omega)),
\end{align}
scales with $T_*^{1/8}$. 
We thereby obtain from \eqref{210and212}--\eqref{218} that
\begin{equation}
\begin{aligned}
\label{k1ax}
\int_{I_*}\Vert (\mathbf{B}_{\eta_0}-\mathbf{B}_{\zeta}):\partial_t\nabx \overline{\mathbf{w}} \Vert_{W^{-1,2}_{\bx}}
^2\dt
&\lesssim
\int_{I_*}\Vert  \mathbf{B}_{\eta_0}-\mathbf{B}_{\zeta}\Vert_{L^\infty_\bx}^2\Vert\partial_t \overline{\mathbf{w}} \Vert_{L^2_\bx}^2
\dt
\\&\lesssim 
\sup_{I_*}\Vert  \eta_0 - \zeta\Vert_{W^{1,\infty}_\by}^2\int_{I_*}\Vert  \partial_t \overline{\mathbf{w}}  \Vert_{L^2_\bx}^2
\dt
\\&
\lesssim
T^{1/4}_*
\Vert (\zeta, \overline{\mathbf{w}}, \overline{q})\Vert_{X_{I_*}}^2.
\end{aligned}
\end{equation}
On the other hand, due to the continuous embedding
\begin{align*}
W^{2,2}(I_*;L^2(\omega))\cap W^{1,2}(I_*;W^{2,2}(\omega))\hookrightarrow W^{5/4,2}(I_\ast;W^{3/2,2}(\omega)) \hookrightarrow
W^{1,4}(I_*;W^{1,4}(\omega)),
\end{align*}
it follows from H\"older inequality that
\begin{equation}
\begin{aligned}
\label{k1cx}
\int_{I_*}\Vert\partial_t \mathbf{B}_{\zeta}:\nabx \overline{\mathbf{w}}\Vert_{W^{-1,2}_\bx}^2
\dt
&\lesssim
\int_{I_*}\Vert\partial_t \mathbf{B}_{\zeta}:\nabx \overline{\mathbf{w}} \Vert_{L^{4/3}_\bx}^2
\dt
\\&\lesssim
\int_{I_*}\Vert  \partial_t\mathbf{B}_{\zeta}\Vert^2_{L^4_\bx}\Vert\nabx \overline{\mathbf{w}} \Vert_{L^2_\bx}^2
\dt
\\&\lesssim T^{1/2}_*
\bigg(\int_{I_*} \Vert  \partial_t\zeta\Vert_{W^{1,4}_\by}^4\dt\bigg)^\frac{1}{2}
\sup_{I_*}\Vert  \nabx\overline{\mathbf{w}}\Vert_{L^2_\bx}^2
\\&
\lesssim
T^{1/2}_*
\Vert (\zeta, \overline{\mathbf{w}}, \overline{q})\Vert_{X_{I_*}}^2.
\end{aligned}
\end{equation} 
Combining \eqref{k1ax} with \eqref{k1cx}, we have
\begin{equation}
\begin{aligned}
\label{k1finalx}
\overline{K}_1
\lesssim T^{1/2}_*
\Vert (\zeta, \overline{\mathbf{w}}, \overline{q}) \Vert_{X_{I_*}}^2.
\end{aligned}
\end{equation}

To estimate $\overline{K}_2$, we note that
\begin{equation*}
	\begin{aligned}
\vert
 h_{\zeta}(\overline{\mathbf{w}})
\vert
+
\vert
\nabx h_{\zeta}(\overline{\mathbf{w}})
\vert
&\lesssim
\vert (\mathbf{B}_{\eta_0}-\mathbf{B}_{\zeta}) :\nabx \overline{\mathbf{w}}
\vert
+
\vert (\mathbf{B}_{\eta_0}-\mathbf{B}_{\zeta}):\nabx^2 \overline{\mathbf{w}}
\vert
\\&
\quad+
\vert \nabx(\mathbf{B}_{\eta_0}-\mathbf{B}_{\zeta}):\nabx \overline{\mathbf{w}}
\vert.
\end{aligned}
\end{equation*}
Using  the argument in \eqref{eq:emb1} again,
we derive from \eqref{210and212}--\eqref{218} that
\begin{equation}
\begin{aligned}
\label{K2ax}
&\int_{I_*}\Vert   (\mathbf{B}_{\eta_0}-\mathbf{B}_{\zeta}) :\nabx \overline{\mathbf{w}} \Vert_{L^2_\bx}^2\dt
+
\int_{I_*}\Vert   (\mathbf{B}_{\eta_0}-\mathbf{B}_{\zeta}): \nabx^2  \overline{\mathbf{w}} \Vert_{L^2_\bx}^2\dt
\\&\lesssim
\sup_{I_*}\Vert    \eta_0 - \zeta \Vert_{W^{1,\infty}_\by}^2
\int_{I_*} \Vert\nabx^2 \overline{\mathbf{w}} \Vert_{L^2_\bx}^2\dt
\\&\lesssim
T^{1/4}_*
\Vert (\zeta, \overline{\mathbf{w}}, \overline{q}) \Vert_{X_{I_*}}^2.
\end{aligned}
\end{equation}
According to the continuous embeddings:
\begin{align}
&L^\infty(I_*;W^{3,2}(\omega)) \hookrightarrow
L^\infty(I_*;W^{2,4}(\omega)), 
\label{eq:emb2}
\end{align}
and
\begin{align}
\begin{aligned}
\,W^{1,2}(I_*;L^2(\Omega))\cap L^2(I_*;W^{2,2}(\Omega))
&\hookrightarrow W^{1/8,2}(I_*;W^{7/4,2}(\Omega))
\hookrightarrow
L^2(I_*;W^{1,4}(\Omega)),
\end{aligned}
\label{eq:emb3}
\end{align}
where the latter scales with $T^{1/8}$,
it follows that
\begin{equation}
\begin{aligned}
\label{K2bx}
\int_{I_*}\Vert  \nabx (\mathbf{B}_{\eta_0}-\mathbf{B}_{\zeta}) \nabx \overline{\mathbf{w}} \Vert_{L^2_\bx}^2\dt
&\lesssim
\sup_{I_*}\Vert   \eta_0 - \zeta \Vert_{W^{2,4}_\by}^2
\int_{I_*} \Vert\nabx  \overline{\mathbf{w}}\Vert_{L^4_\bx}^2\dt
\\&\lesssim
T^{1/4}_*
\Vert (\zeta, \overline{\mathbf{w}}, \overline{q}) \Vert_{X_{I_*}}^2.
\end{aligned}
\end{equation}
We obtain from \eqref{K2ax} and \eqref{K2bx} that
\begin{align}
\label{k2finalx}
\overline{K}_2
\lesssim
 T^{1/4}_* 
\Vert (\zeta,\overline{\mathbf{w}}, \overline{q}) \Vert_{X_{I_*}}^2.
\end{align} 

To estimate $\overline{K}_3$, let us recall that
\begin{align*}
\mathbf{h}_{\zeta}(\overline{\mathbf{w}})
=
(J_{\eta_0}-J_{\zeta})\partial_t \overline{\mathbf{w}}
+
J_{\zeta} \nabx \overline{\mathbf{w}}  \partial_t \Psi_{\zeta}^{-1}\circ \Psi_{\zeta} 
+
\mathbf{B}_\zeta \nabx \overline{\mathbf{w}}\,\overline{\mathbf{w}}
+
J_{\zeta} \bff\circ \Psi_{\zeta}.
\end{align*}
Due to the continuous embeddings \eqref{eq:emb1}, 
it follows from the definition $J_\eta=\det(\nabx \Psi_\eta)$ and \eqref{210and212}--\eqref{218}  that
\begin{equation}
\begin{aligned}
\label{K3ax}
\int_{I_*}\Vert   (J_{\eta_0}-J_{\zeta})\partial_t \overline{\mathbf{w}}\Vert_{L^2_\bx}^2\dt
&\lesssim
\sup_{I_*}\Vert  \eta_0 - \zeta\Vert_{W^{1,\infty}_\by}
\int_{I_*}\Vert \partial_t\overline{\mathbf{w}} \Vert_{L^2_\bx}^2\dt
\\&\lesssim
T^{1/4}_*
\Vert (\zeta,\overline{\mathbf{w}}, \overline{q})  \Vert_{X_{I_*}}^2.
\end{aligned}
\end{equation}
By using the embeddings
\begin{align*}
&L^\infty(I_*;W^{3,2}(\omega))\hookrightarrow L^\infty(I_*;W^{1,\infty}(\omega)),
\\&
W^{1,\infty}(I_*;W^{1,2}(\omega))
\cap
W^{1,2}(I_*;W^{2,2}(\omega))\hookrightarrow W^{1,4}(I_*; W^{3/2, 2}(\omega)) \hookrightarrow
W^{1,4}(I_*;L^{\infty}(\omega)),
\end{align*}
we obtain that
\begin{equation}
\begin{aligned}
\label{K3bx}
&\int_{I_*}\Vert   J_{\zeta}  \nabx \overline{\mathbf{w}} \partial_t \Psi_{\zeta}^{-1}\circ \Psi_{\zeta} \Vert_{L^2_\bx}^2\dt
\\&\lesssim
\int_{I_*}\big(1+\Vert   \zeta
\Vert_{W^{1,\infty}_\by}^2
\big)
\Vert \nabx \overline{\mathbf{w}} \Vert_{L^2_\bx}^2
\Vert \partial_t\zeta \Vert_{L^\infty_\by}^2 \dt
\\&
\lesssim
T^{1/2}_*
\sup_{I_*}\big(1+\Vert   \zeta
\Vert_{W^{1,\infty}_\by}^2
\big)
\sup_{I_*}
\Vert \nabx \overline{\mathbf{w}} \Vert_{L^2_\bx}^2
\bigg(\int_{I_*}
\Vert \partial_t\zeta \Vert_{L^\infty_\by}^4
\dt\bigg)^\frac{1}{2}
\\&
\lesssim
T^{1/2}_*
\Vert (\zeta,\overline{\mathbf{w}}, \overline{q})  \Vert_{X_{I_*}}^2.
\end{aligned}
\end{equation}
Also, by using the embedding \eqref{eq:emb3}
we obtain
\begin{equation}
\begin{aligned}
\label{K3dx}
\int_{I_*}\Vert   \mathbf{B}_{\zeta} \nabx \overline{\mathbf{w}}\, \overline{\mathbf{w}}_1  \Vert_{L^2_\bx}^2\dt
&\lesssim
\int_{I_*}\Vert   \zeta  \Vert_{W^{1,\infty}_\by}^2
\Vert
\nabx \overline{\mathbf{w}} \Vert_{L^4_\bx}^2
\Vert \overline{\mathbf{w}}  \Vert_{L^4_\bx}^2\dt
\\&\lesssim 
\sup_{I_*}\Vert   \zeta  \Vert_{W^{3,2}_\by}^2
\sup_{I_*}
\Vert \overline{\mathbf{w}}  \Vert_{W^{1,2}_\bx}^2
\int_{I_*}\Vert  \nabx\overline{\mathbf{w}} \Vert_{L^{4}_\bx}^2
\dt
\\&
\lesssim
T^{1/4}_*
\Vert (\zeta,\overline{\mathbf{w}}, \overline{q})  \Vert_{X_{I_*}}^2.
\end{aligned}
\end{equation}
Next, by using \eqref{eq:emb1}, we obtain
\begin{equation}
\begin{aligned}
\label{K3fx}
\int_{I_*}\Vert 
J_{\zeta} \bff\circ \Psi_{\zeta} 
 \Vert_{L^2_\bx}^2\dt
&
\lesssim 
\sup_{I_*}
\Vert
\zeta  
\Vert_{W^{1,\infty}_\by}^2\int_{I_*} \Vert \bff \Vert_{L^2_\bx}^2
\dt
\\&
\lesssim
T^{1/4}_*
\Vert (\zeta,\overline{\mathbf{w}}, \overline{q})  \Vert_{X_{I_*}}^2.
\end{aligned}
\end{equation}
It follows from \eqref{K3ax}--\eqref{K3fx} that
\begin{equation}
\begin{aligned}
\label{k3finalx}
\overline{K}_3
\lesssim  T^{1/2}_* 
\Vert (\zeta, \overline{\mathbf{w}}, \overline{q})\Vert_{X_{I_*}}^2.
\end{aligned}
\end{equation}

Our next goal is to estimate $\overline{K}_4$. Since
\begin{equation}
\begin{aligned}
\nonumber
\mathbf{H}_{\zeta}(\overline{\mathbf{w}}, \overline{q})
=
(\mathbf{A}_{\eta_0} -\mathbf{A}_{\zeta})\nabx\overline{\mathbf{w}}
+
(\mathbf{B}_{\eta_0}-\mathbf{B}_{\zeta}) \overline{q},
\end{aligned}
\end{equation}
due to the continuous embeddings  \eqref{eq:emb1}, \eqref{eq:emb3} and \eqref{eq:emb2},
it follows from \eqref{210and212}--\eqref{218} that
\begin{equation}
\begin{aligned}
\label{K4ax}
&\int_{I_*}\Vert (\mathbf{A}_{\eta_0} -\mathbf{A}_{\zeta})\nabx \overline{\mathbf{w}} \Vert_{W^{1,2}_\bx}^2\dt\\
&\lesssim
\int_{I_*}\Vert \nabx(\mathbf{A}_{\eta_0} -\mathbf{A}_{\zeta})  \Vert_{L^4_\bx}^2\Vert\nabx\overline{\mathbf{w}} \Vert_{L^4_\bx}^2\dt
+
\int_{I_*}\Vert  \mathbf{A}_{\eta_0} -\mathbf{A}_{\zeta}\Vert_{L^\infty_\bx}^2
\Vert\nabx^2\overline{\mathbf{w}} \Vert_{L^2_\bx}^2\dt
\\&\lesssim 
\sup_{I_*}\Vert  \eta_0 - \zeta \Vert_{W^{2,4}_\by}^2
\int_{I_*} \Vert\nabx \overline{\mathbf{w}} \Vert_{L^4_\bx}^2\dt
+
\sup_{I_*}\Vert  \eta_0 - \zeta \Vert_{W^{1,\infty}_\by}^2
\int_{I_*}\Vert\nabx^2\overline{\mathbf{w}}\Vert_{L^2_\bx}^2\dt
\\&\lesssim
 T^{1/4}_* 
\Vert (\zeta, \overline{\mathbf{w}}, \overline{q})\Vert_{X_{I_*}}^2.
\end{aligned}
\end{equation}
Next, we use the embedding
\begin{align*}
L^2(I_*;W^{3,2}(\omega))\cap W^{1,2}(I_*;W^{2,2}(\omega))\hookrightarrow W^{2/3,2}(I_*;W^{7/3,2}(\omega))
\hookrightarrow L^\infty(I_*;W^{2,3}(\omega)),
\end{align*}
where the latter scales with $T_*^{1/6}$,
and $W^{2,3}(\omega)\hookrightarrow W^{1,\infty}(\omega)$
to obtain
\begin{equation}
\begin{aligned}
\label{K4bx}
&\int_{I_*}\Vert (\mathbf{B}_{\eta_0}-\mathbf{B}_{\zeta}) \overline{q} \Vert_{W^{1,2}_\bx}^2\dt\\
&\lesssim
\int_{I_*}\Vert \nabx(\mathbf{B}_{\eta_0}-\mathbf{B}_{\zeta}) \Vert_{L^3_\bx}^2
\Vert
\overline{q} \Vert_{L^6_\bx}^2\dt+
\int_{I_*}\Vert \mathbf{B}_{\eta_0}-\mathbf{B}_{\zeta} \Vert_{L^\infty_\bx}^2
\Vert
\nabx\overline{q} \Vert_{L^2_\bx}^2\dt
\\
&\lesssim 
\sup_{I_*}\Vert  \eta_0 -  \zeta \Vert_{W^{2,3}_\by}^2
\int_{I_*}
\Vert
\overline{q} \Vert_{W^{1,2}_\bx}^2\dt
+
\sup_{I_*}\Vert  \eta_0 - \zeta \Vert_{W^{1,\infty}_\by}^2
\int_{I_*}
\Vert
 \overline{q} \Vert_{W^{1,2}_\bx}^2\dt
 \\&\lesssim
T^{1/3}_*
\Vert (\zeta, \overline{\mathbf{w}}, \overline{q})\Vert_{X_{I_*}}^2.
\end{aligned}
\end{equation}
By using \eqref{K4ax} and \eqref{K4bx}, it follows that
\begin{align}
\label{k4finalx}
\overline{K}_4
\lesssim
T^{1/3}_*
\Vert (\zeta,\overline{\mathbf{w}}, \overline{q})  \Vert_{X_{I_*}}^2.
\end{align}
Collecting the estimates \eqref{k1finalx}, \eqref{k2finalx}, \eqref{k3finalx} and \eqref{k4finalx}, we have shown that
\begin{align}
\label{contractionEstx}
\Vert \mathcal{T}(\zeta, \overline{\mathbf{w}}, \overline{q})
\Vert_{X_{I_*}}^2
=
\Vert(\eta, \overline{\bu}, \overline{\pi})\Vert_{X_{I_*}}^2
&\leq 
c  T^{1/2}_*  
\Vert (\zeta,\overline{\mathbf{w}}, \overline{q})  \Vert_{X_{I_*}}^2.
\end{align}
Since $(\zeta,\overline{\mathbf{w}}, \overline{q})\in B_R^{X_{I_*}}$, by choosing $T_*$ in $I_*=(0,T_*)$ so that $ T^{1/2}_* <\tfrac{c^{-1}}{2}$, we find that the right-hand side of \eqref{contractionEstx} is bounded by $R/2$. This, together with \eqref{cDataR}, implies \eqref{ballToBall}.
\\ \\
{\bf Step 2:} {\em We prove that $\mathcal{T}$ is a contraction map.} To show the contraction property, we denote $\eta_{12}:=\eta_1-\eta_2$, $\overline{\bu}_{12}:=\overline{\bu}_1-\overline{\bu}_2$ and $\overline{\pi}_{12}:=\overline{\pi}_1-\overline{\pi}_2$. 
We derive the system that $(\eta_{12}, \overline \bu_{12}, \overline \pi_{12})$ satisfies, which reads
\begin{align}
\label{contEqAloneBarFixed12}
\mathbf{B}_{\eta_0}:\nabx \overline{\bu}_{12}= h_{\zeta_1}(\overline{\mathbf{w}}_1)- h_{\zeta_2}(\overline{\mathbf{w}}_2),
\end{align}
and
\begin{equation}
\begin{aligned}
\varrho_s\partial_t^2\eta_{12} -\gamma\partial_t\Dely \eta_{12} + \alpha\Dely^2\eta_{12}
&=
\bn^\intercal \big[
-\mathbf{A}_{\eta_0}  \nabx\overline{\bu}_{12} +\mathbf{B}_{\eta_0}\overline{\pi}_{12}\big]\circ\bm{\varphi} \bn 
\\&
\quad+
\bn^\intercal \big[\mathbf{H}_{\zeta_1}(\overline{\mathbf{w}}_1, \overline{q}_1)-\mathbf{H}_{\zeta_2}(\overline{\mathbf{w}}_2, \overline{q}_2)
\big]\circ\bm{\varphi} \bn ,
\label{shellEqAloneBarFixed12}
\end{aligned}
\end{equation}
and
\begin{equation}
\begin{aligned}
J_{\eta_0}\partial_t \overline{\bu}_{12}  -\divx(\mathbf{A}_{\eta_0}  \nabx\overline{\bu}_{12}) 
 &+\divx(\mathbf{B}_{\eta_0}\overline{\pi}_{12}) 
 = 
\mathbf{h}_{\zeta_1}(\overline{\mathbf{w}}_1)-\mathbf{h}_{\zeta_2}(\overline{\mathbf{w}}_2)
\\&-
\divx  \mathbf{H}_{\zeta_1}(\overline{\mathbf{w}}_1, \overline{q}_1)
+
\divx  \mathbf{H}_{\zeta_2}(\overline{\mathbf{w}}_2, \overline{q}_2).
\label{momEqAloneBarFixed12}
\end{aligned}
\end{equation}
Since the left-hand side of \eqref{contEqAloneBarFixed12}, \eqref{shellEqAloneBarFixed12} and \eqref{momEqAloneBarFixed12} are all linear as functions of $(\eta_{12},\overline{\bu}_{12},\overline{\pi}_{12})$, it suffices to estimate their right-hand sides and substitute these estimates into the corresponding right-hand terms in \eqref{energyEstLinear}. Precisely, we estimate the following integrals:
\begin{align*}
&K_1:=
\int_{I_*}\Vert \partial_t [h_{\zeta_1}(\overline{\mathbf{w}}_1)- h_{\zeta_2}(\overline{\mathbf{w}}_2)] \Vert_{W^{-1,2}(\Omega)}^2\dt,
\\&
K_2:=
  \int_{I_*}\Vert h_{\zeta_1}(\overline{\mathbf{w}}_1)- h_{\zeta_2}(\overline{\mathbf{w}}_2)\Vert_{W^{1,2}(\Omega)}^2\dt,
 \\&
K_3:=
  \int_{I_*}\Vert \mathbf{h}_{\zeta_1}(\overline{\mathbf{w}}_1)- \mathbf{h}_{\zeta_2}(\overline{\mathbf{w}}_2)\Vert_{L^2(\Omega)}^2\dt,
 \\&
K_4:=
\int_{I_*}\Vert \mathbf{H}_{\zeta_1}(\overline{\mathbf{w}}_1, \overline{q}_1)- \mathbf{H}_{\zeta_2}(\overline{\mathbf{w}}_2, \overline{q}_2)\Vert_{W^{1,2}(\Omega)}^2\dt.
\end{align*}
The estimates for these $K_i$'s can be obtained in the same manner as their corresponding $\overline{K}_i$'s above when showing that the mapping $\mathcal{T}$ maps the ball into itself. However, we proceed to give a summary of the various estimates.

To estimate $K_1$, we need some preliminary estimates. Recalling the definition of $h_{\eta}(\overline\bu)$ at the beginning of Subsection \ref{Sectionlinear}, we write
\begin{align*}
\partial_t[h_{\zeta_1}(\overline{\mathbf{w}}_1)
-
h_{\zeta_2}(\overline{\mathbf{w}}_2)]
&=
 (\mathbf{B}_{\eta_0}-\mathbf{B}_{\zeta_1}):\partial_t\nabx( \overline{\mathbf{w}}_1
-
\overline{\mathbf{w}}_2)
+
(\mathbf{B}_{\zeta_2}-\mathbf{B}_{\zeta_1}):\partial_t \nabx\overline{\mathbf{w}}_2
\\
&\quad +\partial_t(\mathbf{B}_{\eta_0}-\mathbf{B}_{\zeta_1}):\nabx( \overline{\mathbf{w}}_1
-
\overline{\mathbf{w}}_2)
+
\partial_t(\mathbf{B}_{\zeta_2}-\mathbf{B}_{\zeta_1}):\nabx \overline{\mathbf{w}}_2.
\end{align*}
Now, just as in \eqref{k1ax}, we obtain from
 \eqref{210and212}--\eqref{218} and the continuous embedding \eqref{eq:emb1} that
\begin{equation*}
\begin{aligned}
\label{k1a}
&\int_{I_*}\big(\Vert (\mathbf{B}_{\eta_0}-\mathbf{B}_{\zeta_1}):\partial_t\nabx( \overline{\mathbf{w}}_1
-
\overline{\mathbf{w}}_2) \Vert_{W^{-1,2}_{\bx}}
^2
+
\Vert (\mathbf{B}_{\zeta_1}-\mathbf{B}_{\zeta_2})\partial_t\nabx
\overline{\mathbf{w}}_2 \Vert_{W^{-1,2}_{\bx}}^2
\big)
\dt
\\&
\lesssim
T^{1/4}_*
\Vert (\zeta_1, \overline{\mathbf{w}}_1, \overline{q}_1)-(\zeta_2, \overline{\mathbf{w}}_2, \overline{q}_2)\Vert_{X_{I_*}}^2.
\end{aligned}
\end{equation*}
Also, as in \eqref{k1cx}, we obtain
\begin{equation*}
\begin{aligned}
\label{k1c}
&\int_{I_*}\big(\Vert\partial_t (\mathbf{B}_{\eta_0}-\mathbf{B}_{\zeta_1}):\nabx( \overline{\mathbf{w}}_1
-
\overline{\mathbf{w}}_2) \Vert_{W^{-1,2}_\bx}^2
+
\Vert\partial_t (\mathbf{B}_{\zeta_1}-\mathbf{B}_{\zeta_2}):\nabx
\overline{\mathbf{w}}_2 \Vert_{W^{-1,2}_\bx}^2
\big)
\dt
\\&
\lesssim
T^{1/2}_*
\Vert (\zeta_1, \overline{\mathbf{w}}_1, \overline{q}_1)-(\zeta_2, \overline{\mathbf{w}}_2, \overline{q}_2)\Vert_{X_{I_*}}^2,
\end{aligned}
\end{equation*} 
and thus,
\begin{equation}
\begin{aligned}
\label{k1final}
K_1
\lesssim T^{1/2}_*
\Vert (\zeta_1, \overline{\mathbf{w}}_1, \overline{q}_1)-(\zeta_2, \overline{\mathbf{w}}_2, \overline{q}_2)\Vert_{X_{I_*}}^2.
\end{aligned}
\end{equation}
To estimate $K_2$, we first note that
\begin{equation*}
	\begin{aligned}
&\vert
 h_{\zeta_1}(\overline{\mathbf{w}}_1)
-
h_{\zeta_2}(\overline{\mathbf{w}}_2)
\vert
+
\vert
\nabx[h_{\zeta_1}(\overline{\mathbf{w}}_1)
-
h_{\zeta_2}(\overline{\mathbf{w}}_2)]
\vert
\\
&\lesssim
\vert (\mathbf{B}_{\eta_0}-\mathbf{B}_{\zeta_1}) :\nabx( \overline{\mathbf{w}}_1
-
\overline{\mathbf{w}}_2)
\vert
+
\vert
(\mathbf{B}_{\zeta_1}-\mathbf{B}_{\zeta_2}) :\nabx\overline{\mathbf{w}}_2
\vert
\\&
\quad+
\vert (\mathbf{B}_{\eta_0}-\mathbf{B}_{\zeta_1}) :\nabx^2( \overline{\mathbf{w}}_1
-
\overline{\mathbf{w}}_2)
\vert
+
\vert
(\mathbf{B}_{\zeta_1}-\mathbf{B}_{\zeta_2}) :\nabx^2\overline{\mathbf{w}}_2
\vert
\\&
\quad+
\vert \nabx(\mathbf{B}_{\eta_0}-\mathbf{B}_{\zeta_1}):\nabx( \overline{\mathbf{w}}_1
-
\overline{\mathbf{w}}_2)
\vert
+
\vert
\nabx(\mathbf{B}_{\zeta_1}-\mathbf{B}_{\zeta_2}):\nabx \overline{\mathbf{w}}_2
\vert.
\end{aligned}
\end{equation*}
Similar to \eqref{K2ax}, we use \eqref{eq:emb1} and \eqref{210and212}--\eqref{218} and obtain
\begin{equation}
\begin{aligned}
\label{K2a}
&\int_{I_*}\big(\Vert   (\mathbf{B}_{\eta_0}-\mathbf{B}_{\zeta_1}) :\nabx( \overline{\mathbf{w}}_1
-
\overline{\mathbf{w}}_2)\Vert_{L^2_\bx}^2
+
\Vert  (\mathbf{B}_{\zeta_1}-\mathbf{B}_{\zeta_2}) :\nabx\overline{\mathbf{w}}_2\Vert_{L^2_\bx}^2
\big)\dt
\\
&\quad+
\int_{I_*}\big(\Vert   (\mathbf{B}_{\eta_0}-\mathbf{B}_{\zeta_1}) :\nabx^2( \overline{\mathbf{w}}_1
-
\overline{\mathbf{w}}_2)\Vert_{L^2_\bx}^2
+
\Vert  (\mathbf{B}_{\zeta_1}-\mathbf{B}_{\zeta_2}) :\nabx^2\overline{\mathbf{w}}_2\Vert_{L^2_\bx}^2
\big)\dt
\\&\lesssim
T^{1/4}_*
\Vert (\zeta_1, \overline{\mathbf{w}}_1, \overline{q}_1)-(\zeta_2, \overline{\mathbf{w}}_2, \overline{q}_2)\Vert_{X_{I_*}}^2.
\end{aligned}
\end{equation}
As in \eqref{K2bx}, we obtain
\begin{equation}
\begin{aligned}
\label{K2b}
 &\int_{I_*}\big(\Vert  \nabx (\mathbf{B}_{\eta_0}-\mathbf{B}_{\zeta_1})\nabx( \overline{\mathbf{w}}_1
-
\overline{\mathbf{w}}_2)\Vert_{L^2_\bx}^2
+
\Vert  \nabx(\mathbf{B}_{\zeta_1}-\mathbf{B}_{\zeta_2}) \nabx\overline{\mathbf{w}}_2\Vert_{L^2_\bx}^2
\big)\dt
\\&\lesssim
T^{1/4}_*
\Vert (\zeta_1, \overline{\mathbf{w}}_1, \overline{q}_1)-(\zeta_2, \overline{\mathbf{w}}_2, \overline{q}_2)\Vert_{X_{I_*}}^2.
\end{aligned}
\end{equation}
We obtain from \eqref{K2a} and \eqref{K2b} that
\begin{align}
\label{k2final}
K_2
\lesssim
 T^{1/4}_* 
\Vert (\zeta_1,\overline{\mathbf{w}}_1, \overline{q}_1) - (\zeta_2,\overline{\mathbf{w}}_2, \overline{q}_2) \Vert_{X_{I_*}}^2.
\end{align} 
To estimate $K_3$, we need some preliminary estimates. First of all, note that
\begin{align*}
\vert
\mathbf{h}_{\zeta_1}(\overline{\mathbf{w}}_1)
-
\mathbf{h}_{\zeta_2}(\overline{\mathbf{w}}_2)
\vert
&\lesssim
\vert (J_{\eta_0}-J_{\zeta_1})\partial_t( \overline{\mathbf{w}}_1
-
\overline{\mathbf{w}}_2)
\vert
+
\vert
(J_{\zeta_1}-J_{\zeta_2})\partial_t \overline{\mathbf{w}}_2
\vert
\\&
\quad+
\big\vert
J_{\zeta_1} \nabx (\overline{\mathbf{w}}_1 -\overline{\mathbf{w}}_2 ) \partial_t \Psi_{\zeta_1}^{-1}\circ \Psi_{\zeta_1} 
\big\vert
+
\big\vert
\big(J_{\zeta_1}
-
J_{\zeta_2} \big) \nabx \overline{\mathbf{w}}_2  \partial_t \Psi_{\zeta_1}^{-1}\circ \Psi_{\zeta_1} 
\big\vert
\\&
\quad+
\big\vert
J_{\zeta_2} \nabx \overline{\mathbf{w}}_2  \partial_t \big(\Psi_{\zeta_1}^{-1}\circ \Psi_{\zeta_1} 
-
\Psi_{\zeta_2}^{-1}\circ \Psi_{\zeta_2} \big)
\big\vert
\\&
\quad+
\big\vert
J_{\zeta_1}\big( \nabx \Psi_{\zeta_1}^{-1}\circ \Psi_{\zeta_1}\big)^\intercal \nabx (\overline{\mathbf{w}}_1 -\overline{\mathbf{w}}_2 ) \overline{\mathbf{w}}_1 
\big\vert
\\&
\quad+
\big\vert
\big[J_{\zeta_1}\big( \nabx \Psi_{\zeta_1}^{-1}\circ \Psi_{\zeta_1}\big)^\intercal
-
J_{\zeta_2}\big( \nabx \Psi_{\zeta_2}^{-1}\circ \Psi_{\zeta_2}\big)^\intercal \big] \nabx \overline{\mathbf{w}}_2  \overline{\mathbf{w}}_1 
\big\vert
\\&
\quad+
\big\vert
J_{\zeta_2}\big( \nabx \Psi_{\zeta_2}^{-1}\circ \Psi_{\zeta_2}\big)^\intercal \nabx \overline{\mathbf{w}}_2  (\overline{\mathbf{w}}_1-\overline{\mathbf{w}}_2)
\big\vert
\\&
\quad+
\vert
J_{\zeta_1} (\bff\circ \Psi_{\zeta_1} -
\bff\circ \Psi_{\zeta_2}  )
\vert
+
\vert
(J_{\zeta_1}-J_{\zeta_2}) 
\bff\circ \Psi_{\zeta_2}  
\vert.
\end{align*}
As in \eqref{K3ax}, using \eqref{eq:emb1} and
\eqref{210and212}--\eqref{218} we have
\begin{equation}
\begin{aligned}
\label{K3a}
&\int_{I_*}\big(\Vert   (J_{\eta_0}-J_{\zeta_1})\partial_t( \overline{\mathbf{w}}_1
-
\overline{\mathbf{w}}_2)\Vert_{L^2_\bx}^2
+
\Vert  (J_{\zeta_1}-J_{\zeta_2})\partial_t
\overline{\mathbf{w}}_2\Vert_{L^2_\bx}^2
\big)\dt
\\&\lesssim
T^{1/4}_*
\Vert (\zeta_1,\overline{\mathbf{w}}_1, \overline{q}_1) - (\zeta_2,\overline{\mathbf{w}}_2, \overline{q}_2) \Vert_{X_{I_*}}^2.
\end{aligned}
\end{equation}
Similar to \eqref{K3bx}, we obtain
\begin{equation}
\begin{aligned}
\label{K3b}
&\int_{I_*}\Vert   J_{\zeta_1}  \nabx (\overline{\mathbf{w}}_1 -\overline{\mathbf{w}}_2 ) \partial_t \Psi_{\zeta_1}^{-1}\circ \Psi_{\zeta_1} \Vert_{L^2_\bx}^2\dt
+
\int_{I_*}\big\Vert  (J_{\zeta_1} 
-
J_{\zeta_2}) \nabx \overline{\mathbf{w}}_2  \partial_t \Psi_{\zeta_1}^{-1}\circ \Psi_{\zeta_1} 
\big\Vert_{L^2_\bx}^2\dt
\\&\quad+
\int_{I_*}\big\Vert  J_{\zeta_2} \nabx \overline{\mathbf{w}}_2  \partial_t \big(\Psi_{\zeta_1}^{-1}\circ \Psi_{\zeta_1} 
-
\Psi_{\zeta_2}^{-1}\circ \Psi_{\zeta_2} \big)
\big\Vert_{L^2_\bx}^2\dt
\\&
\lesssim
T^{1/2}_*
\Vert (\zeta_1,\overline{\mathbf{w}}_1, \overline{q}_1) - (\zeta_2,\overline{\mathbf{w}}_2, \overline{q}_2) \Vert_{X_{I_*}}^2.
\end{aligned}
\end{equation}
Next, as in \eqref{K3dx},
we obtain
\begin{equation}
\begin{aligned}
\label{K3d}
&\int_{I_*}\Vert   J_{\zeta_1}\big( \nabx \Psi_{\zeta_1}^{-1}\circ \Psi_{\zeta_1}\big)^\intercal \nabx (\overline{\mathbf{w}}_1 -\overline{\mathbf{w}}_2 ) \overline{\mathbf{w}}_1  \Vert_{L^2_\bx}^2\dt
\\&\quad+
\int_{I_*}\big\Vert  
\big[J_{\zeta_1}\big( \nabx \Psi_{\zeta_1}^{-1}\circ \Psi_{\zeta_1}\big)^\intercal
-
J_{\zeta_2}\big( \nabx \Psi_{\zeta_2}^{-1}\circ \Psi_{\zeta_2}\big)^\intercal \big] \nabx \overline{\mathbf{w}}_2  \overline{\mathbf{w}}_1   \big\Vert_{L^2_\bx}^2\dt
\\&\quad
+\int_{I_*}\Vert
J_{\zeta_2}\big( \nabx \Psi_{\zeta_2}^{-1}\circ \Psi_{\zeta_2}\big)^\intercal \nabx \overline{\mathbf{w}}_2  (\overline{\mathbf{w}}_1-\overline{\mathbf{w}}_2)
\Vert_{L^2_\bx}^2\dt
\\&
\lesssim
T^{1/4}_*
\Vert (\zeta_1,\overline{\mathbf{w}}_1, \overline{q}_1) - (\zeta_2,\overline{\mathbf{w}}_2, \overline{q}_2) \Vert_{X_{I_*}}^2.
\end{aligned}
\end{equation}
By following the same argument as in \eqref{K3fx}, we obtain
\begin{equation}
\begin{aligned}
\label{K3f}
&\int_{I_*}\big(\Vert 
J_{\zeta_1} (\bff\circ \Psi_{\zeta_1} -
\bff\circ \Psi_{\zeta_2}  )
 \Vert_{L^2_\bx}^2
 +
 \Vert 
(J_{\zeta_1}-J_{\zeta_2}) 
\bff\circ \Psi_{\zeta_2} 
 \Vert_{L^2_\bx}^2
 \big)\dt
\\&
\lesssim
T^{1/4}_*
\Vert (\zeta_1,\overline{\mathbf{w}}_1, \overline{q}_1) - (\zeta_2,\overline{\mathbf{w}}_2, \overline{q}_2) \Vert_{X_{I_*}}^2.
\end{aligned}
\end{equation}
It follows from \eqref{K3a}--\eqref{K3f} that
\begin{equation}
\begin{aligned}
\label{k3final}
K_3
\lesssim  T^{1/2}_* 
\Vert (\zeta_1, \overline{\mathbf{w}}_1, \overline{q}_1)-(\zeta_2, \overline{\mathbf{w}}_2, \overline{q}_2)\Vert_{X_{I_*}}^2.
\end{aligned}
\end{equation}
Our next goal is to estimate $K_4$. First of all, note that
\begin{equation}
\begin{aligned}
\nonumber
\vert \mathbf{H}_{\zeta_1}(\overline{\mathbf{w}}_1, \overline{q}_1)- \mathbf{H}_{\zeta_2}(\overline{\mathbf{w}}_2, \overline{q}_2)\vert
&\lesssim
\vert (\mathbf{A}_{\eta_0} -\mathbf{A}_{\zeta_1})\nabx(\overline{\mathbf{w}}_1 - \overline{\mathbf{w}}_2)\vert
+
\vert (\mathbf{A}_{\zeta_1} -\mathbf{A}_{\zeta_2})\nabx \overline{\mathbf{w}}_2\vert
\\&
\quad+
\vert (\mathbf{B}_{\eta_0}-\mathbf{B}_{\zeta_1}) (\overline{q}_1 - \overline{q}_2)\vert
+
\vert (\mathbf{B}_{\zeta_1}-\mathbf{B}_{\zeta_2}) \overline{q}_2\vert,
\end{aligned}
\end{equation}
holds uniformly. By the same argument as in \eqref{K4ax}, by using \eqref{eq:emb1}, \eqref{eq:emb3} and \eqref{eq:emb2},
we obtain from \eqref{210and212}--\eqref{218} that
\begin{equation}
\begin{aligned}
\label{K4a}
&\int_{I_*}\big(\Vert (\mathbf{A}_{\eta_0} -\mathbf{A}_{\zeta_1})\nabx(\overline{\mathbf{w}}_1 - \overline{\mathbf{w}}_2)\Vert_{W^{1,2}_\bx}^2\dt
+
\Vert (\mathbf{A}_{\zeta_1} -\mathbf{A}_{\zeta_2})\nabx  \overline{\mathbf{w}}_2\Vert_{W^{1,2}_\bx}^2
\big)\dt
\\&\lesssim
 T^{1/4}_* 
\Vert (\zeta_1, \overline{\mathbf{w}}_1, \overline{q}_1)-(\zeta_2, \overline{\mathbf{w}}_2, \overline{q}_2)\Vert_{X_{I_*}}^2.
\end{aligned}
\end{equation}
Finally, we adopt the approach leading to \eqref{K4bx} and obtain
\begin{equation}
\begin{aligned}
\label{K4b}
&\int_{I_*}\big(\Vert (\mathbf{B}_{\eta_0}-\mathbf{B}_{\zeta_1}) (\overline{q}_1 - \overline{q}_2)\Vert_{W^{1,2}_\bx}^2
+
\int_{I_*}\Vert (\mathbf{B}_{\zeta_1}-\mathbf{B}_{\zeta_2})  \overline{q}_2\Vert_{W^{1,2}_\bx}^2
\big)\dt
 \\&\lesssim
T^{1/3}_*
\Vert (\zeta_1, \overline{\mathbf{w}}_1, \overline{q}_1)-(\zeta_2, \overline{\mathbf{w}}_2, \overline{q}_2)\Vert_{X_{I_*}}^2.
\end{aligned}
\end{equation}
By using \eqref{K4a} and \eqref{K4b}, it follows that
\begin{align}
\label{k4final}
K_4
\lesssim
T^{1/3}_*
\Vert (\zeta_1,\overline{\mathbf{w}}_1, \overline{q}_1) - (\zeta_2,\overline{\mathbf{w}}_2, \overline{q}_2) \Vert_{X_{I_*}}^2.
\end{align}
By collecting the estimates \eqref{k1final}, \eqref{k2final}, \eqref{k3final} and \eqref{k4final} together, we have
\begin{align*}
\Vert \mathcal{T}(\zeta_1, \overline{\mathbf{w}}_1, \overline{q}_1)
-
\mathcal{T}(\zeta_2, \overline{\mathbf{w}}_2, \overline{q}_2)\Vert_{X_{I_*}}^2
&\leq 
c  T^{1/2}_*  \Vert (\zeta_1, \overline{\mathbf{w}}_1, \overline{q}_1)-(\zeta_2, \overline{\mathbf{w}}_2, \overline{q}_2)\Vert_{X_{I_*}}^2.
\end{align*}
Choosing $T_*$ in $I_*=(0,T_*)$ so that $ T^{1/2}_* <c^{-1}$ yields the desired contraction property.
%
\end{proof}

\section{The acceleration estimate}
\label{sec:reg}
In this section, we prove the acceleration estimate for a solution satisfying the Serrin condition. In order to make the proof rigorous, we work with a strong solution, the existence of which is  guaranteed locally in time by Theorem \ref{thm:fluidStructureWithoutFK}. Eventually, we compare weak and strong solution by means of Theorem \ref{thm:weakstrong} and compare the result for a weak solution in Theorem \ref{thm:main}.
Let $(\bu,\eta)$ be a strong solution \eqref{1}--\eqref{interfaceCond} in the sense of Definition \ref{def:strongSolution} with data $(\bff, g, \eta_0, \eta_*, \bu_0)$, which is in particular a weak solution (see Definition \ref{def:weakSolution}) and thus satisfies the standard energy estimates
\begin{align}
\sup_{I^\ast}\|\bu\|_{L^2_\bx}^2+\int_{I^\ast}\|\nabla\bu\|_{L^2_\bx}^2\dt\lesssim\,C_0,\label{eq:aprioriu}\\
\sup_{I^\ast}\|\partial_t\eta\|_{L^2_\by}^2+\sup_{I^\ast}\|\Dely\eta\|_{L^2_\by}^2+\int_{I^\ast}\|\partial_t\naby\eta\|_{L^2_\by}^2\dt\lesssim\,C_0,\label{eq:apriorieta}
\end{align}
where
\begin{align*}
 C_0&:=\|\bu_0\|_{L^2_\bx}^2+\|\eta_\ast\|_{L^2_\by}^2+\|\Dely\eta_0\|_{L^2_\by}^2+\int_{I^\ast}\|\bff\|_{L^2_\bx}^2\dt+\int_{I^\ast}\|g\|_{L^2_\by}^2\dt.
\end{align*} 
The following acceleration estimate (or second order energy estimate) is one of the core results of the paper and directly leads to the main result in Theorem \ref{thm:main}. Under the Serrin condition, it holds uniformly in time allowing us to extend the local solution globally in time.
\begin{theorem}\label{prop2}
Suppose that the dataset
$(\bff, g, \eta_0, \eta_*, \bu_0)$
satisfies \eqref{dataset} and
\eqref{datasetImproved}.
Suppose that $(\eta,\bu)$ is a strong solution to \eqref{1}--\eqref{interfaceCond} in the sense of Definition \ref{def:strongSolution}. 
Furthermore, for some $r\in[2,\infty)$ and $s\in(3,\infty]$ we set
\begin{align}\label{eq:regu}
C_1&:=\|\bu\|_{L^r(I;L^s(\Omega_\eta))},\quad \tfrac{2}{r}+\tfrac{3}{s}\leq1,\\
C_2&:=\|\eta\|_{ L^\infty(I;C^{1}(\omega))}.\label{eq:regeta}
\end{align}
Finally, suppose that there is no degeneracy in the sense of \eqref{eq:1705}.
 Then we have the estimate
 \begin{equation}
\begin{aligned}
\label{est:reg}
&\sup_{I_\ast}\int_\omega
\big(\vert \partial_t\naby \eta\vert^2 
+
\vert \naby\Dely \eta\vert^2
\big)
\dy
+
\sup_{I_\ast}\int_{\Omega_\eta}\vert\nabx  \bu \vert^2\dx
\\&\quad+
\int_{I_\ast}\int_\omega
\big(\vert \partial_t\Dely \eta \vert^2 + \vert \partial_t^2 \eta\vert^2
 \big)\dy\dt
 +
\int_{I_\ast}\int_{\Omega_\eta}\big( \vert \nabx^2 \bu\vert^2 +\vert \partial_t \bu \vert^2  + \vert \nabx  \pi\vert^2
 \big)\dx\dt
 \\&\lesssim
 \int_\omega\big( 
 \vert \naby\eta_*\vert^2
 +
  \vert \naby\Dely\eta_0\vert^2
  \big)\dy
  +
  \int_{\Omega_{\eta_0}} 
   \vert\nabx \bu_0\vert^2 \dx
   \\&\quad+ \int_{I_\ast}\int_{\Omega_\eta}  \vert \bff\vert^2  \dx\dt+
 \int_{I_\ast}\int_\omega  \vert g\vert^2  \dy\dt,
\end{aligned}
\end{equation}
where the hidden constant depends only on $C_0, C_1$ and $C_2$.
\end{theorem}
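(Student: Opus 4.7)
The plan is to simultaneously test the momentum equation with (an extension of) the material acceleration and the shell equation with $\partial_t^2\eta$ and $-\partial_t\Dely\eta$, so that the boundary stress terms cancel via the coupling $\bu\circ\bm{\varphi}_\eta=\partial_t\eta\bn$. First I differentiate this kinematic identity in time and observe that $\partial_t\bu$ is divergence-free but its trace on $\partial\Omega_\eta$ is not simply $\partial_t^2\eta\bn$; hence I replace $\partial_t\bu$ by a test field $\bm{\phi}$ that is solenoidal, coincides with $\partial_t\bu$ in the interior, and whose trace equals $\partial_t^2\eta\bn\circ\bm{\varphi}_\eta^{-1}$. The rigorous justification uses the smooth approximation from Lemma~\ref{lem:smooth} and the extension machinery of Section~\ref{sec:ext}, together with the fact that $\eta\in L^\infty(I;C^1(\omega))$ makes $\Omega_{\eta(t)}$ uniformly Lipschitz. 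After Reynolds transport for the moving volume term, the resulting identity has the structure
\[
\tfrac{\dd}{\dt}\Bigl(\tfrac12\!\!\int_{\Omega_\eta}\!\!|\nabx\bu|^2\dx+\tfrac12\!\!\int_\omega\!\!|\partial_t\naby\eta|^2+|\naby\Dely\eta|^2\dy\Bigr)+\!\!\int_{\Omega_\eta}\!\!|\partial_t\bu|^2\dx+\!\!\int_\omega\!\!|\partial_t^2\eta|^2+|\partial_t\Dely\eta|^2\dy\le \mathcal R,
\]
where $\mathcal R$ gathers $\bff$, $g$, the convective term $\int_{\Omega_\eta}(\bu\cdot\nabx)\bu\cdot\partial_t\bu\dx$, and lower-order corrections arising from commutators between $\partial_t$ and the moving boundary (time-derivatives of the extension, and normal-component geometric terms controlled by $\|\eta\|_{L^\infty_t C^1_\by}=C_2$).

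Next I invoke the stationary Stokes regularity of Theorem~\ref{thm:stokessteady} pointwise in time on $\Omega_{\eta(t)}$, viewing \eqref{1'} as a Stokes system with right-hand side $\bff-\partial_t\bu-(\bu\cdot\nabx)\bu$ and boundary datum $\partial_t\eta\bn\circ\bm{\varphi}_\eta^{-1}$. Remark~\ref{rem:stokes} guarantees applicability: the hypothesis $\eta\in L^\infty(I;C^1(\omega))$ combined with the non-degeneracy \eqref{eq:1705} yields a uniformly small local Lipschitz constant (after rotating coordinates in each boundary chart, the local graph $\phi$ satisfies $\phi(z_0)=0,\partial_z\phi(z_0)=0$). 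This gives
\[
\|\nabx^2\bu\|_{L^2_\bx}^2+\|\nabx\pi\|_{L^2_\bx}^2\lesssim\|\partial_t\bu\|_{L^2_\bx}^2+\|(\bu\cdot\nabx)\bu\|_{L^2_\bx}^2+\|\bff\|_{L^2_\bx}^2+\|\partial_t\eta\|_{W^{3/2,2}_\by}^2,
\]
with constant depending only on $C_2$.

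The convective term is controlled via the Serrin assumption using the Gagliardo--Nirenberg interpolation
\[
\|(\bu\cdot\nabx)\bu\|_{L^2_\bx}^2\lesssim \|\bu\|_{L^s_\bx}^2\,\|\nabx\bu\|_{L^2_\bx}^{2-6/s}\,\|\nabx^2\bu\|_{L^2_\bx}^{6/s}.
\]
Using Young's inequality with exponent $s/3$, the $\|\nabx^2\bu\|^2$-factor gets absorbed by the left-hand side of the Stokes estimate and what remains is $\|\bu\|_{L^s_\bx}^r\,\|\nabx\bu\|_{L^2_\bx}^{2}$, since the Serrin exponent relation $\tfrac{2}{r}+\tfrac{3}{s}=1$ yields precisely $r=\tfrac{2s}{s-3}$. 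The pressure boundary contribution arising from testing \eqref{2'} against $-\partial_t\Dely\eta$ is treated as in \eqref{findPressBar}, absorbing $\|\pi\|_{W^{1,2}_\bx}$ by Stokes and splitting the constant part $c_\pi$ via \eqref{eq:pressure}; the $C^1$-bound on $\eta$ ensures the determinant and normal factors in \eqref{eq:pressure} stay bounded uniformly.

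Combining everything gives a differential inequality of Grönwall type
\[
\tfrac{\dd}{\dt}\mathcal E_2(t)+\mathcal D_2(t)\lesssim\bigl(1+\|\bu(t)\|_{L^s_\bx}^r\bigr)\mathcal E_2(t)+\|\bff(t)\|_{L^2_\bx}^2+\|g(t)\|_{L^2_\by}^2,
\]
where $\mathcal E_2$ and $\mathcal D_2$ are the supremum- and dissipation-parts of the left-hand side of \eqref{est:reg}. Since $t\mapsto\|\bu(t)\|_{L^s_\bx}^r\in L^1(I)$ by \eqref{eq:regu}, Grönwall closes the estimate with a constant depending only on $C_0,C_1,C_2$. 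The main obstacle is precisely the bookkeeping around the moving-boundary test function: making $\partial_t\bu$ admissible on $\Omega_{\eta(t)}$ (solenoidal correction plus commutator terms), handling the non-solenoidal pressure term for curved reference geometry (which is where working with the pressure function and the Bogovkij-type framework of Theorem~\ref{thm:ndBog} matters), and verifying the small-Lipschitz-constant hypothesis for the Stokes estimate locally in space-time.
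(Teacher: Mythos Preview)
Your high-level strategy matches the paper's: couple a material-acceleration test on the fluid side with $\partial_t^2\eta$ and $-\partial_t\Dely\eta$ on the shell side, invoke Theorem~\ref{thm:stokessteady} pointwise in time (justified via Remark~\ref{rem:stokes} under the $C^1$ hypothesis on $\eta$), control the convective term by Serrin plus Gagliardo--Nirenberg, and close by Gr\"onwall. The Serrin exponent bookkeeping you sketch is correct.

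The description of the fluid test function, however, is where the argument slips, and this is exactly the crux of the curved-geometry case. You ask for a solenoidal $\bm{\phi}$ that coincides with $\partial_t\bu$ in the interior and has trace $\partial_t^2\eta\,\bfn\circ\bfvarphi_\eta^{-1}$; but differentiating the kinematic condition gives $(\partial_t\bu)\circ\bfvarphi_\eta=\partial_t^2\eta\,\bfn-(\bu\cdot\nabla)\bu\circ\bfvarphi_\eta$, so the required trace correction is $(\bu\cdot\nabla)\bu|_{\partial\Omega_\eta}$ and cannot be confined to a boundary layer. The paper tests instead with $\partial_t\bu+\mathscr F_\eta(\partial_t\eta\bfn)\cdot\nabla\bu$, which differs from $\partial_t\bu$ throughout $\Omega_\eta$ and is deliberately \emph{not} solenoidal: as the introduction explains, a solenoidal extension with the regularity-gaining property of Lemma~\ref{lem:3.8} is only available for flat reference geometries. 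The price for using the non-solenoidal $\mathscr F_\eta$ is that the pressure survives in the test identity (term $\mathrm V$ in \eqref{eq:0302}) and must be estimated via \eqref{eq:regstokes} together with the splitting $\pi=\pi_0+c_\pi$ from \eqref{eq:pressure}. Your closing reference to a Bogovskij correction is not what is done here (Theorem~\ref{thm:ndBog} enters only in Section~\ref{sec:weakStrong}), and it contradicts your earlier insistence on a solenoidal $\bm{\phi}$. Finally, the Gr\"onwall weight is not only $\|\bu\|_{L^s_\bx}^r$: the extension and boundary terms $\mathrm{II}$--$\mathrm{VI}$ in \eqref{eq:0302} also produce $\|\partial_t\eta\|_{W^{1,2}_\by}^2\|\nabla\bu\|_{L^2_\bx}^2$, integrable thanks to the first-order dissipation in \eqref{eq:apriorieta}.
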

\begin{proof}
We use
$
\bfphi=\partial_t\bu+\mathscr F_\eta(\partial_t\eta\bfn)\cdot\nabla\bu
$
and $\phi=\partial_t^2\eta$ as test functions for the fluid and shell equations respectively. Here $\mathscr F_\eta$ is the extension operator introduced in Section \ref{sec:ext}. 
 From the momentum equation in the strong form \eqref{1'}, we obtain for $t\in I^\ast$
\begin{align*}
&\int_{0}^t\int_{\Omega_\eta}\big(\partial_t\bu+\bu\cdot\nabla\bu\big)\cdot\big(\partial_t\bu+\mathscr F_\eta(\partial_t\eta\bfn)\cdot\nabla\bu\big)\dx\ds\\
&=\int_{0}^t\int_{\Omega_\eta}\Div\bftau\cdot\big(\partial_t\bu+\mathscr F_\eta(\partial_t\eta\bfn)\cdot\nabla\bu\big)\dx\ds+\int_{0}^t\int_{\Omega_\eta}\bff\cdot\big(\partial_t\bu+\mathscr F_\eta(\partial_t\eta\bfn)\cdot\nabla\bu)\big)\dx\ds,
\end{align*}
where $\bftau=\nabla\bu+\nabla\bu^\intercal-\pi\mathbb I_{3\times 3}$ is the Cauchy stress.
We now aim at  integrating by parts in the first term on the right-hand side
obtaining
\begin{align}\nonumber
&\int_{0}^t\int_{\Omega_\eta}\Div\bftau\cdot\big(\partial_t\bu+\mathscr F_\eta(\partial_t\eta\bfn)\cdot\nabla\bu\big)\dx\ds\\&=-\tfrac{1}{2}\int_{\Omega_\eta}|\nabla\bu|^2\dx+\tfrac{1}{2}\int_{\Omega_\eta}|\nabla\bu_0|^2\dx+\int_{I_\ast}\int_{\partial\Omega_\eta}(\partial_t\eta\bfn)\circ\bfvarphi_\eta^{-1}\cdot\bfn_\eta\circ\bfvarphi_\eta^{-1}|\nabla\bu|^2\dd\mathcal H^2\dt\nonumber\\
&\quad-\int_{0}^t\int_{\Omega_\eta}\nabla\bu:\nabla\big(\mathscr F_\eta(\partial_t\eta\bfn)\cdot\nabla\bu\big)\dx\ds+\int_{0}^t\int_{\Omega_\eta}\pi\,\Div\big(\mathscr F_\eta(\partial_t\eta\bfn)\nabla\bu\big)\dx\ds\nonumber\\
&\quad-\int_0^t\int_\omega \bfF\,\partial_t^2\eta\dy\ds\label{eq:well}
\end{align}
with 
\begin{equation}\label{Fdef}
\bfF=-\bfn^\intercal \bftau\circ\bfvarphi_\eta\bn_\eta|\det(\naby\bfvarphi_\eta)|.\end{equation}
Note that we also used Reynold's transport theorem (applied to $\int_{\Omega_{\eta(t)}}|\nabla\bu(t)|^2\dx$).
Although all the terms in equation \eqref{eq:well} are well-defined for a strong solution $(\eta,\bu)$ this is not true for its derivation. Hence we apply Lemma \ref{lem:smooth} to obtain a smooth approximation which fully justifies \eqref{eq:well} after passing to the limit.

Multiply the structure equation \eqref{2'} by $\partial_t^2\eta$ we obtain from the formal computation \eqref{3.10} that
\begin{equation*}
	\begin{aligned}
&\int_{I_\ast}\int_\omega|\partial_t^2\eta|^2\dy\dt
+
\sup_{I_\ast}\int_\omega|\partial_t\naby\eta|^2\dy
\\&\lesssim
\int_\omega|\naby\eta_*|^2\dy
+
\sup_{I_\ast}\int_\omega|\naby\Dely\eta|^2\dy+\int_{I_\ast}\int_\omega|\partial_t\Dely\eta|^2\dy\dt+\int_{I_\ast}\int_\omega(g+\bfF)\,\partial_t^2\eta\dy\dt.
\end{aligned}
\end{equation*}
It can be made rigorous by means of a spatial regularization argument. Since we consider periodic boundary conditions a spatial convolution can be applied without further difficulty.
Combining both, using Young's inequality and writing
$$\mathscr F_\eta(\partial_t\eta\bfn)\cdot\nabla\bu=\bu\cdot\nabla\bu+\mathscr F_\eta(\partial_t\eta\bfn)\cdot\nabla\bu-\bu\cdot\nabla\bu,$$
we derive that
\begin{align}\label{eq:0302}
\begin{aligned}
&\sup_{I_\ast}\int_{\Omega_\eta}|\nabla\bu|^2\dx+\int_{I_\ast}\int_{\Omega_\eta}|\partial_t\bu+\bu\cdot\nabla\bu|^2\dx\dt
+\int_{I_\ast}\int_\omega|\partial_t^2\eta|^2\dy\dt+\sup_{I_\ast}\int_\omega|\partial_t\naby\eta|^2\dy
\\
&\lesssim\int_{I_\ast}\int_{\Omega_\eta}|\bu\cdot\nabla\bu|^2\dx\dt+\int_{I_\ast}\int_{\partial\Omega_\eta}(\partial_t\eta\bfn)\circ\bfvarphi_\eta^{-1}\cdot\bfn_\eta\circ\bfvarphi_\eta^{-1}|\nabla\bu|^2\dd\mathcal H^2\dt\\
&\quad-\int_{I_\ast}\int_{\Omega_\eta}\big(\partial_t\bu+\bu\cdot\nabla\bu\big)\cdot\big(\mathscr F_\eta(\partial_t\eta\bfn)\cdot\nabla\bu\big)\dx\dt\\
&\quad-\int_{I_\ast}\int_{\Omega_\eta}\nabla\bu:\big(\mathscr F_\eta(\partial_t\eta\bfn)^\intercal\nabla^2\bu+\nabla\mathscr F_\eta(\partial_t\eta\bfn)\nabla\bu^\intercal\big)\dx\dt\\
&\quad+\int_{I_\ast}\int_{\Omega_\eta}\pi\,\Div\big(\mathscr F_\eta(\partial_t\eta\bfn)\nabla\bu\big)\dx\dt+\int_{I_*}\int_{\Omega_\eta}\bff\cdot \mathscr F_{\eta}(\partial_t\eta\bfn)\nabla\bu\dx\dt\\
&\quad+\int_{I_\ast}\int_{\Omega_\eta}|\bff|^2\dx\dt+\int_{\Omega_{\eta_0}}|\nabla\bu_0|^2\dx
+\int_\omega|\naby\eta_*|^2\dy+\sup_{I_\ast}\int_\omega|\naby\Dely\eta|^2\dy\\
&\quad+\int_{I_\ast}\int_\omega|\partial_t\Dely\eta|^2\dy\dt+\int_{I_\ast}\int_\omega|g|^2\dy\dt\\
&=:\mathrm{I}+\dots+\mathrm{XII}.
\end{aligned}
\end{align}
Notice that in \eqref{eq:0302} the last six terms are already uncritical. In particular, XI will be obtained on the left hand side via a second test, see \eqref{eq:secondtest} below. Hence we start the estimate of the terms $\mathrm{I}$--$\mathrm{VI}$. 

In order to control the first term, we make use of Theorem \ref{thm:stokessteady}. Its application to the moving domain $\Omega_\eta$ has been justified in Remark \ref{rem:stokes}, we thereby have
\begin{align}\label{eq:regstokes}
\|\bu\|_{W^{2,2}_\bx}+\|\pi\|_{W^{1,2}_\bx}\lesssim\|\partial_t\bu+\bu\cdot\nabla\bu\|_{L^2_\bx}+\|\bff\|_{L^2_\bx}+\|\partial_t\eta\|_{W^{3/2,2}_\by},
\end{align}
uniformly in time with a constant depending on $C_2$ from \eqref{eq:regeta}. Note that we also used the estimate
\begin{align}\label{again}
\|\partial_t\eta\bfn\circ \bfvarphi_\eta^{-1}\|_{W^{3/2,2}_\by}\lesssim\|\partial_t\eta\|_{W^{3/2,2}_\by},
\end{align}
which is a consequence of \eqref{eq:apriorieta} and the definition ${\bfvarphi}_\eta={\bfvarphi}+ \eta{\bfn}$. In fact,
$\bfvarphi_\eta^{-1}$ is uniformly bounded in time in the space
of Sobolev multipliers on $W^{3/2,2}(\omega)$ by \eqref{eq:MSa} and \eqref{eq:MSb}  (together with the assumption $\partial_1\bfvarphi_\eta\times\partial_2\bfvarphi_\eta \neq0$) since $\eta$ is uniformly bounded in time even in $W^{2,2}(\omega)$ by \eqref{eq:apriorieta}.
Hence the transformation rule \eqref{lem:9.4.1} applies.
For every $\kappa>0$ and for $s\in(3,\infty]$, we estimate by using Sobolev's inequality (recalling that $\partial\Omega_\eta$ is Lipschitz uniformly in time with a constant controlled by $C_2$, cf. \eqref{eq:regeta}) and \eqref{eq:aprioriu}
\begin{align*}
\begin{aligned}
\mathrm{I}&\leq \int_{I_\ast}\|\bu\|^2_{L^s_\bx}\|\nabla\bu\|_{L^{\frac{2s}{s-2}}_\bx}^2\dt
\leq\,c\int_{I_\ast}\|\bu\|_{L^s_\bx}^2\|\nabx\bu\|^{\frac{2s-6}{s}}_{L^{2}_\bx}\|\nabla\bu\|_{W^{1,2}_\bx}^{\frac{6}{s}}\dt\\
&\leq\,c(\kappa)\int_{I_\ast}\|\bu\|_{L^s_\bx}^{\frac{2s}{s-3}}\|\nabx\bu\|^{2}_{L^{2}_\bx}\dt+\kappa\int_{I_\ast}\|\nabla\bu\|_{W^{1,2}_\bx}^2\dt\\
&\leq \,c(\kappa)\int_{I_\ast}\|\bu\|_{L^s_\bx}^{\frac{2s}{s-3}}\|\nabx\bu\|^{2}_{L^{2}_\bx}\dt+\kappa\int_{I_\ast}\big(\|\partial_t\bu+\bu\cdot\nabla\bu\|_{L^2_\bx}^2+\|\bff\|^2_{L^2_\bx}+\|\partial_t\eta\|^2_{W^{3/2,2}_\by}\big)\dt,
\end{aligned}
\end{align*}
where the first part of the $\kappa$-term can be absorbed in the left-hand side of \eqref{eq:0302}. 
Note that $r:=\frac{2s}{s-3}\in[2,\infty)$ since $s\in(3,\infty]$. The resulting constant depends on $C_1$ from \eqref{eq:regu}.

For the boundary integral $\mathrm{II}$ on the right-hand side of \eqref{eq:0302}, we have
\begin{equation*}\label{estimateII}
\begin{aligned}
\mathrm{II}&\leq \int_{I_*}\lVert\nabla\bu\rVert_{L^4(\partial\Omega_{\eta})}\lVert\nabla\bu\rVert_{L^{\frac{8}{3}}(\partial\Omega_{\eta})}\lVert\partial_t\eta\circ\bfvarphi_\eta^{-1}\rVert_{L^{\frac{8}{3}}(\partial\Omega_\eta)} \dt\\
&\lesssim  \int_{I_*}\lVert\nabla\bu\rVert_{W^{1/2, 2}(\partial\Omega_\eta)}\lVert\nabla\bu\rVert_{W^{1/4, 2}(\partial\Omega_\eta)}\lVert\partial_t\eta\circ\bfvarphi_\eta^{-1}\rVert_{W^{1/4,2}(\partial\Omega_\eta)}\dt\\
&\lesssim \int_{I_*}\lVert\nabla\bu\rVert_{W^{1,2}_\bx}\lVert\nabla\bu\rVert_{W^{3/4,2}_\bx}\lVert\partial_t\eta\rVert_{W^{1/4,2}_\by}\dt\\
&\lesssim \int_{I_*}\lVert\nabla\bu\rVert_{W^{1,2}_\bx}^{\frac{7}{4}}\lVert\nabla\bu\rVert_{L^2_\bx}^{\frac{1}{4}}\lVert\partial_t\eta\rVert_{L^2_\by}^{\frac{3}{4}}\lVert\partial_t\eta\rVert_{W^{1,2}_\by}^{\frac{1}{4}}\dt\\
&\leq \kappa\int_{I_*}\lVert\nabla\bu\rVert_{W^{1,2}_\bx}^2\dt+C(\kappa)\int_{I_*}\lVert\partial_t\eta\rVert_{W^{1,2}_\by}^2\lVert\nabla\bu\rVert_{L^2_\bx}^2\dt,
\end{aligned}
\end{equation*}
where we used that $\lVert\partial_t\eta\rVert_{L^2_\by}$ is uniformly bounded in time (see \eqref{eq:apriorieta}) and the embeddings $W^{1/2,2}(\partial\Omega_\eta)\hookrightarrow L^4(\partial\Omega_\eta)$ and $W^{1/4,2}(\partial\Omega_\eta)\hookrightarrow L^{8/3}(\partial\Omega_\eta)$, as well as the following interpolation inequalities:
\begin{equation}\label{interalways}
\begin{aligned}
\lVert f\rVert_{W^{\frac{1}{4},2}(\omega)}&\lesssim \lVert f\rVert_{L^2(\omega)}^{\frac{3}{4}}\lVert f\rVert_{W^{1,2}(\omega)}^{\frac{1}{4}},\\
\lVert f\rVert_{W^{\frac{3}{4},2}(\Omega_\eta)}&\lesssim \lVert f\rVert_{L^2(\Omega_\eta)}^{\frac{1}{4}}\lVert f\rVert_{W^{1,2}(\Omega_\eta)}^{\frac{3}{4}}.
\end{aligned}
\end{equation}

For the third term $\mathrm{III}$, we first have
\begin{align*}
\mathrm{III}&\leq\,\kappa\int_{I_\ast}\|\partial_t\bu+\bu\cdot\nabla\bu\|_{L^2_\bx}^2\dt
+c(\kappa)\int_{I_\ast}\|\nabla\bu\mathscr F_\eta(\partial_t\eta\bfn)\|^2_{L^2_\bx}\dt.
\end{align*}
We have the estimate for the second integrals above that
\begin{equation*}
\begin{aligned}
&\int_{I_\ast}\|\nabla\bu\mathscr F_\eta(\partial_t\eta\bfn)\|^2_{L^2_\bx}\dt\leq \int_{I_*}\lVert \mathscr F_\eta(\partial_t\eta\bfn)\rVert_{L^4_\bx}^2\lVert \nabla\bu\rVert_{L^4_\bx}^2\dt\\
&\lesssim \int_{I_\ast}\lVert \mathscr F_\eta(\partial_t\eta\bfn)\rVert_{W^{3/4,2}_\bx}^2\lVert\nabla\bu\rVert_{L^2_\bx}^{\frac{1}{2}}\lVert\nabla\bu\rVert_{W^{1,2}_\bx}^{\frac{3}{2}}\dt\\
&\lesssim \int_{I_*}\lVert\partial_t\eta\rVert_{W^{1/4, 2}_\by}^2\lVert\nabla\bu\rVert_{L^2_\bx}^{\frac{1}{2}}\lVert\nabla\bu\rVert_{W^{1,2}_\bx}^{\frac{3}{2}}\dt\\
&\lesssim \int_{I_*}\lVert\partial_t\eta\rVert_{W^{1,2}_\by}^{\frac{1}{2}}\lVert\nabla\bu\rVert_{L^2_\bx}^{\frac{1}{2}}\lVert\nabla\bu\rVert_{W^{1,2}_\bx}^{\frac{3}{2}}\dt\\
&\leq \kappa \int_{I_*}\lVert\nabla\bu\rVert_{W^{1,2}_\bx}^2\dt+C(\kappa)\int_{I_*}\lVert\partial_t\eta\rVert_{W^{1,2}_\by}^2\lVert\nabla\bu\rVert_{L^2_\bx}^2\dt.
\end{aligned}
\end{equation*}
Here we used the 2D interpolation in \eqref{interalways} again and 
$$\lVert\mathscr F_\eta(\partial_t\eta\bfn)\rVert_{W^{3/4,2}_\bx}\lesssim \lVert\partial_t\eta\rVert_{W^{1/4,2}_\by}, $$ 
and the inequalities:
\begin{equation}\label{reatinter}
\begin{aligned}
\lVert f\rVert_{L^4(\Omega_\eta)}&\lesssim \lVert f\rVert_{W^{3/4, 2}(\Omega_\eta)},\\
\lVert f\rVert_{L^4(\Omega_\eta)}&\lesssim \lVert f\rVert_{L^2(\Omega_\eta)}^{\frac{1}{4}}\lVert f\rVert_{W^{1,2}(\Omega_\eta)}^{\frac{3}{4}}.
\end{aligned}
\end{equation}
Thus we have the estimate for $\mathrm{III}$:
\begin{equation*}\label{IIestimate}
\mathrm{III}\leq \kappa\int_{I_\ast}\left(\|\partial_t\bu+\bu\cdot\nabla\bu\|_{L^2_\bx}^2+\lVert\nabla\bu\rVert_{W^{1,2}_\bx}^2\right)\dt+C(\kappa) \int_{I_*}\lVert\partial_t\eta\rVert_{W^{1,2}_\by}^2\lVert\nabla\bu\rVert_{L^2_\bx}^2\dt.
\end{equation*}

Now we consider estimating the integral $\mathrm{IV}$. For this we have 
\begin{align}
&\int_{I_*}\int_{\Omega_\eta}\nabla\bu:\mathscr F_\eta(\partial_t\eta\bfn)^\intercal\nabla^2\bu\dx\dt\nonumber\\
&\leq \int_{I_*}\lVert\nabla^2\bu\rVert_{L^2_\bx}\lVert\nabla\bu\rVert_{L^4_\bx}\lVert\mathscr F_\eta(\partial_t\eta\bfn)\rVert_{L^4_\bx}\dt\nonumber\\
&\lesssim \int_{I_*}\lVert\nabla^2\bu\rVert_{L^2_\bx}\lVert\nabla\bu\rVert_{L^2_\bx}^{\frac{1}{4}}\lVert\nabla\bu\rVert_{W^{1,2}_\bx}^{\frac{3}{4}}\lVert\mathscr F_\eta(\partial_t\eta\bfn)\rVert_{W^{3/4,2}_\bx}\dt \nonumber\\
&\lesssim \int_{I_*}\lVert\bu\rVert_{W^{2,2}_\bx}^{\frac{7}{4}}\lVert\nabla\bu\rVert_{L^2_\bx}^{\frac{1}{4}}\lVert\partial_t\eta\rVert_{W^{1/4, 2}_\by}\dt\nonumber\\
&\lesssim \int_{I_*}\lVert\bu\rVert_{W^{2,2}_\bx}^{\frac{7}{4}}\lVert\nabla\bu\rVert_{L^2_\bx}^{\frac{1}{4}}\lVert\partial_t\eta\rVert_{W^{1, 2}_\by}^{\frac{1}{4}}\dt\nonumber\\
&\leq \kappa\int_{I_*}\lVert\bu\rVert_{W^{2,2}_\bx}^2\dt+C(\kappa)\int_{I_*}\lVert\partial_t\eta\rVert_{W^{1,2}_\bx}^2\lVert\nabla\bu\rVert_{L^2_\bx}^2\dt,\label{IV1}
\end{align}
where we used the energy estimate \eqref{eq:apriorieta} and the interpolation inequality \eqref{interalways} and \eqref{reatinter} again. Then for the second integral in $\mathrm{IV}$, we also have
\begin{equation}\label{IV2}
\begin{aligned}
&\int_{I_*}\int_{\Omega_\eta}\nabla\bu:\nabla\mathscr F_\eta(\partial_t\eta\bfn)\nabla\bu^\intercal\dx\dt\\
&\lesssim \int_{I_*}\lVert\nabla\bu\rVert_{L^4_\bx}\lVert\nabla\bu^\intercal\rVert_{L^4_\bx}\lVert\nabla\mathscr F_\eta(\partial_t\eta\bfn)\rVert_{L^2_\bx}\dt\\
&\lesssim \lVert\nabla\bu\rVert_{L^2_\bx}^{\frac{1}{4}}\lVert\nabla\bu\rVert_{W^{1,2}_\bx}^{\frac{3}{4}}\lVert\nabla\bu^\intercal\rVert_{L^2_\bx}^{\frac{1}{4}}\lVert\nabla\bu^\intercal\rVert_{W^{1,2}_\bx}^{\frac{3}{4}}\lVert\mathscr F_\eta(\partial_t\eta\bfn)\rVert_{W^{1,2}_\bx}\dt\\
&\lesssim \int_{I_*}\lVert\bu\rVert_{W^{2,2}_\bx}^{\frac{3}{2}}\lVert\bu\rVert_{W^{1,2}_\bx}^{\frac{1}{2}}\lVert\partial_t\eta\rVert_{W^{1/2,2}_\by}\dt\\
&\lesssim\int_{I_*} \lVert\bu\rVert_{W^{2,2}_\bx}^{\frac{3}{2}}\lVert\bu\rVert_{W^{1,2}_\bx}^{\frac{1}{2}}\lVert\partial_t\eta\rVert_{W^{1,2}_\by}^{\frac{1}{2}}\dt\\
&\leq \kappa\int_{I_*}\lVert \bu\rVert_{W^{2,2}_\bx}^2\dt+C(\kappa)\int_{I_*}\int_{I_*}\lVert\partial_t\eta\rVert_{W^{1,2}_\by}^2\lVert\bu\rVert_{W^{1,2}_\bx}^2\dt,
\end{aligned}
\end{equation}
where we used the interpolation:
$$\lVert \partial_t\eta\rVert_{W^{1/2, 2}_\by}\lesssim \lVert\partial_t\eta\rVert_{L^2_\by}^{\frac{1}{2}}\lVert\partial_t\eta\rVert_{W^{1,2}_\by}^{\frac{1}{2}}. $$
Putting the estimate \eqref{IV1} and \eqref{IV2} together, we arrive at
\begin{equation*}\label{IV}
\mathrm{IV}\leq \kappa\int_{I_*}\lVert \bu\rVert_{W^{2,2}_\bx}^2\dt+C(\kappa)\int_{I_*}\lVert\partial_t\eta\rVert_{W^{1,2}_\by}^2\lVert\bu\rVert_{W^{1,2}_\bx}^2\dt.
\end{equation*}

To estimate $\mathrm{V}$, we note that it can be written as
\begin{align*}
	\mathrm{V}=-\int_{I_\ast}\int_{\Omega_\eta}\nabla\pi\cdot\mathscr F_\eta(\partial_t\eta\bfn)\nabla\bu\dx\dt+\int_{I_\ast}\int_{\partial\Omega_\eta}\pi\,\mathscr F_\eta(\partial_t\eta\bfn)\nabla\bu\,\bfn_\eta\circ\bfvarphi_\eta^{-1}\,\dd\mathcal H^2\dt.
\end{align*}
And we have the estimates for the two integrals on the right-hand side above one by one. Firstly, we derive that
\begin{equation}\label{fsimilar}
\begin{aligned}
&\int_{I_\ast}\int_{\Omega_\eta}\nabla\pi\cdot\mathscr F_\eta(\partial_t\eta\bfn)\nabla\bu\dx\dt\\
&\leq \int_{I_*}\lVert\nabla\pi\rVert_{L^2_\bx}\lVert\mathscr F_\eta(\partial_t\eta\bfn)\rVert_{L^4_\bx}\lVert\nabla\bu\rVert_{L^4_\bx}\dt\\
&\lesssim \int_{I_*}\lVert\nabla\pi\rVert_{L^2_\bx}\lVert\mathscr F_\eta(\partial_t\eta\bfn)\rVert_{W^{3/4,2}_\bx}\lVert\nabla\bu\rVert_{L^2_\bx}^{\frac{1}{4}}\lVert\nabla\bu\rVert_{W^{1,2}_\bx}^{\frac{3}{4}}\dt\\
&\lesssim \int_{I_*}\lVert \nabla\pi\rVert_{L^2_\bx}\lVert\partial_t\eta\rVert_{W^{1/4,2}_\by}\lVert\nabla\bu\rVert_{L^2_\bx}^{\frac{1}{4}}\lVert\nabla\bu\rVert_{W^{1,2}_\bx}^{\frac{3}{4}}\dt\\
&\lesssim \int_{I_*}\lVert\nabla\pi\rVert_{L^2_\bx}\lVert\partial_t\eta\rVert_{W^{1,2}_\by}^{\frac{1}{4}}\lVert\nabla\bu\rVert_{L^2_\bx}^{\frac{1}{4}}\lVert\nabla\bu\rVert_{W^{1,2}_\bx}^{\frac{3}{4}}\dt\\
&\lesssim \kappa\int_{I_*}\left(\lVert\nabla\pi\rVert_{L^2_\bx}^2+\lVert\nabla\bu\rVert_{W^{1,2}_\bx}^2\right)\dt+C(\kappa)\int_{I_*}\lVert\partial_t\eta\rVert_{W^{1,2}_\by}^2\lVert\nabla\bu\rVert_{L^2_\bx}^2\dt.
\end{aligned}
\end{equation}
Then, by using similar technique, we also have 
\begin{equation*}
\begin{aligned}
&\int_{I_\ast}\int_{\partial\Omega_\eta}\pi\,\mathscr F_\eta(\partial_t\eta\bfn)\nabla\bu\,\bfn_\eta\circ\bfvarphi_\eta^{-1}\,\dd\mathcal H^2\dt\\
&\leq \int_{I_*}\lVert\pi\rVert_{L^4(\partial\Omega\eta)}\lVert\mathscr F_\eta(\partial_t\eta\bfn)\rVert_{L^{8/3}(\partial\Omega_\eta)}\lVert\nabla\bu\rVert_{L^{8/3}(\partial\Omega_\eta)}\dt\\
&\lesssim \int_{I_*}\lVert\pi\rVert_{W^{1/2,2}(\partial\Omega_\eta)}\lVert\mathscr  F_{\eta}(\partial_t\eta\bfn)\rVert_{W^{1/4,2}(\partial\Omega_\eta)}\lVert\nabla\bu\rVert_{W^{1/4, 2}(\partial\Omega_\eta)}\dt\\
&\lesssim \int_{I_*}\lVert\pi\rVert_{W^{1,2}_\bx}\lVert\partial_t\eta\rVert_{W^{1/4,2}_\by}\lVert\nabla\bu\rVert_{W^{3/4,2}_\bx}\dt\\
&\lesssim \int_{I_*}\lVert\pi\rVert_{W^{1,2}_\bx}\lVert\partial_t\eta\rVert_{W^{1,2}_\by}^{\frac{1}{4}}\lVert\nabla\bu\rVert_{L^2_\bx}^{\frac{1}{4}}\lVert\nabla\bu\rVert_{W^{1,2}_\bx}^{\frac{3}{4}}\dt\\
&\leq \kappa\int_{I_*}\left(\lVert\pi\rVert_{W^{1,2}_\bx}^2+\lVert\bu\rVert_{W^{2,2}_\bx}^2\right)\dt+C(\kappa)\int_{I_*}\lVert\partial_t\eta\rVert_{W^{1,2}_\by}^2\lVert\nabla\bu\rVert_{L^2_\bx}^2\dt.
\end{aligned}
\end{equation*}
Therefore, we obtain the estimate for $\mathrm{V}$ as follows:
\begin{equation*}\label{Vesti}
\begin{aligned}
\mathrm{V}\leq \kappa\int_{I_*}\left(\lVert\pi\rVert_{W^{1,2}_\bx}^2+\lVert\bu\rVert_{W^{2,2}_\bx}^2\right)\dt+C(\kappa)\int_{I_*}\lVert\partial_t\eta\rVert_{W^{1,2}_\by}^2\lVert\nabla\bu\rVert_{L^2_\bx}^2\dt.
\end{aligned}
\end{equation*}

Notice that the sixth integral on the right-hand side of \eqref{eq:0302}, i.e. $\mathrm{VI}$, can be treated similarly as \eqref{fsimilar}. 
Observe from the estimate of $\mathrm{V}$ above, we must estimate here also the $L^2$-norm of the pressure, for which we use
\eqref{eq:pressure} (noticing that $\int_{\omega}\bfn\cdot\bfn_\eta|\partial_y\bfvarphi_\eta|\dy$ is strictly positive by our assumption of non-degeneracy). We have
\begin{equation*}
	\begin{aligned}
\int_{I_\ast}\|\pi\|^2_{W^{1,2}_\bx}\dt&\lesssim \int_{I_\ast}\|\nabla\pi\|^2_{L^{2}_\bx}\dt+\int_{I_\ast}c_\pi^2\dt\\
&\lesssim \int_{I_\ast}\|\nabla\pi\|^2_{L^{2}_\bx}\dt+\int_{I_\ast}\int_\omega|\partial_t^2\eta|^2\dy\dt+\int_{I_\ast}\int_\omega|g|^2\dy\dt\\
&\quad+\int_{I_\ast}\|\pi_0\|_{L^{2}(\partial\Omega_\eta)}^2\dt+\int_{I_\ast}\|\nabla\bu\|_{L^{2}(\partial\Omega_\eta)}^2\dt.
\end{aligned}
\end{equation*}
 Moreover, for any $\epsilon\in(0,1/2)$ the last term above can be estimated as                                                   
\begin{align*}
\int_{I_\ast}\|\nabla\bu\|_{L^{2}(\partial\Omega_\eta)}^2\dt
&\lesssim \int_{I_\ast}\|\nabla\bu\|_{W^{1/2+\epsilon,2}(\Omega_\eta)}^2\dt\\
&\lesssim\int_{I_\ast}
\|\nabla\bu\|_{L^{2}(\Omega_\eta)}^{1-2\epsilon}
\| \bu\|_{W^{2,2}(\Omega_\eta)}^{1+2\epsilon}\dt\\
&\leq \kappa\int_{I_\ast}\big(\|\partial_t\bu+\bu\cdot\nabla\bu\|_{L^2_\bx}^2+\|\bff\|^2_{L^2_\bx}+\|\partial_t\eta\|^2_{W^{3/2,2}_\by}\big)\dt
+c(\kappa)\int_{I_\ast}\|\nabla\bu\|_{L^{2}_\bx}^{2}\dt,
\end{align*}
whereas by using Poincar\'e inequality,
\begin{align*}
\int_{I_\ast}\|\pi_0\|_{L^{2}(\partial\Omega_\eta)}^2\dt
\lesssim\int_{I_\ast}\|\nabla\pi_0\|_{L^{2}_\bx}^{2}\dt+\int_{I_\ast}\|\pi_0\|_{L^{2}_\bx}^{2}\dt\lesssim \int_{I_\ast}\|\nabla\pi_0\|_{L^{2}_\bx}^{2}\dt=\int_{I_\ast}\|\nabla\pi\|_{L^{2}_\bx}^{2}\dt,
\end{align*}
where we used that $(\pi_0)_{\Omega_{\eta}}=0$ by definition.
At this stage, the integrals on the
pressure in the above can now be controlled by means of \eqref{eq:regstokes}.

Combining all the above estimates, choosing $\kappa$ small enough and using \eqref{eq:apriorieta} once more we conclude that
\begin{align}\label{eq:reg2}
\begin{aligned}
&\sup_{I_\ast}\int_{\Omega_\eta}|\nabla\bu|^2\dx+\int_{I_\ast}\int_{\Omega_\eta}|\partial_t\bu+\bu\cdot\nabla\bu|^2\dx\dt
+\int_{I_\ast}\int_\omega|\partial_t^2\eta|^2\dx\dt
+
\sup_{I_\ast}\int_\omega|\partial_t\naby\eta|^2\dy 
\\
&\lesssim 
\int_{I_\ast}\|\bff\|_{L^2_\bx}^2\dt+\|\nabla\bu_0\|_{L^2_\bx}^2
+
\int_{I_\ast}\|g\|_{L^2_\by}^2\dt+\|\naby\eta_*\|^2_{L^2_\by}+\int_{I_\ast}\|\nabla\bu\|_{L^{2}_\bx}^{2}\dt
\\&
\quad+\int_{I_\ast}
\|\bu\|_{L^s_\bx}^{\frac{2s}{s-3}}\|\nabx\bu\|^{2}_{L^{2}_\bx}\dt 
+
\int_{I_\ast}\|\partial_t\eta\|_{W^{1,2}_\by}^2\|\nabla\bu\|_{L^2_\bx}^2\dt+\int_{I_*}\lVert\partial_t\eta\rVert_{W^{1,2}_\by}^2\dt
\\
&\quad+
\sup_{I_\ast}\|\naby\Dely\eta\|_{L^2_\by}^2
+\int_{I_\ast}\|\partial_t\Dely\eta\|_{L^2_\by}^2\dt.
\end{aligned}
\end{align}
In the above we used the interpolation for the structure norm $\lVert\partial_t\eta\rVert_{W^{3/2}_\by}^2$:
\begin{equation}\label{interpoeta}
 \lVert\partial_t\eta\rVert_{W^{3/2}_\by}^2\leq \lVert\partial_t\eta\rVert_{W^{1,2}_\by}\lVert\partial_t\eta\rVert_{W^{2,2}_\by}\leq \kappa\lVert\partial_t\eta\rVert_{W^{2,2}_\by}^2+C(\kappa)\lVert\partial_t\eta\rVert_{W^{1,2}_\by}^2.
\end{equation}

Testing the structure equation by $\partial_t\Dely\eta$ yields\footnote{This test can be rigorously performed by mollifying the structure equation and multiplying it with the mollified test-function.}
\begin{align}
\label{eq:secondtest}
\begin{aligned}
&\frac{1}{2}\sup_{I_\ast}\int_\omega|\partial_t\naby\eta|^2\dy+\int_{I_\ast}\int_\omega|\partial_t\Dely\eta|^2\dy\dt+\frac{1}{2}\sup_{I_\ast}\int_\omega|\naby\Dely\eta|^2\dy\\&=\frac{1}{2}\int_\omega|\naby\eta_*|^2\dy+\frac{1}{2}\int_\omega|\naby\Dely\eta_0|^2\dy-\int_{I_\ast}\int_\omega(g+\bfF)\,\partial_t\Dely\eta\dy\dt,
\end{aligned}
\end{align}
where $\bfF$ has been introduced in \eqref{Fdef}.

Using a similar argument as in \eqref{again} to control $\bfF$ by $\bftau$ and as in $\mathrm{I}-\mathrm{III}$ and $\mathrm{V}$ to estimate $\bftau$, we have
\begin{align*}
&\int_{I_\ast}\int_\omega\bfF\cdot\partial_t\Dely\eta\dy\dt\\
&\leq\int_{I_\ast}\|\bfF\|_{W^{1/2,2}(\omega)}\|\partial_t\Dely\eta\|_{W^{-1/2,2}(\omega)}\dt\\
&\lesssim\int_{I_\ast}\|\bftau\|_{W^{1/2,2}(\partial\Omega_\eta)}\|\partial_t\eta\|_{W^{3/2,2}(\omega)}\dt
\\
&\lesssim\int_{I_\ast}\big(\|\partial_t\bu+\bu\cdot\nabla\bu\|_{L^{2}_\bx}+\|\bff\|_{L^2_\bx}+\|\partial_t\eta\|_{W^{3/2,2}_\by}+\|\partial_t^2\eta\|_{L^2_\by}+\|g\|_{L^2_\by}\big)\|\partial_t\eta\|_{W^{3/2,2}_\by}\dt
\\
&\leq\kappa\int_{I_\ast}\big(\|\partial_t\bu+\bu\cdot\nabla\bu\|_{L^{2}_\bx}^2+\|\bff\|_{L^2_\bx}^2+\|\partial_t^2\eta\|_{L^2_\by}^2+\|g\|_{L^2_\by}^2\big)\dt+c(\kappa)\int_{I_\ast}\|\partial_t\eta\|_{W^{3/2,2}_\by}^2\dt
\\
&\leq\kappa\int_{I_\ast}\big(\|\partial_t\bu+\bu\cdot\nabla\bu\|_{L^{2}_\bx}^2+\|\partial_t^2\eta\|_{L^2_\by}^2+\|\partial_t\Dely\eta\|_{L^2_\by}^2+\|\bff\|_{L^2_\bx}^2\big)\dt
\\
&\quad+c(\kappa)\int_{I_\ast}\|\partial_t\eta\|_{W^{1,2}_\by}^2\dt+c(\kappa)\int_{I_\ast}\|g\|_{L^{2}_\by}^2\dt,
\end{align*}
where we used the interpolation \eqref{interpoeta} and \eqref{equivNorm} in the last step.
Hence we derive from \eqref{eq:apriorieta} and Gr\"onwall inequality that
\begin{align}\label{eq:reg3}
\begin{aligned}
&\sup_{I_\ast}\int_\omega|\partial_t\naby\eta|^2\dy+\int_{I_\ast}\int_\omega|\partial_t\Dely\eta|^2\dy\dt+\sup_{I_\ast}\int_\omega|\naby\Dely\eta|^2\dy\\&\leq\kappa\int_{I_\ast}\big(\|\partial_t\bu+\bu\cdot\nabla\bu\|_{L^{2}_\bx}^2 +\|\partial_t^2\eta\|_{L^2_\by}^2\big)\dt+ c(\kappa)\tilde C_0,
\end{aligned}
\end{align}
where 
\begin{align*}
\tilde C_0=C_0+\int_\omega|\naby\eta_*|^2\dy+\int_\omega|\naby\Dely\eta_0|^2\dy.
\end{align*}
Combining \eqref{eq:reg2} and \eqref{eq:reg3}, we arrive at 
\begin{align*}
&\sup_{I_\ast}\int_{\Omega_\eta}|\nabla\bu|^2\dx+\int_{I_\ast}\int_{\Omega_\eta}|\partial_t\bu+\bu\cdot\nabla\bu|^2\dx\dt
+\int_{I_\ast}\int_\omega|\partial_t^2\eta|^2\dy\dt\\
&\quad+\sup_{I_\ast}\int_\omega|\partial_t\naby\eta|^2\dy+\int_{I_\ast}\int_\omega|\partial_t\Dely\eta|^2\dy\dt+\sup_{I_\ast}\int_\omega|\naby\Dely\eta|^2\dy
\\
&\lesssim \int_{I_\ast}
\|\bu\|_{L^s_\bx}^{\frac{2s}{s-3}}\|\nabx\bu\|^{2}_{L^{2}_\bx}\dt+
\int_{I_\ast}\|\partial_t\eta\|_{W^{1,2}_\by}^2\|\nabla\bu\|_{L^2_\bx}^2\dt+C.
\end{align*}
Note that the condition \eqref{eq:regu} and \eqref{eq:apriorieta} imply that $\int_{I_\ast}\big(\|\bu\|_{L^s_\bx}^{\frac{2s}{s-3}}+\|\partial_t\eta\|_{W^{1,2}_\by}^2\big)\dt\leq\,c$ with a constant $c$ depending on $C_1$. Therefore, we obtain from Gr\"onwall's lemma that
\begin{align}\label{eq:reg4}
\begin{aligned}
\sup_{I_\ast}\int_{\Omega_\eta}&|\nabla\bu|^2\dx+\int_{I_\ast}\int_{\Omega_\eta}|\partial_t\bu+\bu\cdot\nabla\bu|^2\dx\dt
+\int_{I_\ast}\int_\omega|\partial_t^2\eta|^2\dy\dt\leq\,c,\\
\sup_{I_\ast}\int_\omega&\big(|\partial_t\naby\eta|^2
+
|\naby\Dely\eta|^2
\big)\dy
+
\int_{I_\ast}\int_\omega|\partial_t\Dely\eta|^2\dy\dt \leq c.
\end{aligned}
\end{align}
We can now use the momentum equation and \eqref{eq:regstokes} again to obtain (recall \eqref{again})
\begin{align}\label{eq:reg5}
\begin{aligned}
&\int_{I_\ast}\int_{\Omega_\eta}|\nabla^2\bu|^2\dx\dt+\int_{I_\ast}\int_{\Omega_\eta}|\nabla\pi|^2\dx\dt\\&\leq\,c\int_{I_\ast}\int_{\Omega_\eta}|\partial_t\bu+\bu\cdot\nabla\bu|^2\dx\dt+\int_{I_*}\|\bff\|_{L^2_\bx}^2\dt
+\int_{I_\ast}\|\partial_t\eta\|_{W^{3/2,2}_\by}^2\dt\leq\,c.
\end{aligned}
\end{align}
At this point, we notice that the only term required to obtain \eqref{est:reg} is a uniform-in-time bound for $\int_{I_\ast}\int_{\Omega_\eta}|\partial_t\bu|^2\dx\dt$. Since by \eqref{eq:reg5}, all the terms on the right-hand side of the momentum equation \eqref{2} are squared integrable in space-time, our desired estimate follows once we show that the convective term $\bu\cdot\nabx\bu$ is also squared integrable in space-time. Note that a bound for the sum $\partial_t\bv+\bu\cdot\nabx\bu$ as given in \eqref{eq:reg5} does not directly yield a bound for $\bu\cdot\nabx\bu$. Nevertheless, combining \eqref{eq:reg4}, \eqref{eq:reg5} and Sobolev embedding, we thereby obtain that
\begin{align*}
\int_{I_\ast}&\int_{\Omega_\eta}|\bu\cdot\nabla\bu|^2\dx\dt
\lesssim
\int_{I_\ast}\Vert \bu\Vert_{L^4_\bx}^2\Vert\nabx \bu\Vert_{L^4_\bx}^2\dt
\lesssim
\sup_{I_\ast}\Vert \bu\Vert_{W^{1,2}_\bx}^2\int_{I_\ast}\Vert\nabx \bu\Vert_{W^{1,2}_\bx}^2\leq c,
\end{align*}
which completes the proof.
\end{proof}

\begin{remark}
{\rm 
We remark here that the conditions we proposed in Theorem \ref{prop2} is the minimal assumption for the conditional strong solution for the fluid-structure system \eqref{1}--\eqref{interfaceCond}. The Serrin condition for the velocity of the fluid \eqref{eq:regu} is crucial in the estimate of the convective term and the Lipschitz condition for the structure \eqref{eq:regeta} plays an important role in the steady Stokes estimate related to the pressure estimate.
}
\end{remark}

\section{Weak-strong uniqueness}
\label{sec:weakStrong}
In this section, we are interested in the weak-strong uniqueness of the solutions for 
the fluid-structure interaction system \eqref{1}--\eqref{interfaceCond}. 
We aim to compare two solutions
$(\bu_1,\eta_1)$ and $(\bu_2,\eta_2)$, where $(\bu_1,\eta_1)$ is a weak solution satisfying \eqref{eq:regeta}  and $(\bu_2,\eta_2)$ is a strong solution, i.e. satisfies \eqref{est:reg}. Since the fluid domain depends on the deformation 
of the shell, we have to transfer the {\bf strong} solution by means of a change of 
variables to the {\bf weak} domain. We transform $\bu_2$ and $\pi_2$ (note that we have a pressure for the strong solution but not for the weak one) to the domain of the weak solution (that is $\Omega_{\eta_1}$) by setting
\begin{align}
\label{mapTwoToOne}
\underline{\bu}_2:=\bu_2\circ\bfPsi_{\eta_2-\eta_1},\quad \underline\pi_2:=\pi_2\circ\bfPsi_{\eta_2-\eta_1},
\quad \underline{\mathbf f}_2:=\mathbf f_2\circ\bfPsi_{\eta_2-\eta_1},
\end{align}
where the Hanzawa transform $\bm{\Psi}_{\eta_2-\eta_1} : \Omega_{\eta_1} \rightarrow\Omega_{\eta_2}$ is defined similarly as in \eqref{map}. With this information, we are now in the position to state our main result in this section.
\begin{theorem}\label{thm:weakstrong}
Let $(\bu_1,\eta_1)$ be a weak solution of \eqref{1}--\eqref{interfaceCond} with data $(\bff_1, g_1, \eta_{0,1}, \eta_{*,1}, \bu_{0,1})$ in the sense of Definition \ref{def:weakSolution} and let $(\bu_2,\eta_2)$ be a strong solution of \eqref{1}--\eqref{interfaceCond} with data $(\bff_2, g_2, \eta_{0,2}, \eta_{*,2}, \bu_{0,2})$ in the sense of Definition \ref{def:strongSolution}.
Suppose further that
\begin{equation*}\label{eq:regeta'}
\eta_1\in L^\infty(I;C^{0,1}(\omega))
\end{equation*}
and define 
$\underline{\bu}_2$ and $\underline{\mathbf f}_2$ in accordance with \eqref{mapTwoToOne}. Assume that
$\mathbf f_1,\underline{\mathbf f}_2\in L^2(I;L^2(\Omega_{\eta_1}))$ and $g_1,g_2\in L^2(I;L^2(\omega))$.
Then we have
\begin{equation}\label{final}
\begin{aligned}
&\sup_{t\in I}\int_{\Omega_{\eta_1(t)}}|\bu_1(t)-\underline{\bu}_2(t)|^2\dx+\sup_{t\in I}\int_\omega\big(|\partial_t(\eta_1-\eta_2)(t)|^2
+
|\Dely(\eta_1-\eta_2)(t)|^2\big)\dy
\\
&\qquad\qquad\qquad\qquad+\int_I
\int_{\Omega_{\eta_1(\sigma)}}|\nabla(\bu_1-\underline{\bu}_2)|^2\dx\dt 
+
\int_I\int_\omega|\partial_t\nabla_{\by}(\eta_1-\eta_2)|^2\dy\dt
\\
&\lesssim \int_{\Omega_{\eta_{0,1}}}|\bu_1(0)-\underline{\bu}_2(0)|^2\dx+\int_\omega|\partial_t(\eta_1-\eta_2)(0)|^2\dy+\int_\omega|\Dely(\eta_1-\eta_2)(0)|^2\dy
\\
&\qquad\qquad\qquad\qquad +\int_I\int_{\Omega_{\eta_1}}| \mathbf f_1-\underline {\mathbf f}_2|^2\dx\dt+\int_I\int_\omega| g_1-g_2|^2\dy\dt.
\end{aligned}
\end{equation}
\end{theorem}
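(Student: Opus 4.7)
The plan is to establish \eqref{final} by transferring the strong solution to the domain of the weak one, correcting its non-solenoidality via a time-uniform Bogovskij operator, then performing a relative energy argument. Let $\bfPsi := \bfPsi_{\eta_2-\eta_1} : \Omega_{\eta_1} \to \Omega_{\eta_2}$ be the Hanzawa transform, and write $\mathbf{A}_{2-1}$ and $J_{2-1}$ for the cofactor-type matrix and Jacobian determinant appearing in the change of variables (as in Section \ref{Sectionlinear}). A direct computation shows that $\underline{\bu}_2$ solves a transformed Navier--Stokes system on $\Omega_{\eta_1}$,
\begin{equation*}
J_{2-1}\partial_t \underline{\bu}_2 - \Delta \underline{\bu}_2 + \nabla \underline{\pi}_2 + \text{(transformation remainders)} = J_{2-1}\underline{\mathbf f}_2 + \mathbf R(\eta_1,\eta_2,\bu_2,\pi_2),
\end{equation*}
with $\mathbf A_{2-1}:\nabla\underline{\bu}_2 = 0$, where $\mathbf R$ collects commutator terms supported in the boundary layer $S_L$ and is linear-order small in $\eta_2-\eta_1$. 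At the boundary, $\underline{\bu}_2 \circ \bfvarphi_{\eta_1} = \partial_t\eta_2 \,\bfn$.

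The first main step is the construction of a solenoidal, boundary-compatible test function from $\underline{\bu}_2$. Since $\diver \underline{\bu}_2 \neq 0$ in general, I apply the universal Bogovskij operator of Theorem \ref{thm:ndBog} to set
\begin{equation*}
\bfphi := \underline{\bu}_2 - \Bog\bigl(\diver \underline{\bu}_2 - b \textstyle\int \diver \underline{\bu}_2 \dx\bigr),
\end{equation*}
so that $\bfphi$ is divergence-free on $\Omega_{\eta_1}$ with $\bfphi \circ \bfvarphi_{\eta_1}=\partial_t\eta_2\,\bfn$. The hypothesis $\eta_1\in L^\infty(I;C^{0,1}(\omega))$ is used exactly here: it bounds $\mathrm{Lip}(\partial \Omega_{\eta_1(t)})$ uniformly in $t$, which is what makes the operator norm of $\Bog$ and the time-derivative rule of Remark \ref{time-bog} time-uniform. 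Moreover, since $\diver \underline{\bu}_2 = (\mathbf I - \mathbf A_{2-1}):\nabla \underline{\bu}_2$ vanishes when $\eta_1=\eta_2$, we have the bound $\|\bfphi - \underline{\bu}_2\|_{W^{1,p}} \lesssim \|\eta_1-\eta_2\|_{W^{1,\infty}}\|\bu_2\|_{W^{1,p}}$.

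The second main step is the relative energy argument. I test the weak formulation of $(\bu_1,\eta_1)$ in Definition \ref{def:weakSolution}(c) against the pair $(\bfphi, \partial_t\eta_2)$ (which satisfies all admissibility conditions by the previous step), and I test the strong momentum equation for $\underline{\bu}_2$ against $\bu_1 - \bfphi$ (admissible thanks to Lemma \ref{lem:smooth} and the regularity afforded by Theorem \ref{prop2}) together with the strong structure equation for $\eta_2$ against $\partial_t(\eta_2-\eta_1)$. Summing with the energy inequality for the weak solution and with the corresponding energy identity for the strong solution produces, after cancellations, a differential inequality for
\begin{equation*}
\mathcal{R}(t):=\tfrac12 \int_{\Omega_{\eta_1(t)}}\!|\bu_1-\underline{\bu}_2|^2\dx + \tfrac12\int_\omega\!\bigl(|\partial_t(\eta_1-\eta_2)|^2+|\Dely(\eta_1-\eta_2)|^2\bigr)\dy,
\end{equation*}
with dissipation $\int|\nabla(\bu_1-\underline\bu_2)|^2\dx + \int|\partial_t\naby(\eta_1-\eta_2)|^2\dy$ on the left. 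On the right, the convective cross term $\int (\underline{\bu}_2\cdot\nabla)(\bu_1-\underline{\bu}_2)\cdot(\bu_1-\underline{\bu}_2)\dx$ is controlled by $\|\nabla\underline{\bu}_2\|_{L^3} \|\bu_1-\underline{\bu}_2\|_{L^{12/5}}^2$, interpolated and absorbed using the dissipation plus an $L^1_t$ factor (finite by Theorem \ref{prop2}); the data terms produce the $\|\mathbf f_1-\underline{\mathbf f}_2\|^2$ and $\|g_1-g_2\|^2$ contributions; and the Bogovskij-correction and transformation remainders yield factors of $\|\eta_1-\eta_2\|_{W^{1,\infty}_\by} \leq C\|\Dely(\eta_1-\eta_2)\|_{L^2_\by}^{1/2}\|\nabla^3(\eta_1-\eta_2)\|_{L^2_\by}^{1/2}$ (the second factor is controlled uniformly since $\eta_2\in L^\infty_tW^{3,2}_\by$) multiplied by norms of the strong solution, hence absorbable into $\mathcal{R}$ up to $L^1_t$-integrable prefactors. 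Gr\"onwall's inequality then closes the estimate.

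The main obstacle, by some distance, is the first step: producing the divergence-corrected test function with time-uniform estimates under only bi-Lipschitz spatial regularity of $\partial\Omega_{\eta_1}$, and simultaneously tracking how the correction modifies the boundary trace coupling so that it is still compatible with the kinematic constraint. This is the reason the uniqueness class must be enlarged by the $C^{0,1}$-assumption on $\eta_1$ (cf.\ the discussion after Theorem \ref{thm:mainsimple}), and it is the technical reason for introducing Theorem \ref{thm:ndBog} in the first place. Secondary bookkeeping difficulty lies in carefully grouping the transformation remainders into terms that are either quadratic in $\nabla(\bu_1-\underline{\bu}_2)$ (hence absorbed), or of the form $K(t)\mathcal{R}(t)$ with $K\in L^1(I)$.
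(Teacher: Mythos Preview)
Your overall architecture matches the paper's: transport the strong solution onto $\Omega_{\eta_1}$, correct the lost solenoidality with the universal Bogovskij operator of Theorem~\ref{thm:ndBog} (this is indeed where the assumption $\eta_1\in L^\infty(I;C^{0,1}(\omega))$ enters), combine the weak energy inequality with the strong equations tested against the corrected difference, and close by Gr\"onwall. The paper carries this out by writing the result as a list of remainders $R_1,\dots,R_{22}$ and estimating each one; your sketch captures the same mechanism.

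There is, however, a genuine gap in how you propose to bound the transformation and Bogovskij remainders. You estimate these by a factor $\|\eta_1-\eta_2\|_{W^{1,\infty}_\by}$ and then interpolate
\[
\|\eta_1-\eta_2\|_{W^{1,\infty}_\by}\lesssim \|\Dely(\eta_1-\eta_2)\|_{L^2_\by}^{1/2}\|\nabla^3(\eta_1-\eta_2)\|_{L^2_\by}^{1/2},
\]
claiming the second factor is controlled since $\eta_2\in L^\infty_t W^{3,2}_\by$. But $\eta_1$ is only a \emph{weak} solution, hence only in $L^\infty_t W^{2,2}_\by$; the quantity $\|\nabla^3\eta_1\|_{L^2_\by}$ is not available and the interpolation cannot close. (This is exactly the borderline failure $W^{2,2}(\omega)\not\hookrightarrow W^{1,\infty}(\omega)$ in two dimensions that the introduction flags.) The same issue contaminates your bound $\|\bfphi-\underline\bu_2\|_{W^{1,p}}\lesssim\|\eta_1-\eta_2\|_{W^{1,\infty}_\by}\|\bu_2\|_{W^{1,p}}$.

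The fix, which the paper implements term by term, is to avoid $W^{1,\infty}_\by$ of the difference altogether. Since $\omega$ is two-dimensional, $W^{2,2}(\omega)\hookrightarrow W^{1,p}(\omega)$ for every finite $p$; thus $\|\naby(\eta_1-\eta_2)\|_{L^p_\by}\lesssim\|\Dely(\eta_1-\eta_2)\|_{L^2_\by}$, which \emph{is} part of $\mathcal R(t)$. One then compensates by placing the strong-solution factor in a higher-integrability space, e.g.\ $\|\nabla\underline\bu_2\|_{L^4_\bx}\lesssim\|\underline\bu_2\|_{W^{2,2}_\bx}$ or $\|\underline\pi_2\|_{L^4_\bx}\lesssim\|\underline\pi_2\|_{W^{1,2}_\bx}$, whose time integrals are finite by Definition~\ref{def:strongSolution}. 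Concretely, for instance, the Bogovskij correction obeys
\[
\|\nabla\Bog_{\eta_1}(\Div\underline\bu_2)\|_{L^2_\bx}\lesssim\|(\mathbb I-\mathbf B_{\eta_2-\eta_1}):\nabla\underline\bu_2\|_{L^2_\bx}\lesssim\|\naby(\eta_1-\eta_2)\|_{L^4_\by}\|\nabla\underline\bu_2\|_{L^4_\bx}\lesssim\|\eta_1-\eta_2\|_{W^{2,2}_\by}\|\underline\bu_2\|_{W^{2,2}_\bx},
\]
which feeds directly into Gr\"onwall with prefactor $\|\underline\bu_2\|_{W^{2,2}_\bx}^2\in L^1_t$. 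All remaining remainders are handled the same way; a secondary point you gloss over is that several boundary integrals arising from the convective terms and from Reynolds' transport theorem must be grouped together to cancel (the paper's identity \eqref{boundaryrew}), but this is bookkeeping once the estimates above are in place.
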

\begin{proof}
It turned out more suitable to perform the uniqueness and stability analysis on the weaker geometry given by $\eta_1$. We therefore transfer the strong solution $(\eta_2,v_2)$ to the geometry given by $\eta_1$.
With the transformation \eqref{mapTwoToOne} in hand, we obtain the equations for $(\underline \bu_2, \eta_2)$ in $\Omega_{\eta_1}$ as follows:
\begin{align}
\label{contEqAloneBar'}
&\mathbf{B}_{\eta_2-\eta_1}:\nabx \underline{\bu}_2= 0,
\\
&\partial_t^2\eta_2 - \partial_t\Dely \eta_2 + \Dely^2\eta_2
=
g_2-\bn^\intercal \big[\mathbf{A}_{\eta_2-\eta_1}\nabx \underline{\bu}_2
-\mathbf{B}_{\eta_2-\eta_1} \underline{\pi}_2\big]\circ\bm{\varphi}_{\eta_1}\bn_{\eta_1} ,
\label{shellEqAloneBar'}
\\
&\partial_t \underline{\bu}_2  -\Delx\underline{\bu}_2
 +\nabla\underline{\pi}_2 
 = 
\mathbf{h}_{12}(\underline{\bu}_2)+
\divx  \big[\big(\mathbf{A}_{\eta_2-\eta_1}-\mathbb I_{3\times 3}\big)\nabx \underline{\bu}_2
+\big(\mathbb I_{3\times 3}-\mathbf{B}_{\eta_2-\eta_1} \big)\underline{\pi}_2\big],
\label{momEqAloneBar'}
\end{align}
where
\begin{align*}
\mathbf{h}_{12}(\underline{\bu}_2)
&=
(\mathbb I_{3\times 3}-J_{\eta_2-\eta_1})\partial_t \underline{\bu}_2
-
 J_{\eta_2-\eta_1}\nabx \underline{\bu}_2 \partial_t \bfPsi_{\eta_2-\eta_1}^{-1}\circ \bfPsi_{\eta_2-\eta_1} 
-
\mathbf B_{\eta_2-\eta_1}\nabx \underline{\bu}_2  \underline{\bu}_2
+
J_{\eta_2-\eta_1}  \underline{\mathbf f}_2,
\end{align*}
and the matrices $\mathbf{A}_{\eta_2-\eta_1}$ and $\mathbf{B}_{\eta_2-\eta_1}$ are similarly defined as in Subsection \ref{Sectionlinear} by replacing the subscript $\eta$ by $\eta_2-\eta_1$, respectively. 

Next we introduce a suitable bogovskij operator for our setting:
\[
\Bog_{\eta_1}(f):=\Bog(f\chi_{\Omega_{\eta_1}}),
\] 
where $\Bog$ is defined in Theorem~\ref{thm:ndBog}, depending on $\norm{\eta_1}_\infty=:L$ and $\norm{\nabla\eta_1}_\infty=:C_L$ only. Please note that this is the point where the additional Lipschitz assumption of the weak solution is crucially needed.

To obtain the difference estimate, we would like to test the equation for $(\bu_1-\underline\bu_2,\eta_1-\eta_2)$ by the pair $(\bu_1-\underline\bu_2+\mathrm{Bog}_{\eta_1}(\Div\underline\bu_2),\partial_t(\eta_1-\eta_2))$.
 However, $\bu_1$ is not smooth enough to qualify as a test function for the weak equation. We thus consider the following procedure:
In the first step, we use the energy inequality for $(\bu_1,\eta_1)$, that is
\begin{equation*}
\begin{aligned}
\label{energyEst'}
&\tfrac{1}{2}\int_\omega\vert\partial_t\eta_1 \vert^2\dy+\tfrac{1}{2}\int_\omega\vert\Dely\eta_1 \vert^2\dy
+
\int_0^t\int_\omega\vert\partial_t\naby\eta_1 \vert^2\dy\ds
\\&\quad+ \tfrac{1}{2} \int_{\Omega_{\eta_1(t)}}\vert\bu_1 \vert^2 \dx+
\int_0^t
 \int_{\Omega_{\eta_1(\sigma)}}\vert \nabla \bu_1 \vert^2 \dx\ds
\\& \leq \tfrac{1}{2}\int_\omega\vert\partial_t\eta_1(0) \vert^2\dy
 +\tfrac{1}{2}\int_\omega\vert\Dely\eta_1(0) \vert^2\dy+\tfrac{1}{2} \int_{\Omega_{\eta_1(0)}}\vert\bu_1(0) \vert^2 \dx\\
&\quad+
\int_0^t \int_\omega  g_1\partial_t\eta_1\dy\ds
 +\int_0^t \int_{\Omega_{\eta_1(\sigma)}} \bu_1\cdot \mathbf{f}_1 \dx\ds .
\end{aligned}
\end{equation*}
Next observe that
\begin{align*}
\int_0^t\int_\omega\Dely\eta_1\,\partial_t\Dely \eta_2\dy\ds
=-\int_0^t\int_\omega\partial_t\eta_1\cdot\Dely^2 \eta_2\dy\ds + 
\left[\int_\omega\Dely\eta_1\Dely \eta_2\dy\right]^{\sigma=t}_{\sigma=0},
\end{align*}
an identity that can be rigorously shown by using convolution  in space. This implies, by testing the equation for $(\bu_1,\eta_1)$ with  $(-\underline \bu_2+\mathrm{Bog}_{\eta_1}(\Div\underline\bu_2),-\partial_t\eta_2)$, that 
\begin{align*}
&\int_\omega \left(-\Dely\eta_1(t)\Dely\eta_2(t)-\partial_t \eta_1(t) \, \partial_t\eta_2(t)\right)\dy +\int_\omega \Dely\eta_1(0)\Dely\eta_2(0)\dy
\\&\quad-\int_{\Omega_{\eta_1(t)}} \bu_1(t)  
\cdot (\underline\bu_2(t)-\mathrm{Bog}_{\eta_1}(\Div\underline\bu_2(t)))\dx
\\
&\quad+
\int_0^t \int_\omega \big(\partial_t \eta_1\, \partial_t^2 \eta_2-\partial_t\naby\eta_1\cdot\partial_t\naby  \eta_2+
 g_1\, \partial_t\eta_2+\partial_t\eta_1\Dely^2 \eta_2 \big)\dy\ds
 \\
 &=
-\int_\omega \partial_t \eta_1(t) \, \partial_t\eta_2(t)\dy
-\int_{\Omega_{\eta_1(t)}} \bu_1(t)  
\cdot (\underline\bu_2(t)-\mathrm{Bog}_{\eta_1}(\Div\underline\bu_2(t)))\dx
\\
&\quad+
\int_0^t \int_\omega \big(\partial_t \eta_1\, \partial_t^2 \eta_2-\partial_t\naby\eta_1\cdot\partial_t\naby  \eta_2+
 g_1\, \partial_t\eta_2-\Dely\eta_1\,\partial_t\Dely \eta_2 \big)\dy\ds.
\\
&=-\int_\omega \partial_t \eta_1(0) \, \partial_t\eta_2(0) \dy
+
\int_{\Omega_{\eta_1(0)}} \bu_1(0)  \cdot (-\underline \bu_2(0)+\mathrm{Bog}_{\eta_1}(\Div\underline\bu_2(0)))\dx
\\
&\quad+\int_0^t  \int_{\Omega_{\eta_1(\sigma)}} \big(  \bu_1\cdot \partial_t (-\underline\bu_2+\mathrm{Bog}_{\eta_1}(\Div\underline\bu_2)) 
 \big) + \bu_1 \otimes \bu_1: \nabla  \mathrm{Bog}_{\eta_1}(\Div\underline\bu_2))
 \big) \dx\ds\\
&\quad+\int_0^t  \int_{\Omega_{\eta_1(\sigma)}}(\bu_1\cdot\nabla)\bu_1\cdot\underline\bu_2\dx\ds-\int_0^t  \int_{\partial\Omega_{\eta_1(\sigma)}} |\bu_1|^2\bfn\cdot\partial_t\eta_2 \bn_{\eta_1 }\circ\bfvarphi^{-1}_ {\eta_1} \dd\mathcal H^2\ds
\\
&\quad+\int_0^t  \int_{\Omega_{\eta_1(\sigma)}} \big( 
 -  
\nabla \bu_1:\nabla  (-\underline\bu_2+\mathrm{Bog}_{\eta_1}(\Div\underline\bu_2))+\bff_1\cdot (-\underline\bu_2+\mathrm{Bog}_{\eta_1}(\Div\underline\bu_2)) \big) \dx\ds.
\end{align*}
Finally, we multiply the (strong) equation for $(-\underline\bu_2,-\eta_2)$ by $(\bu_1-\underline\bu_2+\mathrm{Bog}_{\eta_1}(\Div\underline\bu_2),\partial_t(\eta_1-\eta_2))$. This implies after integration by parts
\begin{align*}
&\int_\omega \left(\frac{\abs{\partial_t \eta_2(t)}^2}{2}+\frac{\abs{\Dely\eta_2(t)}^2}{2}\right)\dy -\int_0^t\int_\omega \partial_t\naby\eta_2\cdot\partial_t\naby(\eta_1-\eta_2)\dy\dt
+\tfrac{1}{2} \int_{\Omega_{\eta_1(t)}}|\underline\bu_2 |^2 \dx
\\
&\quad-\tfrac{1}{2}  \int_0^t\int_{\partial\Omega_{\eta_1(\sigma)}}\bfn\circ\bfvarphi^{-1}_ {\eta_1}\cdot\partial_t\eta_1 \bn_{\eta_1 }\circ\bfvarphi^{-1}_ {\eta_1}|\underline\bu_2 |^2 \dd \mathcal H^2\dd \sigma
-\int_0^t\int_{\Omega_{\eta_1(\sigma)}}\partial_t\underline\bu_2 \cdot\bu_1 \dx\ds
\\
&\quad+ \int_0^t\int_{\Omega_{\eta_1(\sigma)}}\nabla \underline\bu_2:\nabla(\underline\bu_2-\bu_1)\dx\ds
\\
&=\tfrac{1}{2} \int_{\Omega_{\eta_1(0)}}|\underline\bu_2(0) |^2 \dx+\int_\omega \left(\frac{\abs{\partial_t \eta_2(0)}^2}{2}+\frac{\abs{\Dely\eta_2(0)}^2}{2}\right)\dy 
\\
&\quad-\int_0^t\int_{\Omega_{\eta_1(\sigma)}}\mathbf h_{12}(\underline\bu_2) \cdot\big(\bu_1- \underline\bu_2+ \mathrm{Bog}_{\eta_1}(\Div\underline\bu_2)\big)\dx\ds
\\
&\quad-
\int_0^t\int_{\Omega_{\eta_1(\sigma)}}\underline\bu_2 \cdot \partial_t\mathrm{Bog}_{\eta_1}(\Div\underline\bu_2)\dx\ds+
\int_{\Omega_{\eta_1}}\underline\bu_2 \cdot \mathrm{Bog}_{\eta_1}(\Div\underline\bu_2)\dx
\\
&\quad-
\int_{\Omega_{\eta_1(0)}}\underline\bu_2(0) \cdot \mathrm{Bog}_{\eta_1(0)}(\Div\underline\bu_2(0))\dx
+ \int_0^t\int_{\Omega_{\eta_1(\sigma)}}\nabla \underline\bu_2:\nabla\mathrm{Bog}_{\eta_1}(\Div\underline\bu_2)\dx\ds
\\
&\quad+\int_0^t\int_{\Omega_{\eta_1(\sigma)}}\big(\mathbf A_{\eta_2-\eta_1}-\mathbb I_{3\times 3}\big)\nabla\underline\bu_2:\nabla(\bu_1-\underline \bu_2+\mathrm{Bog}_{\eta_1}(\Div\underline\bu_2))\dx\ds\\
&\quad+\int_0^t\int_{\Omega_{\eta_1(\sigma)}}\big(\mathbb I_{3\times 3}-\mathbf B_{\eta_2-\eta_1}\big)\underline \pi_2:\nabla(\bu_1-\underline \bu_2+\mathrm{Bog}_{\eta_1}(\Div\underline\bu_2))\dx\ds\\
&\quad+\int_0^t\int_\omega\partial_t^2\eta_2~\partial_t\eta_1\dy\ds-\int_0^t\int_\omega\naby\Dely\eta_2~\partial_t\naby\eta_1\dy\ds-\int_0^t\int_\omega g_2\partial_t(\eta_1-\eta_2)\dy\ds\\
&\quad +\int_0^t\int_\omega\partial_t\eta_2~\partial_t\eta_1\dy\ds-\int_\omega\partial_t\eta_2(t)~\partial_t\eta_1(t)\dy.
\end{align*}

Combining the above 
we find that
\begin{align}
&\tfrac{1}{2}\int_{\Omega_{\eta_1(t)}}|\bu_1(t)-\underline{\bu}_2(t)|^2\dx+\int_0^t
\int_{\Omega_{\eta_1(\sigma)}}|\nabla(\bu_1-\underline{\bu}_2)|^2\dx\ds\nonumber\\
&\quad+\tfrac{1}{2}\int_\omega|\partial_t(\eta_1-\eta_2)(t)|^2\dy+\int_0^t\int_\omega|\partial_t\nabla_{\by}(\eta_1-\eta_2)|^2\dy\ds+\tfrac{1}{2}\int_\omega|\Dely(\eta_1-\eta_2)(t)|^2\dy\nonumber\\
&\leq 
\tfrac{1}{2}\int_{\Omega_{\eta_1(0)}}|\bu_1(0)-\underline{\bu}_2(0)|^2\dx+\tfrac{1}{2}\int_\omega|\partial_t(\eta_1-\eta_2)(0)|^2\dy+\tfrac{1}{2}\int_\omega|\Dely(\eta_1-\eta_2)(0)|^2\dy\nonumber\\
&\quad-\int_0^t\int_{\Omega_{\eta_1(\sigma)}}\big((\mathbb I_{3\times 3}-J_{\eta_2-\eta_1})\partial_t \underline{\bu}_2\big)\cdot\big(\bu_1- \underline\bu_2+ \mathrm{Bog}_{\eta_1}(\Div\underline\bu_2)\big)\dx\ds\nonumber\\
&\quad+\int_0^t\int_{\Omega_{\eta_1(\sigma)}}\big(
 J_{\eta_2-\eta_1}\nabx \underline{\bu}_2 \partial_t \bfPsi_{\eta_2-\eta_1}^{-1}\circ \bfPsi_{\eta_2-\eta_1}\big)\cdot\big(\bu_1- \underline\bu_2+ \mathrm{Bog}_{\eta_1}(\Div\underline\bu_2)\big)\dx\ds\nonumber\\
&\quad+\int_0^t\int_{\Omega_{\eta_1(\sigma)}}\nabx \underline{\bu}_2  \underline{\bu}_2\cdot\big(\bu_1- \underline\bu_2+ \mathrm{Bog}_{\eta_1}(\Div\underline\bu_2)\big)\dx\ds\nonumber
\\
&\quad+\tfrac{1}{2} \int_0^t\int_{\partial\Omega_{\eta_1(\sigma)}}\bfn\circ\bfvarphi^{-1}_ {\eta_1}\cdot\partial_t\eta_1 \bn_{\eta_1 }\circ\bfvarphi^{-1}_ {\eta_1}|\underline\bu_2 |^2 \dd \mathcal H^2\ds\nonumber\\
&\quad- \int_0^t\int_{\partial\Omega_{\eta_1(\sigma)}}\bfn\circ\bfvarphi^{-1}_ {\eta_1}\cdot\partial_t\eta_2 \bn_{\eta_1 }\circ\bfvarphi^{-1}_ {\eta_1}| \bu_1 |^2 \dd \mathcal H^2\ds\nonumber
\\
&\quad
+\int_0^t\int_{\Omega_{\eta_1(\sigma)}}\big( 
\mathbf B_{\eta_2-\eta_1}- \mathbb I
_{3\times 3}\big)\nabx \underline{\bu}_2  \underline{\bu}_2\cdot\big(\bu_1- \underline\bu_2+ \mathrm{Bog}_{\eta_1}(\Div\underline\bu_2)\big)\dx\ds\nonumber\\
&\quad+
\int_0^t\int_{\Omega_{\eta_1(\sigma)}}(\bu_1-\underline\bu_2) \cdot \partial_t\mathrm{Bog}_{\eta_1}(\Div\underline\bu_2)\dx\ds-
\int_{\Omega_{\eta_1(t)}}(\bu_1-\underline\bu_2) \cdot \mathrm{Bog}_{\eta_1}(\Div\underline\bu_2)\dx\nonumber\\
&\quad+
\int_{\Omega_{\eta_1(0)}}(\bu_1-\underline\bu_2)(0) \cdot \mathrm{Bog}_{\eta_1(0)}(\Div\underline\bu_2(0))\dx
- \int_0^t\int_{\Omega_{\eta_1(\sigma)}}\nabla(\bu_1- \underline\bu_2):\nabla\mathrm{Bog}_{\eta_1}(\Div\underline\bu_2)\dx\ds\nonumber
\\
&\quad+\int_0^t\int_{\Omega_{\eta_1(\sigma)}}\big(\mathbf A_{\eta_2-\eta_1}-\mathbb I_{3\times 3}\big)\nabla\underline\bu_2:\nabla(\bu_1-\underline \bu_2+\mathrm{Bog}_{\eta_1}(\Div\underline\bu_2))\dx\ds\nonumber\\
&\quad+\int_0^t\int_{\Omega_{\eta_1(\sigma)}}\big(\mathbb I_{3\times 3}-\mathbf B_{\eta_2-\eta_1}\big)\underline \pi_2:\nabla(\bu_1-\underline \bu_2+\mathrm{Bog}_{\eta_1}(\Div\underline\bu_2))\dx\ds\nonumber\\
&\quad+\int_0^t \int_\omega ( g_1-g_2)\partial_t(\eta_1-\eta_2)\dy\ds
 +\int_0^t \int_{\Omega_{\eta_1(\sigma)}}  (\bu_1-\underline \bu_2)\cdot (\mathbf{f}_1-\underline{\mathbf f}_2)\dx\ds\nonumber\\
 &\quad+\int_0^t \int_{\Omega_{\eta_1(\sigma)}} (\mathbb I_{3\times 3}-J_{ \eta_2-\eta_1})\underline{\mathbf f}_2\cdot (\bu_1-\underline \bu_2)\dx\ds+\int_0^t\int_{\Omega_{\eta_1}}(\mathbb I_{3\times 3}-J_{\eta_2-\eta_1})\underline{\mathbf f}_2\mathrm{Bog}_{\eta_1}(\Div \underline \bu_2)\dx\ds\nonumber\\
 &\quad+\int_0^t\int_{\Omega_{\eta_1(\sigma)}}\bu_1\otimes \bu_1:\nabla\mathrm{Bog}_{\eta_1}(\Div\underline \bu_2)\dx\ds+\int_0^t\int_{\Omega_{\eta_1(\sigma)}}\bu_1\cdot\nabla\bu_1\cdot\underline \bu_2\dx\ds\nonumber\\
 &\quad+\int_0^t\int_{\Omega_{\eta_1(\sigma)}}(\mathbf f_1-\underline{\mathbf f}_2)\mathrm{Bog}_{\eta_1}(\Div\underline \bu_2)\dx\ds\nonumber\\
 &=:\sum_{i=1}^{22} R_{i}.\label{mainineq}
\end{align}
Note that $R_1$, $R_2$ and $R_3$ are in good form and so we start the estimate for the remaining terms on the right side of \eqref{mainineq}.
For the terms including the forcing terms, we estimate as follows:
\begin{align*}
&R_{12}+R_{16}+R_{17}+R_{18}+R_{19}+R_{22}\\
&\leq
 \delta\int_0^t\left(\lVert\bu_1-\underline \bu_2\rVert_{L^2(\Omega_{\eta_1})}^2+\lVert\nabla(\bu_1-\underline \bu_2)\rVert_{L^2(\Omega_{\eta_1})}^2 +\lVert\partial_t\eta_1-\partial_t\eta_2\rVert_{L^2(\omega)}^2\right)\ds\\
 &\quad+C(\delta)\int_0^t\lVert\underline {\mathbf f}_2\rVert_{L^2(\Omega_{\eta_1})}^2\lVert\eta_1-\eta_2\rVert_{W^{2,2}(\omega)}^2\ds+C(\delta)\int_0^t\left(\lVert g_1-g_2\rVert_{L^2(\omega)}^2+\lVert \mathbf f_1-\underline {\mathbf f}_2\rVert_{L^2(\Omega_{\eta_1})}^2\right)\ds
 \\
 &\quad+C(\delta)\lVert \bu_1(0)-\underline \bu_2(0)\rVert_{L^2(\Omega_{\eta_1(0)})}^2.
\end{align*}

Using the fact that $\mathbb I_{3\times 3}-J_{\eta_2-\eta_1}\sim \nabla_{\by}(\eta_2-\eta_1)$ we estimate $R_4$:
\begin{align*}
R_4&\lesssim 
\int_0^t\lVert\mathbb I_{3\times 3}-J_{\eta_2-\eta_1}\rVert_{L^4(\Omega_{\eta_1(\sigma)})}\lVert\partial_t\underline \bu_2\rVert_{L^2(\Omega_{\eta_1(\sigma)})}\lVert\bu_1- \underline\bu_2+ \mathrm{Bog}_{\eta_1}(\Div\underline\bu_2)\rVert_{L^4(\Omega_{\eta_1(\sigma)})}\ds\\
&\lesssim \int_0^t\lVert\mathbb I_{3\times 3}-J_{\eta_2-\eta_1}\rVert_{W^{1,2}(\Omega_{\eta_1(\sigma)})}\lVert\partial_t\underline \bu_2\rVert_{L^2(\Omega_{\eta_1(\sigma)})}\lVert\bu_1- \underline\bu_2+ \mathrm{Bog}_{\eta_1}(\Div\underline\bu_2)\rVert_{W^{1,2}(\Omega_{\eta_1(\sigma)})} \ds \\
&\leq \delta\int_0^t\lVert\nabla(\bu_1-\underline \bu_2)\rVert_{L^2(\Omega_{\eta_1(\sigma)})}^2\ds +C(\delta) \int_0^t\lVert\partial_t\underline \bu_2\rVert_{L^2(\Omega_{\eta_1(\sigma)})}^2\lVert\eta_1-\eta_2\rVert_{W^{2,2}(\omega)}^2\ds,
\end{align*}
where we used the estimate:
\begin{align*}
\lVert \mathrm{Bog}_{\eta_1}(\Div\underline \bu_2)\rVert_{W^{1,2}(\Omega_{\eta_1})}=\lVert \mathrm{Bog}_{\eta_1}(\Div(\underline \bu_2-\bu_1))\rVert_{W^{1,2}(\Omega_{\eta_1})}\lesssim \lVert\bu_1-\underline \bu_2\rVert_{W^{1,2}(\Omega_{\eta_1})}.
\end{align*}
According to the properties of the map $\bfPsi_{\eta_2-\eta_1}$ discussed in Section \ref{ssec:geom}, we continue estimating $R_5$:
\begin{align*}
R_5
& \lesssim \int_0^t\lVert\nabla\underline \bu_2\rVert_{L^4(\Omega_{\eta_1(\sigma)})}\lVert\partial_t\bfPsi_{\eta_2-\eta_1}^{-1}\circ\bfPsi_{\eta_2-\eta_1}\rVert_{L^4(\Omega_{\eta_1(\sigma)})}\lVert\bu_1-\underline \bu_2\rVert_{L^2(\Omega_{\eta_1(\sigma)})}\ds\\
&\lesssim \int_0^t\lVert \underline \bu_2\rVert_{W^{2,2}(\Omega_{\eta_1(\sigma)})}\lVert\partial_t(\eta_1-\eta_2)\rVert_{L^4(\omega)}\lVert\bu_1-\underline \bu_2\rVert_{L^2(\Omega_{\eta_1(\sigma)})}\ds\\
&\leq\delta\int_0^t\lVert\partial_t\nabla_{\by}(\eta_1-\eta_2)\rVert_{L^2(\omega)}^2\ds+C(\delta)\int_0^t\lVert\underline \bu_2\rVert_{W^{2,2}(\Omega_{\eta_1(\sigma)})}^2\lVert\bu_1-\underline \bu_2\rVert_{L^2(\Omega_{\eta_1(\sigma)})}^2\ds.
\end{align*}
For the $R_9$ term, we use the fact that $\Vert\nabx\underline{\bv}_2\Vert_{L^2_\bx}$ is essentially bounded in time and $\mathbb I_{3\times 3}-\mathbf B_{\eta_1-\eta_2}\sim \nabla_{\by}(\eta_1-\eta_2)$ to obtain
\begin{align*}
R_9
&\lesssim\int_0^t\Vert I_{3\times 3}-\mathbf B_{\eta_1-\eta_2}\Vert_{L^4(\Omega_{\eta_1(\sigma)})}\Vert\underline{\bv}_2\Vert_{L^\infty(\Omega_{\eta_1(\sigma)})}
\Vert \bv_1-\underline{\bv}_2\Vert_{L^4(\Omega_{\eta_1(\sigma)})}\ds\\
&\leq\delta\int_0^t\Vert \nabx(\bv_1-\underline{\bv}_2)\Vert_{L^2(\Omega_{\eta_1(\sigma)})}^2\ds+C(\delta)\int_0^t
\Vert\underline{\bv}_2\Vert_{W^{2,2}(\Omega_{\eta_1(\sigma)})}^2
\Vert {\eta_1-\eta_2}\Vert_{W^{2,2}(\omega)}^2
\ds.
\end{align*}
We now start the estimate for $R_{10}$. Using the properties of the Bogovskij operator (please refer to Remark \ref{time-bog}), we have 
\begin{align*}
R_{10}&\leq 
\int_0^t\lVert\bu_1-\underline \bu_2\rVert_{L^6(\Omega_{\eta_1(\sigma)})}\lVert(\mathbb I_{3\times 3}-\mathbf B_{\eta_2-\eta_1})\partial_t \nabla\underline \bu_2\rVert_{W^{-1, \frac{6}{5}}(\Omega_{\eta_1(\sigma)})}\ds\\
&\quad+\int_0^t\lVert\bu_1-\underline \bu_2\rVert_{L^6(\Omega_{\eta_1(\sigma)})}\lVert\partial_t(\mathbf B_{\eta_2-\eta_1})\nabla\underline \bu_2\rVert_{W^{-1, \frac{6}{5}}(\Omega_{\eta_1(\sigma)})}\ds\\
&\lesssim \int_0^t\lVert\bu_1-\underline \bu_2\rVert_{W^{1,2}(\Omega_{\eta_1(\sigma)})}\lVert\mathbb I_{3\times 3}-\mathbf B_{\eta_2-\eta_1}\rVert_{W^{1,2}(\Omega(\eta_1(\sigma))}\lVert\partial_t\underline \bu_2\rVert_{L^2(\Omega_{\eta_1(\sigma)})}\\
&\quad+\int_0^t\lVert\bu_1-\underline \bu_2\rVert_{W^{1,2}(\Omega_{\eta_1(\sigma)})}\lVert\partial_t(\eta_1-\eta_2)\rVert_{L^2(\omega)}\lVert\nabla\underline \bu_2\rVert_{W^{1,2}(\Omega_{\eta_1(\sigma)})}\ds\\
&\leq \delta \int_0^t\lVert\nabla(\bu_1-\underline\bu_2)\rVert_{L^2(\Omega_{\eta_1(\sigma)})}^2+C(\delta)\int_0^t\lVert\partial_t\underline \bu_2\rVert_{L^2(\Omega_{\eta_1(\sigma)})}^2\lVert\eta_1-\eta_2\rVert_{W^{2,2}(\omega)}^2\ds\\
&\quad+C(\delta)\int_0^t\lVert\underline \bu_2\rVert_{W^{2,2}(\Omega_{\eta_1(\sigma)})}^2\lVert\partial_t(\eta_1-\eta_2)\rVert_{L^2(\omega)}^2\ds.
\end{align*}
To estimate $R_{11}$, we rewrite $\Div\underline \bu_2$ as follows:
\begin{equation}\label{divv2}
\Div\underline \bu_2=\mathbb I_{3\times 3}:\nabla\underline \bu_2=(\mathbb I_{3\times 3}-\mathbf B_{\eta_2-\eta_1}):\nabla\underline \bu_2,
\end{equation}
where we take into account the divergence free condition for $\underline \bu_2$ on $\Omega_{\eta_1}$ derived in \eqref{contEqAloneBar'}.
With \eqref{divv2}, we have
\begin{align*}
R_{11}
&\leq \lVert\bu_1-\underline\bu_2\rVert_{L^2(\Omega_{\eta_1})}\lVert\mathrm{Bog}\left((\mathbb I_{3\times 3}-\mathbf B):\nabla\underline\bu_2\right)\rVert_{L^2(\Omega_{\eta_1})}\\
&\leq \lVert\bu_1-\underline\bu_2\rVert_{L^2(\Omega_{\eta_1})}\lVert(\mathbb I_{3\times 3}-\mathbf B):\nabla\underline\bu_2\rVert_{W^{-1,2}(\Omega_{\eta_1})}\\
&\leq \delta\lVert\bu_1-\underline\bu_2\rVert_{ L^2(\Omega_{\eta_1})}^2+C(\delta)\lVert(\mathbb I_{3\times 3}-\mathbf B):\nabla\underline\bu_2\rVert_{ L^{\frac{6}{5}}(\Omega_{\eta_1})}^2\\
&\leq \delta\lVert\bu_1-\underline\bu_2\rVert_{ L^2(\Omega_{\eta_1})}^2+C(\delta)\lVert\nabla\underline\bu_2\rVert_{ L^2(\Omega_{\eta_1})}^2\lVert\nabla_{\by}(\eta_1-\eta_2)\rVert_{L^3(\omega)}^2\\
&\leq \delta\lVert\bu_1-\underline\bu_2\rVert_{ L^2(\Omega_{\eta_1})}^2+C(\delta)\lVert\nabla_{\by}(\eta_1-\eta_2)\rVert_{L^2(\omega)}^{\frac{4}{3}}\lVert\nabla_{\by}(\eta_1-\eta_2)\rVert_{W^{1,2}(\omega)}^{\frac{2}{3}}\\
&\leq \delta\lVert\bu_1-\underline\bu_2\rVert_{ L^2(\Omega_{\eta_1})}^2+\varepsilon\lVert\nabla_{\by}(\eta_1-\eta_2)\rVert_{W^{1,2}(\omega)}^2+C(\delta, \varepsilon)\lVert\nabla_{\by}(\eta_1-\eta_2)\rVert_{L^2(\omega)}^2\\
&\leq \delta\lVert\bu_1-\underline\bu_2\rVert_{ L^2(\Omega_{\eta_1})}^2+\varepsilon\lVert\nabla_{\by}(\eta_1-\eta_2)\rVert_{W^{1,2}(\omega)}^2\\
&\qquad\qquad \qquad +C(\delta, \varepsilon)\lVert\nabla_{\by}(\eta_1-\eta_2)\rVert_{L^2(0, t;L^2(\omega))}\lVert\naby(\eta_1-\eta_2)\rVert_{W^{1,2}(0,t; L^2(\omega)} 
\\
&\leq \delta\lVert\bu_1-\underline\bu_2\rVert_{ L^2(\Omega_{\eta_1})}^2+\varepsilon\lVert\nabla_{\by}(\eta_1-\eta_2)\rVert_{W^{1,2}(\omega)}^2+\nu\lVert\naby(\eta_1-\eta_2)\rVert_{W^{1,2}(0, t; L^2(\omega))}^2
\\
&\qquad\qquad\qquad +C(\delta, \varepsilon, \nu)\lVert\eta_1-\eta_2\rVert_{L^2(0, t; W^{2,2}(\omega))}^2,
\end{align*}
where we used interpolation in time as follows:
\begin{align*}
\lVert\nabla_{\by}(\eta_1-\eta_2)\rVert_{L^\infty(0, t; L^2(\omega))}\lesssim \lVert\nabla_{\by}(\eta_1-\eta_2)\rVert_{L^2(0, t; L^2(\omega))}^{\frac{1}{2}}\lVert\nabla_{\by}(\eta_1-\eta_2)\rVert_{W^{1,2}(0, t; L^2(\omega))}^{\frac{1}{2}}. 
\end{align*}
Using \eqref{divv2}, we have the estimate for $R_{13}$:
\begin{align*}
R_{13}&\leq \int_0^t\lVert\nabla(\bu_1-\underline \bu_2)\rVert_{L^2(\Omega_{\eta_1(\sigma)})}\lVert\nabla\mathrm{Bog}_{\eta_1}(\Div\underline \bu_2)\rVert_{L^2(\Omega_{\eta_1(\sigma)})}\ds\\
&\lesssim\int_0^t\lVert\nabla(\bu_1-\underline \bu_2)\rVert_{L^2(\Omega_{\eta_1(\sigma)})}\lVert(\mathbb I_{3\times 3}-\mathbf B_{\eta_2-\eta_1}):\nabla\underline \bu_2\rVert_{L^2(\Omega_{\eta_1(\sigma)})}\\
&\lesssim \int_0^t\lVert\nabla(\bu_1-\underline \bu_2)\rVert_{L^2(\Omega_{\eta_1(\sigma)})}\lVert\nabla_{\by}(\eta_1-\eta_2)\rVert_{L^4(\omega)}\lVert\nabla\underline \bu_2\rVert_{L^4(\Omega_{\eta_1(\sigma)})}\\
&\leq \delta\int_0^t\lVert\nabla(\bu_1-\underline \bu_2)\rVert_{L^2(\Omega_{\eta_1(\sigma)})}^2+C(\delta)\int_0^t\lVert\underline \bu_2\rVert_{W^{2,2}(\Omega_{\eta_1(\sigma)})}^2\lVert\eta_1-\eta_2\rVert_{W^{2,2}(\omega)}^2,
\end{align*}
where we also used 
\begin{align*}
\lVert\nabla\mathrm{Bog}_{\eta_1}(\Div\underline\bu_2)\rVert_{L^2(\Omega_{\eta_1(\sigma)})}\lesssim \lVert\Div\underline \bu_2\rVert_{L^2(\Omega_{\eta_1(\sigma)})}. 
\end{align*}
Recalling the regularity $\underline \pi_2\in L^2(I,W^{1,2}(\Omega_{\eta_1}))$and $\underline \bu_2\in L^2(I,W^{2,2}(\Omega_{\eta_1}))$, we estimate $R_{14}$ and $R_{15}$ in a similar way. Notice that $\mathbb I_{3\times 3}-\mathbf B_{\eta_1-\eta_2}\sim \nabla_{\by}(\eta_1-\eta_2)$ and $\mathbf A_{\eta_2-\eta_2}-\mathbb I_{3\times 3}\sim \nabla_{\by}(\eta_1-\eta_2)$, we thus have 
\begin{equation*}\label{14est}
R_{14}\leq \delta\int_0^t\lVert\nabla(\bu_1-\underline \bu_2)\rVert_{L^2(\Omega_{\eta_1(\sigma)})}^2\ds+C(\delta)\int_0^t\lVert\underline \bu_2\rVert_{W^{2,2}(\Omega_{\eta_1(\sigma)})}^2\lVert\eta_1-\eta_2\rVert_{W^{2,2}(\omega)}^2\ds,
\end{equation*}
and 
\begin{equation*}\label{15es}
R_{15}\leq \delta\int_0^t\lVert\nabla(\bu_1-\underline \bu_2)\rVert_{L^2(\Omega_{\eta_1(\sigma)})}^2\ds +C(\delta) \int_0^t\lVert\underline \pi_2\rVert_{W^{1,2}(\Omega_{\eta_1(\sigma)})}^2\lVert\eta_1-\eta_2\rVert_{W^{2,2}(\omega)}^2\ds.
\end{equation*}
Now we deal with the estimate of $R_6$, $R_7$, $R_8$ and $R_{20}$, $R_{21}$ together. We first take an integration by part of $R_{20}$ and obtain
\begin{equation*}
\begin{aligned}
R_{20}=-\int_0^t\int_{\Omega_{\eta_1(\sigma)}}\bu_1\cdot \nabla\bu_1\cdot \mathrm{Bog}_{\eta_1}(\Div\underline \bu_2)\dx\ds,
\end{aligned}
\end{equation*}
where we do not have the boundary term since the Bogovskij operator vanishes on the boundary. We then rewrite $R_6$ in the following way:
\begin{align*}
R_6
&=\int_0^t\int_{\Omega_{\eta_1(\sigma)}}(\underline \bu_2-\bu_1)\nabla\underline \bu_2(\bu_1-\underline\bu_2+\mathrm{Bog}_{\eta_1}(\Div\underline \bu_2))\dx\ds\\
&\quad+\int_0^t\int_{\Omega_{\eta_1(\sigma)}}(\bu_1-\underline\bu_2)\nabla(\underline\bu_2-\bu_1)(\bu_1-\underline\bu_2+\mathrm{Bog}_{\eta_1}(\Div\underline \bu_2))\dx\ds\\
&\quad+\int_0^t\int_{\Omega_{\eta_1(\sigma)}}\underline\bu_2\nabla(\underline\bu_2-\bu_1)(\bu_1-\underline\bu_2+\mathrm{Bog}_{\eta_1}(\Div\underline \bu_2))\dx\ds\\
&\quad+\int_0^t\int_{\Omega_{\eta_1(\sigma)}}\bu_1\nabla\bu_1(\bu_1-\underline\bu_2+\mathrm{Bog}_{\eta_1}(\Div\underline\bu_2))\dx\ds\\
&=R_{6,0}+R_{6,1}+R_{6,2}+R_{6,3}.
\end{align*}
For $R_{6,0}$, we have
\begin{align*}
R_{6,0}&\leq \int_0^t\lVert\underline\bu_2-\bu_1\rVert_{L^4(\Omega_{\eta_1(\sigma)})}\lVert\nabla\underline\bu_2\rVert_{L^2(\Omega_{\eta_1(\sigma)})}\lVert\bu_1-\underline\bu_2+\mathrm{Bog}_{\eta_1}(\Div\underline\bu_2)\rVert_{L^4(\Omega_{\eta_1(\sigma)})}\ds\\
&\lesssim \int_0^t\lVert\underline\bu_2-\bu_1\rVert_{L^4(\Omega_{\eta_1(\sigma)})}^2\lVert\nabla\underline\bu_2\rVert_{L^2(\Omega_{\eta_1(\sigma)})}\ds\\
&\lesssim\int_0^t\lVert\underline\bu_2-\bu_1\rVert_{L^2(\Omega_{\eta_1(\sigma)})}^{\frac{1}{2}}\lVert\nabla(\bu_1-\underline\bu_2)\rVert_{L^2(\Omega_{\eta_1(\sigma)})}^{\frac{3}{2}}\ds\\
&\leq \delta\int_0^t\lVert\nabla(\bu_1-\underline\bu_2)\rVert_{L^2(\Omega_{\eta_1(\sigma)})}^2\ds+C(\delta)\int_0^t\lVert\bu_1-\underline\bu_2\rVert_{L^2(\Omega_{\eta_1(\sigma)})}^2\ds,
\end{align*}
where we used the fact that $\underline\bu_2\in L^\infty(I, W^{1,2}(\Omega_{\eta_1}))$ and the interpolation inequality in $3D$:
$$\lVert\underline\bu_2-\bu_1\rVert_{L^4(\Omega_{\eta_1(\sigma)})}\lesssim \lVert\underline\bu_2-\bu_1\rVert_{L^2(\Omega_{\eta_1(\sigma)})}^{\frac{1}{4}}\lVert\nabla(\bu_1-\underline\bu_2)\rVert_{L^2(\Omega_{\eta_1(\sigma)})}^{\frac{3}{4}}. $$
Taking an integration by parts with respect to space, we estimate $R_{6,1}$ as
\begin{align*}
R_{6,1}&=-\int_0^t\int_{\Omega_{\eta_1(\sigma)})}\frac{1}{2}\nabla|\bu_1-\underline\bu_2|^2\cdot(\bu_1-\underline\bu_2+\mathrm{Bog}_{\eta_1}(\Div\underline\bu_2))\dx\ds\\
&=-\frac{1}{2}\int_0^t\int_{\partial\Omega_{\eta_1}}|\bu_1-\underline\bu_2|^2\bn\cdot(\partial_t\eta_1-\partial_t\eta_2)\bn_{\eta_1 }\circ\bfvarphi_{\eta_1}^{-1}\dd \mathcal H^2\ds,
\end{align*}
where we have used the fact that after integration by parts,
\begin{align*}
\divx(\bu_1-\underline\bu_2+\mathrm{Bog}_{\eta_1}(\Div\underline\bu_2))=\divx(-\underline\bu_2+\mathrm{Bog}_{\eta_1}(\Div\underline\bu_2))=0\qquad \text{in}~~~~~~~ \Omega_{\eta_1}.
\end{align*}
The estimate of $R_{6,2}$ is straightforward and we get
\begin{align*}
R_{6,2}&\leq \int_0^t\lVert\nabla(\bu_1-\underline\bu_2)\rVert_{L^2(\Omega_{\eta_1(\sigma)})}\lVert\underline\bu_2\rVert_{L^\infty(\Omega_{\eta_1(\sigma)})}\lVert\bu_1-\underline\bu_2+\mathrm{Bog}_{\eta_1}(\Div\underline\bu_2)\rVert_{L^2(\Omega_{\eta_1(\sigma)})}\ds\\
&\leq \delta\int_0^t\lVert\nabla(\bu_1-\underline\bu_2)\rVert_{L^2(\Omega_{\eta_1(\sigma)})}^2\ds+C(\delta)\int_0^t\lVert\underline \bu_2\rVert_{W^{2,2}(\Omega_{\eta_1(\sigma)})}^2\lVert\bu_1-\underline\bu_2\rVert_{L^2(\Omega_{\eta_1(\sigma)})}^2\ds.
\end{align*}
Adding $R_6$, $R_7$, $R_8$ and $R_{20}$ together, $R_{21}$, we arrive at
\begin{align}
\nonumber
&R_6+R_7+R_8+R_{20}+R_{21}\\
\nonumber&\leq \delta\int_0^t\lVert\nabla(\bu_1-\underline\bu_2)\rVert_{L^2(\Omega_{\eta_1(\sigma)})}^2\ds+C(\delta)\int_0^t\left(1+\lVert\underline \bu_2\rVert_{W^{2,2}(\Omega_{\eta_1(\sigma)})}^2\right)\lVert\bu_1-\underline\bu_2\rVert_{L^2(\Omega_{\eta_1(\sigma)})}^2\ds\\
\nonumber&\quad+\int_0^t\int_{\partial\Omega_{\eta_1}}\bfn\circ\bfvarphi_{\eta_1}^{-1}\left(\frac{1}{2}|\bu_1|^2\partial_t\eta_1-\frac{1}{2}|\bu_1-\underline\bu_2|^2(\partial_t\eta_1-\partial_t\eta_2)\right)\bfn_{\eta_1}\circ\bfvarphi_{\eta_1}^{-1}\dd \mathcal{H}^2\ds\\
\label{sumbound}&\quad+\int_0^t\int_{\partial\Omega_{\eta_1}}\bfn\circ\bfvarphi_{\eta_1}^{-1}\left(\frac{1}{2}|\underline\bu_2|^2\partial_t\eta_1-|\bu_1|^2\partial_t\eta_2\right)\bfn_{\eta_1}\circ\bfvarphi_{\eta_1}^{-1}\dd \mathcal{H}^2\ds,
\end{align}
where we use an integration by parts for the following term
$$\int_0^t\int_{\Omega_{\eta_1(\sigma)}}\bu_1\cdot\nabla\bu_1\bu_1\dx\ds=\int_0^t\int_{\partial\Omega_{\eta_1}}\frac{1}{2}\bfn|\bu_1|^2\partial_t\eta_1\bfn_{\eta_1}\circ\bfvarphi_{\eta_1}^{-1}\dd \mathcal{H}^2\ds.  $$
To deal with the boundary terms on the right side of \eqref{sumbound}, we notice that
\begin{equation}\label{boundaryrew}
\begin{aligned}
&\frac{1}{2}|\bu_1|^2\partial_t\eta_1-\frac{1}{2}|\bu_1-\underline\bu_2|^2(\partial_t\eta_1-\partial_t\eta_2)+\frac{1}{2}|\underline\bu_2|^2\partial_t\eta_1-|\bu_1|^2\partial_t\eta_2\\
&=-\frac{1}{2}|\bu_1-\underline\bu_2|^2\partial_t\eta_2-\underline\bu_2(\bu_1-\underline\bu_2)\partial_t\eta_2+(\bu_1-\underline\bu_2)\underline\bu_2(\partial_t\eta_1-\partial_t\eta_2)+\underline\bu_2^2(\partial_t\eta_1-\partial_t\eta_2)\\
&=: \mathrm{I+II+III+IV}.
\end{aligned}
\end{equation}
Recalling the definition of the map $\bfvarphi_{\eta_1}:\omega\to \partial\Omega_{\eta_1}$ and using the boundary conditions of $\bu_1$ and $\underline\bu_2$, we note that the boundary integration of $\mathrm{II}$ and $\mathrm{IV}$ in \eqref{boundaryrew} satisfy
\begin{equation*}
\begin{aligned}
&\int_0^t\int_{\partial\Omega_{\eta_1}}\bfn\circ\bfvarphi_{\eta_1}^{-1}\cdot (\mathrm{II}+\mathrm{IV})\bfn_{\eta_1}\circ\bfvarphi_{\eta_1}^{-1}\dd \mathcal{H}^2\ds\\
&=-\int_0^t\int_\omega\partial_t\eta_2\bfn(\partial_t\eta_1\bfn-\partial_2\eta_2\bfn)\partial_t\eta_2\dy\ds+\int_0^t\int_{\omega}|\partial_t\eta_2\bfn|^2(\partial_t\eta_1-\partial_t\eta_2)\dy\ds\\
&=0.
\end{aligned}
\end{equation*}
Then we estimate the remaining two boundary integrals in \eqref{sumbound} which are related to $\mathrm{I}$ and $\mathrm{III}$. We first have 
\begin{equation*}
\begin{aligned}
&\int_0^t\int_{\partial\Omega_{\eta_1}}\bfn\circ\bfvarphi_{\eta_1}^{-1}\cdot~ \mathrm{I}~\bfn_{\eta_1}\circ\bfvarphi_{\eta_1}^{-1}\dd \mathcal{H}^2\ds\\
&\leq 
\int_0^t\lVert\bu_1-\underline\bu_2\rVert_{L^2(\partial\Omega_{\eta_1})}^2\lVert\partial_t\eta_2\bfn_{\eta_1}\circ\bfvarphi_{\eta_1}^{-1}\rVert_{L^\infty(\partial\Omega_{\eta_1})}\ds\\
&\lesssim \int_0^t\lVert\bu_1-\underline\bu_2\rVert_{W^{\frac{1}{6}, 2}(\partial\Omega_{\eta_1})}^2\lVert\partial_t\eta_2\rVert_{W^{\frac{5}{3}, 2}(\omega)}\ds\\
&\lesssim \int_0^t\lVert\bu_1-\underline\bu_2\rVert_{W^{\frac{2}{3}, 2}(\Omega_{\eta_1})}^2\lVert\partial_t\eta_2\rVert_{W^{\frac{5}{3}, 2}(\omega)}\ds\\
&\lesssim \int_0^t\lVert\bu_1-\underline\bu_2\rVert_{L^2(\Omega_{\eta_1})}^{\frac{2}{3}}\lVert\nabla(\bu_1-\underline\bu_2)\rVert_{L^2(\Omega_{\eta_1})}^{\frac{4}{3}}\lVert\partial_t\eta_2\rVert_{W^{1,2}(\omega)}^{\frac{1}{3}}\lVert\partial_t\eta_2\rVert_{W^{2,2}(\omega)}^{\frac{2}{3}}\ds\\
&\leq \delta\int_0^t\lVert\nabla(\bu_1-\underline\bu_2)\rVert_{L^2(\Omega_{\eta_1})}^2\ds+C(\delta)\int_0^t\lVert\partial_t\eta_2\rVert_{W^{2,2}(\omega)}^2\lVert\bu_1-\underline\bu_2\rVert_{L^2(\Omega_{\eta_1})}^2\ds,
\end{aligned}
\end{equation*}
where we used the embedding $W^{\frac{5}{3},2}(\partial\Omega_{\eta_1})\hookrightarrow L^\infty(\partial\Omega_{\eta_1})$ and the interpolation inequalities:
\begin{equation*}
\begin{aligned}
\lVert\bu_1-\underline\bu_2\rVert_{W^{\frac{2}{3}, 2}(\Omega_{\eta_1})}&\lesssim \lVert \bu_1-\underline\bu_2\rVert_{L^2(\Omega_{\eta_1})}^{\frac{1}{3}}\lVert\bu_1-\underline\bu_2\rVert_{W^{1,2}(\Omega_{\eta_1})}^{\frac{2}{3}},\\
\lVert\partial_t\eta_2\rVert_{W^{\frac{5}{3}, 2}(\omega)}&\lesssim \lVert\partial_t\eta_2\rVert_{W^{1,2}(\omega)}^{\frac{1}{3}}\lVert\partial_t\eta_2\rVert_{W^{2,2}(\omega)}^{\frac{2}{3}}.
\end{aligned}
\end{equation*}
We also have the estimate for $\mathrm{III}$:
\begin{align*}
&\int_0^t\int_{\partial\Omega_{\eta_1}}\bfn\circ\bfvarphi_{\eta_1}^{-1}\cdot~ \mathrm{III}~\bfn_{\eta_1}\circ\bfvarphi_{\eta_1}^{-1}\dd \mathcal{H}^2\ds\\
&\leq \int_0^t\lVert\bu_1-\underline\bu_2\rVert_{L^2(\partial\Omega_{\eta_1})}\lVert\underline\bu_2\rVert_{L^\infty(\partial\Omega_{\eta_1})}\lVert(\partial_t\eta_1-\partial_t\eta_2)\bfn_{\eta_1}\circ\bfvarphi_{\eta_1}^{-1}\rVert_{L^2(\partial\Omega_{\eta_1})}\ds\\
&\lesssim\int_0^t\lVert\bu_1-\underline\bu_2\rVert_{W^{\frac{1}{2}, 2}(\partial\Omega_{\eta_1})}\lVert\underline\bu_2\rVert_{W^{\frac{3}{2}, 2}(\partial\Omega_{\eta_1})}\lVert\partial_t\eta_1-\partial_t\eta_2\rVert_{L^2(\omega)}\ds\\
&\lesssim \int_0^t\lVert\bu_1-\underline\bu_2\rVert_{W^{1,2}(\Omega_{\eta_1})}\lVert\underline\bu_2\rVert_{W^{2,2}(\Omega_{\eta_1})}\lVert\partial_t\eta_1-\partial_t\eta_2\rVert_{L^2(\omega)}\ds\\
&\leq \delta\int_0^t\lVert\bu_1-\underline\bu_2\rVert_{W^{1,2}(\Omega_{\eta_1})}^2\ds+C(\delta)\int_0^t\lVert\underline\bu_2\rVert_{W^{2,2}(\Omega_{\eta_1})}^2\lVert\partial_t\eta_1-\partial_t\eta_2\rVert_{L^2(\omega)}^2\ds.
\end{align*}
Putting all the estimates together and taking the supremum with respect to time on  both sides of \eqref{mainineq}, we obtain from Gr\"onwall's lemma that for every $T>0$, \eqref{final} holds.
\end{proof}

\begin{remark}{\rm
The estimate from Theorem \ref{thm:weakstrong} also applies when the forcing in the momentum equation is in divergence form, that is
 \begin{equation*}
\left\{\begin{aligned}
& \partial_t^2\eta -\partial_t\Dely \eta + \Dely^2\eta=g-\bn^\intercal\big(\bm{\tau}+\mathbf F\big)\circ\bm{\varphi}_\eta\bn_\eta
 \vert \mathrm{det}(\naby \bm{\varphi}_\eta)\vert
&\text{ for all }  (t,\by)\in I\times\omega,\\
 &\partial_t \bu  + (\mathbf{v}\cdot \nabx)\mathbf{v} 
 = 
 \Delx \bu -\nabx\pi+ \Div\bfF &\text{ for all }(t,\bx)\in I\times\Omega_\eta,\\
 &\Div \bu=0&\text{ for all }(t,\bx)\in I \times\Omega_\eta,
\end{aligned}\right.
\end{equation*}
for some $\mathbf F:I\times\Omega_\eta\rightarrow\R^{3\times 3}$.
In this case we obtain the estimate
\begin{align}
\nonumber
&\sup_{t\in I}\int_{\Omega_{\eta_1(t)}}|\bu_1(t)-\underline{\bu}_2(t)|^2\dx+\sup_{t\in I}\int_\omega\big(|\partial_t(\eta_1-\eta_2)(t)|^2
+
|\Dely(\eta_1-\eta_2)(t)|^2\big)\dy
\\
\nonumber&\qquad\qquad\qquad\qquad+\int_I
\int_{\Omega_{\eta_1(\sigma)}}|\nabla(\bu_1-\underline{\bu}_2)|^2\dx\dt 
+
\int_I\int_\omega|\partial_t\nabla_{\by}(\eta_1-\eta_2)|^2\dy\dt\\
\nonumber&\lesssim \int_{\Omega_{\eta_1(0)}}|\bu_1(0)-\underline{\bu}_2(0)|^2\dx+\int_\omega|\partial_t(\eta_1-\eta_2)(0)|^2\dy+\int_\omega|\Dely(\eta_1-\eta_2)(0)|^2\dy\\
&\qquad\qquad\qquad\qquad +\int_I\int_{\Omega_{\eta_1}}| \mathbf F_1-\underline {\mathbf F}_2|^2\dx\dt+\int_I\int_\omega| g_1-g_2|^2\dy\dt,\nonumber
\end{align}
where $\underline{\mathbf F}_2:=\bfF_2\circ \bfPsi_{\eta_2-\eta_1}$.}
\end{remark}

\section{The main result}\label{summary}
In the following, we formulate the desired conditional regularity and uniqueness result for \eqref{1}--\eqref{interfaceCond}, which implies Theorem~\ref{thm:mainsimple} and Corollary~\ref{cor:mainsimple}.
Its proof follows by combining
Theorems \ref{thm:fluidStructureWithoutFK},  \ref{prop2} and \ref{thm:weakstrong}.
\begin{theorem}\label{thm:main}
Let $T>0$ be given. Suppose that the dataset
$(\bff, g, \eta_0, \eta_*, \bu_0)$
satisfies \eqref{dataset} and
\eqref{datasetImproved}.
Let $(\bu,\eta)$ be a weak solution of \eqref{1}--\eqref{interfaceCond} in the sense of Definition \ref{def:weakSolution}. Suppose that we
have
\begin{align}\label{eq:regu'}
\bu&\in L^r(I;L^s(\Omega_\eta)),\quad \tfrac{2}{r}+\tfrac{3}{s}\leq1,\\
\eta&\in L^\infty(I;C^{1}(\omega)).\label{eq:regeta''}
\end{align}
Then $(\bu,\eta)$ is a strong solution in the sense of Definition \ref{def:strongSolution} on $I = (0, t)$, where $t < T$ only in case $\Omega_{\eta(s)}$ approaches a self-intersection when $s\rightarrow t$ or it degenerates\footnote{Self-intersection and degeneracy are excluded if $\sup_t\|\eta\|_{W^{1,\infty}_{y}}<L$, cf. \eqref{eq:boundary1} and \eqref{eq:1705}.} (namely, if $\displaystyle\lim_{s\rightarrow t}(\partial_1\bfvarphi_\eta\times \partial_2\bfvarphi_\eta)(s,\by)=0$ or $\displaystyle\lim_{s\rightarrow t}\bfn(\by)\cdot\bfn_{\eta(s)}(\by)=0$ for some $\by\in\omega$). 
Moreover, $(\bu,\eta)$  is unique in the class of weak solutions with deformation in $L^\infty(I,C^{0,1}(\omega))$.
\end{theorem}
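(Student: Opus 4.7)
The plan is to combine the three main technical theorems of the paper. Starting with the given weak solution $(\bu,\eta)$ and its prescribed initial data $(\bu_0,\eta_0,\eta_*)$ satisfying \eqref{datasetImproved}, I would first invoke Theorem~\ref{thm:fluidStructureWithoutFK} to obtain a local strong solution $(\tilde\bu,\tilde\eta,\tilde\pi)$ on some interval $(0,T^*)$ with the \emph{same} initial data. Since $(\tilde\bu,\tilde\eta)$ is a strong solution, it is in particular a weak solution satisfying the energy \emph{equality}, and by assumption it remains in $L^\infty(0,T^*;C^{0,1}(\omega))$. Thus Theorem~\ref{thm:weakstrong} applies to the pair $((\bu,\eta),(\tilde\bu,\tilde\eta))$: the initial-data terms vanish and so does the forcing-difference term, yielding $(\bu,\eta)=(\tilde\bu,\tilde\eta)$ on $(0,T^*)$. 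Consequently $(\bu,\eta)$ itself is a strong solution on $(0,T^*)$.

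Next, I would use the acceleration estimate to propagate strong regularity globally in time. Because $(\bu,\eta)=(\tilde\bu,\tilde\eta)$ is a strong solution on $(0,T^*)$ and satisfies \eqref{eq:regu'}--\eqref{eq:regeta''} globally on $I$ by hypothesis, Theorem~\ref{prop2} applies on $(0,T^*)$ and produces a bound on the strong norm
\begin{equation*}
\sup_{(0,T^*)}\!\!\Big(\|\nabla\bu\|_{L^2_\bx}^2+\|\partial_t\naby\eta\|_{L^2_\by}^2+\|\naby\Dely\eta\|_{L^2_\by}^2\Big)+\int_0^{T^*}\!\!\Big(\|\nabla^2\bu\|_{L^2_\bx}^2+\|\partial_t\bu\|_{L^2_\bx}^2+\|\partial_t^2\eta\|_{L^2_\by}^2\Big)\dt
\end{equation*}
that depends only on the data together with $C_0,C_1,C_2$ from \eqref{eq:regu}--\eqref{eq:regeta}, and in particular is independent of $T^*$. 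This uniform bound guarantees in particular that $(\bu(T^*),\eta(T^*),\partial_t\eta(T^*))$ inherits the regularity demanded by \eqref{datasetImproved}. I can then restart Theorem~\ref{thm:fluidStructureWithoutFK} at time $T^*$ to extend the strong solution by a further step whose length depends only on the (now a priori controlled) data norms, and again use weak-strong uniqueness to identify the extension with $(\bu,\eta)$. Iterating, the strong solution exists on the maximal interval $(0,t)$ on which the geometry does not degenerate in the sense of \eqref{eq:1705} or self-intersect; these are precisely the obstructions that prevent the Hanzawa transform and the Stokes estimate \eqref{eq:regstokes} from being usable, i.e.\ the only way the continuation scheme can fail.

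For uniqueness in the class of energy weak solutions with displacement in $L^\infty(I;C^{0,1}(\omega))$, I would let $(\bu_\sharp,\eta_\sharp)$ be any such competitor sharing the data, and apply Theorem~\ref{thm:weakstrong} with $(\bu_1,\eta_1)=(\bu_\sharp,\eta_\sharp)$ and $(\bu_2,\eta_2)=(\bu,\eta)$, the latter being the strong solution just constructed. The right-hand side of \eqref{final} vanishes, so $(\bu_\sharp,\eta_\sharp)=(\bu,\eta)$ on $(0,t)$.

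The main obstacle in carrying out this plan is the matching of the regularity thresholds at the restart step. Specifically, Theorem~\ref{thm:fluidStructureWithoutFK} requires data in $W^{3,2}(\omega)\times W^{1,2}(\omega)\times W^{1,2}(\Omega_{\eta_0})$, while the acceleration estimate \eqref{est:reg} only directly controls $\naby\Dely\eta$ and $\nabla\bu$ in $L^2$; one needs to check that the strong-solution class from Definition~\ref{def:strongSolution}, which embeds into $C(\overline I;W^{1,2})\cap C(\overline I;W^{3,2})$ via standard interpolation in Bochner spaces, indeed delivers pointwise-in-time slices with the regularity demanded by \eqref{datasetImproved}, uniformly along the continuation. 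This is a routine but non-trivial trace-in-time argument that has to be made explicit to ensure the continuation length at each restart does not shrink to zero.
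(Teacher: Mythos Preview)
Your proposal is correct and follows essentially the same approach as the paper: invoke Theorem~\ref{thm:fluidStructureWithoutFK} for a local strong solution, identify it with the given weak solution via Theorem~\ref{thm:weakstrong} (using \eqref{eq:regeta''} to supply the Lipschitz hypothesis on the \emph{weak} deformation $\eta_1=\eta$), feed the resulting Serrin bound into Theorem~\ref{prop2} to get a $T^*$-independent strong-norm estimate, and iterate. Your closing remarks on the trace-in-time regularity at restart and on the uniqueness application are more explicit than the paper's two-line argument but add nothing new.
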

\begin{proof}
Consider first the problem on the interval $(0,T^\ast)$ in which the strong solution exists by Theorem \ref{thm:fluidStructureWithoutFK}. On account of \eqref{eq:regeta''} Theorem \ref{thm:weakstrong} applies and thus both solutions coincide. Hence the strong solution satisfies \eqref{eq:regu'} (with a constant independent of $T^\ast$). Thus we obtain the estimate
from Theorem \ref{prop2}. Now we can apply Theorem \ref{thm:fluidStructureWithoutFK} to obtain a strong solution on the interval $(T^\ast,2T^\ast)$ with initial data $\bfu(T^\ast),\eta(T^\ast),\partial_t\eta(T^\ast)$.
This procedure can now be repeated until the moving boundary approaches a self-intersection or degenerates (that is $(\partial_1\bfvarphi_\eta\times \partial_2\bfvarphi_\eta)(T,\by)=0$ for some $\by\in\omega$).
\end{proof}

\section*{Acknowledgments}
S. Schwarzacher and P. Su are partially supported by the ERC-CZ Grant CONTACT LL2105 funded by the Ministry of Education, Youth and Sport of the Czech Republic and the University Centre UNCE/SCI/023 of Charles University. S. Schwarzacher also acknowledges the support of the VR
Grant 2022-03862 of the Swedish Science Foundation.

\section*{Compliance with Ethical Standards}
\smallskip
\par\noindent
{\bf Conflict of Interest}. The authors declare that they have no conflict of interest.

\smallskip
\par\noindent
{\bf Data Availability}. Data sharing is not applicable to this article as no datasets were generated or analysed during the current study.

\end{document}